\documentclass[reqno,final]{amsart}

\usepackage[T1]{fontenc}
\usepackage[utf8]{inputenc}
\usepackage{textcomp}
\usepackage{ebgaramond}

\usepackage{amsmath,amssymb,amsthm}
\usepackage{titlesec}
\usepackage{multirow}
\usepackage{caption}
\usepackage{subcaption}
\usepackage{longtable,booktabs,array,tabularx}
\usepackage{calc}
\usepackage{graphicx}
\usepackage[a4paper,top=1.75cm,bottom=1cm,left=1.3cm,right=1.3cm,head=.5cm,foot=.5cm,includeheadfoot]{geometry}
\usepackage{xcolor}
\usepackage{enumitem}
\usepackage{tikz}
\usetikzlibrary{arrows.meta,calc,positioning,fit,backgrounds,decorations.pathreplacing}

\IfFileExists{microtype.sty}{\usepackage{microtype}}{}
\IfFileExists{xurl.sty}{\usepackage{xurl}}{}
\usepackage{hyperref}
\IfFileExists{bookmark.sty}{\usepackage{bookmark}}{}

\hypersetup{
  hidelinks,
  pdftitle={The Awareness Logic of Ambiguity (ALA): Triadic Interpretive States and Their Structure-Preserving Operational Core},
  pdfauthor={Seyyed Ahmad Edalatpanah},
  pdfsubject={Awareness Logic of Ambiguity; triadic interpretive states; structure-preserving calculus},
  pdfkeywords={Awareness Logic of Ambiguity, ALA numbers, fuzzy sets, triadic interpretive states, contextual alignment, epistemic restraint, reasoning before projection},
  pdfcreator={LaTeX}
}
\titleformat{\section}
{\normalfont\fontsize{16}{24}\selectfont\bfseries}
{\thesection\textbar}{0.25em}{}
\titleformat{\subsection}
{\normalfont\fontsize{14}{24}\selectfont\bfseries}
{\thesubsection\textbar}{0.25em}{}
\titleformat{\subsubsection}
{\normalfont\fontsize{12}{18}\selectfont\bfseries}
{\thesubsubsection\textbar}{0.25em}{}
\titlespacing*{\section}{0pt}{2.5ex plus 1ex minus .2ex}{1.5ex plus .2ex}
\titlespacing*{\subsection}{0pt}{2.2ex plus 1ex minus .2ex}{1.1ex plus .2ex}
\titlespacing*{\subsubsection}{0pt}{1.8ex plus .8ex minus .2ex}{0.8ex plus .2ex}

\theoremstyle{definition}
\newtheorem{definition}{Definition}
\newtheorem{principle}{Principle}
\newtheorem{example}{Example}

\theoremstyle{plain}
\newtheorem{proposition}{Proposition}
\newtheorem{theorem}{Theorem}
\newtheorem{lemma}{Lemma}

\theoremstyle{remark}
\newtheorem{remark}{Remark}

\providecommand{\cA}{\mathcal{A}}
\providecommand{\cC}{\mathcal{C}}
\providecommand{\cP}{\mathcal{P}}
\providecommand{\cPhi}{\Phi}
\providecommand{\preceqA}{\preceq_{\mathcal A}}
\providecommand{\precA}{\prec_{\mathcal A}}
\providecommand{\parallelA}{\parallel_{\mathcal A}}

\providecommand{\meetA}{\wedge_{\mathcal A}}
\providecommand{\joinA}{\vee_{\mathcal A}}

\definecolor{Ctheta}{RGB}{31,119,180}
\definecolor{Csigma}{RGB}{44,160,44}
\definecolor{Cgamma}{RGB}{214,39,40}
\definecolor{Cbad}{RGB}{200,30,30}
\definecolor{Cok}{RGB}{30,130,70}
\definecolor{Ccube}{RGB}{55,55,55}
\definecolor{Cproj}{RGB}{0,110,170}
\definecolor{Cagg}{RGB}{0,120,120}
\definecolor{Cmeet}{RGB}{190,95,0}
\definecolor{Cjoin}{RGB}{110,40,160}
\definecolor{jfeagold}{RGB}{190,128,0}
\definecolor{jfeaabstract}{RGB}{242,242,242}

\providecommand{\Pp}[3]{({#1+0.45*#2},{#3+0.28*#2})}

\providecommand{\ALAUnitCubeFrame}{%
  \draw[Ccube,dashed,thin] \Pp{0}{1}{0} -- \Pp{0}{0}{0};
  \draw[Ccube,dashed,thin] \Pp{0}{1}{0} -- \Pp{1}{1}{0};
  \draw[Ccube,dashed,thin] \Pp{0}{1}{0} -- \Pp{0}{1}{1};
  \draw[Ccube,thin] \Pp{0}{0}{0} -- \Pp{1}{0}{0};
  \draw[Ccube,thin] \Pp{0}{0}{0} -- \Pp{0}{0}{1};
  \draw[Ccube,thin] \Pp{1}{0}{0} -- \Pp{1}{1}{0};
  \draw[Ccube,thin] \Pp{1}{0}{0} -- \Pp{1}{0}{1};
  \draw[Ccube,thin] \Pp{1}{1}{0} -- \Pp{1}{1}{1};
  \draw[Ccube,thin] \Pp{0}{0}{1} -- \Pp{1}{0}{1};
  \draw[Ccube,thin] \Pp{0}{0}{1} -- \Pp{0}{1}{1};
  \draw[Ccube,thin] \Pp{1}{0}{1} -- \Pp{1}{1}{1};
  \draw[Ccube,thin] \Pp{0}{1}{1} -- \Pp{1}{1}{1};
}

\providecommand{\ALAUnitCubeAxes}{%
  \draw[Ctheta,-{Latex[length=2mm]},thick]
       \Pp{0}{0}{0} -- \Pp{1.05}{0}{0}
       node[below right,black,font=\footnotesize]{$\theta$ valuation};
  \draw[Csigma,-{Latex[length=2mm]},thick]
       \Pp{1}{0}{0} -- \Pp{1}{1.07}{0}
       node[right,black,font=\footnotesize]{$\sigma$ contextual alignment};
  \draw[Cgamma,-{Latex[length=2mm]},thick]
       \Pp{0}{0}{0} -- \Pp{0}{0}{1.07}
       node[left,black,font=\footnotesize]{$\gamma$ epistemic restraint};
}

\begin{document}
\raggedbottom

\title[The Awareness Logic of Ambiguity (ALA)]{The Awareness Logic of Ambiguity (ALA): Triadic Interpretive States and Their Structure-Preserving Operational Core}

\author{Seyyed Ahmad Edalatpanah}
\address{Department of Applied Mathematics, Artificial Intelligence Research Center, Ayandegan University, Tonekabon, Iran}
\email{s.a.edalatpanah@aihe.ac.ir}
\email{saedalatpanah@gmail.com}
\thanks{ORCID: \url{https://orcid.org/0000-0001-9349-5695}.}
\thanks{Published in \emph{Journal of Fuzzy Extension and Applications}, 7(2) (2026), 661--702. DOI: \href{https://doi.org/10.22105/jfea.2026.582827.2294}{10.22105/jfea.2026.582827.2294}. The published article is distributed under the Creative Commons Attribution 4.0 International (CC BY 4.0) license.}
\date{}

\keywords{Awareness Logic of Ambiguity; ALA numbers; Fuzzy sets; Triadic interpretive states; Contextual alignment; Epistemic restraint; Reasoning before projection}

\begin{abstract}
Ambiguity is not a defect to be eliminated, but a profound human feature in the act of interpretation. At the boundary between interpretation and action, this feature takes a sharper form: knowing is not permission. A judgment may be well supported, contextually aligned, and held with high confidence while still being withheld from immediate action. Starting from this separation between support and permission, the paper introduces the Awareness Logic of Ambiguity (ALA) as a formal framework for preserving it before scalar projection.

An evaluative act is formalized as a triadic interpretive state preserving valuation, contextual alignment, and epistemic restraint as distinct roles of judgment. The framework does not reject scalar membership, but treats it as a delayed interface rather than the primitive form of judgment. To prevent interpretive compression, a cognitive order with reversed polarity for epistemic restraint is introduced and shown to induce a bounded distributive lattice. Weakly and strictly admissible scalar projections are developed as declared interfaces; the strict class satisfies boundary grounding and strict isotonicity and supports the non-collapse analysis. The non-collapse theorem then shows that projection-induced scalar equality may fracture under an ALA lattice operation. Thus, numerical equality does not imply operational equivalence, and the support--permission boundary is not, in general, natively preserved by scalar membership or by the surveyed membership-status and truth-status-centered representations.

The paper further establishes the conditional semantic minimality of the triad, develops a role-preserving arithmetic core, and derives canonical permissive and conservative aggregation operators that form a bounded aggregation envelope. The unit cube is identified as the normalized real-valued realization developed here, while the underlying role architecture admits, in principle, independently structured and even heterogeneous carriers for valuation, contextual alignment, and restraint. A high-stakes clinical scenario serves as an operational stress test: the same evidential support, contextual alignment, and auxiliary confidence may be associated with lower restraint toward follow-up and higher restraint toward invasive intervention. The resulting framework provides a self-contained calculus for preserving the semantic source of a judgment before it is deliberately reported through a scalar interface.
\end{abstract}

\maketitle

\section{Introduction}
\label{sec:introduction}
\phantomsection\label{subsec:knowing-is-not-permission}

A membership grade often appears as a simple number. An evaluator reports that an object belongs to a concept with grade \(0.70\), or a diagnostic system assigns a lesion a suspiciousness score of \(0.94\). Such numbers are useful because they make vague judgments computable. At the same time, they may conceal something essential: the conditions under which the judgment was formed, the context in which the value is meaningful, and the degree of restraint required before the value is allowed to guide action. Before a membership-like judgment is compressed into a scalar grade, one may ask a more fundamental question: what structure of interpretation produced this number?

This difficulty can be seen in a simple situation. Setareh tests bath water with her hand and forms the tactile judgment that the water is \emph{Comfortably Warm}. The predicate being evaluated is the perceived warmth of the water under Setareh's assessment, not the separate action-predicate that the water should be used by a particular person. For her own use, this judgment may be sufficient for immediate action. When the same water is considered for her young child, however, she pauses. The sensory evidence has not changed. The tactile assessment has not changed. The confidence she places in the judgment has not changed. What changes is the permission to operationalize the judgment.

The same structure appears in high-stakes clinical reasoning. A lesion may be strongly suspicious on imaging, the interpretive conditions may be adequate, and the clinician may be highly confident. Yet the same diagnostic judgment may warrant close monitoring, urgent follow-up, further testing, or multidisciplinary evaluation while remaining insufficient, by itself, to license immediate invasive intervention. The preserved predicate is diagnostic suspiciousness; it is neither the separate action-predicate that invasive intervention is justified nor the separate action-predicate that routine delay is safe. In such cases, the issue is not that the judgment is false, vague, unreliable, or unsupported. The issue is that evidential support and operational permission are different roles. In short, knowing is not permission to intervene, and uncertainty is not permission to delay. This is not an exceptional anomaly, but a normal condition of high-stakes reasoning under epistemic responsibility.

For more than half a century, beginning with Zadeh's seminal work on fuzzy sets, graded membership has provided a powerful mathematical language for representing vagueness and approximate reasoning \cite{Zadeh1965,Zadeh1975}. Subsequent developments have enriched this language in several directions: type-2 and interval-valued models address uncertainty in membership grades \cite{MendelJohn2002,Bustince2016}; intuitionistic, orthopair, hesitant, neutrosophic, and related extensions introduce additional membership-centered or truth-status components \cite{Atanassov1986,Yager2014,Yager2017,Cuong2014, KutluKahraman2019, Torra2010,Smarandache1998}; Z-number frameworks represent reliability-oriented qualifications \cite{Zadeh2011,Yager2012}; and linguistic, rough, granular, and three-way decision frameworks address perception-based reasoning, approximation, granulation, and decision-region trisection \cite{Zadeh1996,Zadeh1997,Pawlak1982,Yao2010,Pedrycz2013}. Yet, as a scalar interface, graded membership answers only half the question. It tells us how strongly a concept is supported, but not what interpretive conditions produced that support, nor how cautiously the resulting judgment should be allowed to guide action.

This paper proceeds in continuity with fuzzy set theory and 
its extensions, not in rejection of them. Scalar membership 
is a powerful and often indispensable interface for comparison, 
classification, ranking, and decision interfacing. The difficulty 
appears when the scalar interface is treated as the primitive 
form of judgment itself, since a value that is operationally 
convenient is not, by that fact alone, a complete representation 
of the judgment that produced it. The value is visible; the 
interpretive source of the value is not. This loss is called 
interpretive compression: the reduction of a typed evaluative 
act to a single numerical surface before the roles that gave 
the act its meaning have participated in reasoning. It may also 
obscure the traceability of judgment, since a scalar grade may 
conceal the evaluator's perspective, the adequacy of the setting, 
and the restraint under which the judgment should be used.

What is needed is not merely another extension of fuzzy membership calculus, but a fundamental rethinking of what belonging means in a world saturated with ambiguity. Ambiguity, on this view, is not a defect to be eliminated, but a profound human feature of the act of interpretation. The Awareness Logic of Ambiguity (ALA) is introduced at this point: not as another numerical refinement of membership, but as a structure-preserving framework for reasoning about valuation, contextual alignment, and epistemic restraint before scalar projection. ALA preserves not only the reported value, but also the conditions under which the value was produced.

Here, ``awareness'' does not refer to awareness operators in epistemic logic \cite{FaginHalpern1988,FaginHalpernMosesVardi1995}. It refers to preserving awareness of the interpretive structure that scalar membership may otherwise compress. The term ``logic'' is used in the sense of a formal interpretive system equipped with structured states, cognitive ordering, semantic operations, and admissible scalar interfaces. Ambiguity is understood as a structured interpretive condition, not merely as numerical imprecision.

Before projection, an ALA judgment is represented as
\[
A=(\theta,\sigma,\gamma)\in[0,1]^3,
\]
where \(\theta\) is interpretive valuation, \(\sigma\) is contextual alignment, and \(\gamma\) is epistemic restraint. The first coordinate records the content-directed strength of the judgment. The second records the degree to which the judgment is aligned with the context in which it is made. The third records the degree to which the evaluator resists premature operationalization of the judgment. The polarity of \(\gamma\) is intentionally reversed: larger restraint lowers cognitive permissiveness.

Returning to Setareh, for analytic clarity, two minimal contrasts may be separated. In the first contrast, the same tactile judgment is used under two operational interfaces:
\[
A_{\mathrm{adult}}=(0.70,0.90,0.15),
\qquad
A_{\mathrm{child}}=(0.70,0.90,0.80).
\]
The valuation and contextual alignment are held fixed; only epistemic restraint differs. In the second contrast, the same tactile judgment is formed under two contextual settings, first in Setareh's familiar home bathroom and then in an unfamiliar hotel during family travel:
\[
A_{\mathrm{home}}=(0.70,0.90,0.15),
\qquad
A_{\mathrm{hotel}}=(0.70,0.35,0.15).
\]
Here valuation and restraint are held fixed, while contextual alignment differs. In real situations, contextual weakening and increased restraint may occur together; the two contrasts are separated here only to expose the distinct roles of \(\sigma\) and \(\gamma\). These minimal contrasts show how the same reported scalar membership grade may hide distinct interpretive sources: a difference in operational permission, represented by \(\gamma\), or a difference in contextual alignment, represented by \(\sigma\). 

\begin{figure}[!htbp]
\centering
\resizebox{0.94\textwidth}{!}{%
\begin{tikzpicture}[font=\scriptsize,>=Stealth]

\begin{scope}[shift={(0,0)}]
\draw[gray!35, fill=gray!4, rounded corners] (0,0) rectangle (6.8,5.15);

\node[font=\footnotesize\bfseries, align=center] at (3.4,4.82)
{Panel A};

\node[font=\scriptsize, align=center] at (3.4,4.52)
{\(\theta,\sigma\) fixed; \(\gamma\) varies};

\node[font=\footnotesize\bfseries] at (2.0,4.02) {Own use};

\node[font=\tiny] at (1.45,3.68) {$\theta$};
\draw[gray!55] (1.25,2.05) rectangle (1.65,3.55);
\fill[blue!38] (1.25,2.05) rectangle (1.65,{2.05+1.50*0.70});
\node[font=\tiny] at (1.45,1.78) {0.70};

\node[font=\tiny] at (2.00,3.68) {$\sigma$};
\draw[gray!55] (1.80,2.05) rectangle (2.20,3.55);
\fill[green!38] (1.80,2.05) rectangle (2.20,{2.05+1.50*0.90});
\node[font=\tiny] at (2.00,1.78) {0.90};

\node[font=\tiny] at (2.55,3.68) {$\gamma$};
\draw[gray!55] (2.35,2.05) rectangle (2.75,3.55);
\fill[red!35] (2.35,2.05) rectangle (2.75,{2.05+1.50*0.15});
\node[font=\tiny] at (2.55,1.78) {0.15};

\node[font=\footnotesize\bfseries] at (4.85,4.02) {Child care};

\node[font=\tiny] at (4.30,3.68) {$\theta$};
\draw[gray!55] (4.10,2.05) rectangle (4.50,3.55);
\fill[blue!38] (4.10,2.05) rectangle (4.50,{2.05+1.50*0.70});
\node[font=\tiny] at (4.30,1.78) {0.70};

\node[font=\tiny] at (4.85,3.68) {$\sigma$};
\draw[gray!55] (4.65,2.05) rectangle (5.05,3.55);
\fill[green!38] (4.65,2.05) rectangle (5.05,{2.05+1.50*0.90});
\node[font=\tiny] at (4.85,1.78) {0.90};

\node[font=\tiny] at (5.40,3.68) {$\gamma$};
\draw[gray!55] (5.20,2.05) rectangle (5.60,3.55);
\fill[red!35] (5.20,2.05) rectangle (5.60,{2.05+1.50*0.80});
\node[font=\tiny] at (5.40,1.78) {0.80};

\draw[->,thick] (1.25,1.02) -- (5.60,1.02);
\foreach \x/\lab in {1.25/0,2.55/0.3,4.20/0.7,5.60/1} {
  \draw (\x,0.92)--(\x,1.12);
  \node[font=\tiny] at (\x,0.65) {\lab};
}
\fill[orange!85] (4.20,1.02) circle (2.2pt);
\node[font=\tiny] at (4.20,0.35) {$\mu=0.70$};

\end{scope}

\begin{scope}[shift={(7.35,0)}]
\draw[gray!35, fill=gray!4, rounded corners] (0,0) rectangle (6.8,5.15);

\node[font=\footnotesize\bfseries, align=center] at (3.4,4.82)
{Panel B};

\node[font=\scriptsize, align=center] at (3.4,4.52)
{\(\theta,\gamma\) fixed; \(\sigma\) varies};

\node[font=\footnotesize\bfseries] at (2.0,4.02) {Home};

\node[font=\tiny] at (1.45,3.68) {$\theta$};
\draw[gray!55] (1.25,2.05) rectangle (1.65,3.55);
\fill[blue!38] (1.25,2.05) rectangle (1.65,{2.05+1.50*0.70});
\node[font=\tiny] at (1.45,1.78) {0.70};

\node[font=\tiny] at (2.00,3.68) {$\sigma$};
\draw[gray!55] (1.80,2.05) rectangle (2.20,3.55);
\fill[green!38] (1.80,2.05) rectangle (2.20,{2.05+1.50*0.90});
\node[font=\tiny] at (2.00,1.78) {0.90};

\node[font=\tiny] at (2.55,3.68) {$\gamma$};
\draw[gray!55] (2.35,2.05) rectangle (2.75,3.55);
\fill[red!35] (2.35,2.05) rectangle (2.75,{2.05+1.50*0.15});
\node[font=\tiny] at (2.55,1.78) {0.15};

\node[font=\footnotesize\bfseries] at (4.85,4.02) {Hotel};

\node[font=\tiny] at (4.30,3.68) {$\theta$};
\draw[gray!55] (4.10,2.05) rectangle (4.50,3.55);
\fill[blue!38] (4.10,2.05) rectangle (4.50,{2.05+1.50*0.70});
\node[font=\tiny] at (4.30,1.78) {0.70};

\node[font=\tiny] at (4.85,3.68) {$\sigma$};
\draw[gray!55] (4.65,2.05) rectangle (5.05,3.55);
\fill[green!38] (4.65,2.05) rectangle (5.05,{2.05+1.50*0.35});
\node[font=\tiny] at (4.85,1.78) {0.35};

\node[font=\tiny] at (5.40,3.68) {$\gamma$};
\draw[gray!55] (5.20,2.05) rectangle (5.60,3.55);
\fill[red!35] (5.20,2.05) rectangle (5.60,{2.05+1.50*0.15});
\node[font=\tiny] at (5.40,1.78) {0.15};

\draw[->,thick] (1.25,1.02) -- (5.60,1.02);
\foreach \x/\lab in {1.25/0,2.55/0.3,4.20/0.7,5.60/1} {
  \draw (\x,0.92)--(\x,1.12);
  \node[font=\tiny] at (\x,0.65) {\lab};
}
\fill[orange!85] (4.20,1.02) circle (2.2pt);
\node[font=\tiny] at (4.20,0.35) {$\mu=0.70$};

\end{scope}

\end{tikzpicture}%
}
\caption{\textbf{Two forms of scalar compression under a scalar readout.}
Panel A isolates variation in epistemic restraint, while Panel B isolates
variation in contextual alignment. In both cases, the same scalar value
hides a role-level distinction preserved by the ALA state. The displayed
scalar is the naive valuation readout \(\mu=\theta\), which Section~5 shows
is not an admissible projection of the full state.}
\label{fig:scalar-compression}
\end{figure}

Fig.~\ref{fig:scalar-compression} separates two elementary forms of interpretive compression. In Panel A, the scalar readout remains fixed while epistemic restraint changes. In Panel B, the same scalar readout is preserved while contextual alignment changes. The two panels are separated only for analytic clarity.


The technical contribution of this paper is built around one aim: to 
keep the semantic roles of judgment available until the point at 
which a scalar interface is deliberately imposed. At the 
representational level, the paper defines ALA-valued interpretive judgment assessment, introduces a cognitive order with reversed restraint 
polarity, and proves that the resulting structure is a bounded 
distributive lattice. At the operational level, it develops 
admissible scalar projections as delayed interfaces, establishes 
the non-collapse theorem, justifies the semantic necessity of the 
triad, constructs a role-preserving arithmetic core, and derives 
an aggregation envelope from permissive and conservative aggregation 
operators.
The thesis is therefore not that a better scalar should be reported, but that the typed roles producing the judgment, namely valuation, contextual alignment, and epistemic restraint, must remain visible before projection compresses them into a scalar interface, and in particular, before support is mistaken for permission.

The remainder of the paper is organized as follows. Section~2 positions ALA within the related literature and clarifies its modeling distinction from representative fuzzy, uncertainty-based, linguistic, and decision-theoretic frameworks. Section~3 defines the normalized unit-cube realization, the operational semantics of its coordinates, the carrier-independent role architecture that it instantiates, and the admissible elicitation discipline that protects the separation between semantic roles and measurement routes. Section~4 develops the cognitive order, lattice structure, and role-preserving complement. Section~5 introduces rolewise standardization and scalar projection as distinct delayed interfaces. Section~6 establishes projection equivalence and the non-collapse result. Section~7 justifies the structural necessity of the triad while separating semantic triadicity from the internal complexity of role carriers. Sections~8 and~9 develop the role-preserving arithmetic and aggregation layers of the standard realization and state the role-local principles governing heterogeneous extensions. Section~10 presents a high-stakes clinical stress test showing how the same evidential support, contextual alignment, and auxiliary confidence may lead to different operational commitment interfaces. Sections~11 and~12 discuss the broader implications of the framework and conclude the paper. Additional diagnostic details on role-separated elicitation, projection behavior, and coordinate deletion are collected in Appendix~A.


\section{Related Work and Motivation}
\label{sec:related-work}
\label{sec:context-not-enough}
\label{subsec:common-core}
\label{subsec:carrier-honesty}
\label{subsec:per-framework}

Section~\ref{sec:introduction} positioned ALA within the broader development of fuzzy set theory and its extensions. This section states what ALA contributes and where each neighboring framework absorbs a role that ALA preserves. The organizing principle is \emph{non-absorptive role preservation}: an evaluative judgment is represented so that valuation, contextual alignment, and epistemic restraint remain separately visible, none of them silently absorbed into another and none collapsed into a single grade at the moment the assessment is formed. Scalar, linguistic, or decision outputs remain useful, but as interfaces applied to a preserved state rather than as substitutes for it. Every comparison below is therefore read through one diagnostic question: which role is absorbed, and where?

Existing frameworks have enriched membership grades, qualified information by reliability, represented indeterminacy, approximated concepts, parameterized descriptions, granulated information, and partitioned decision spaces into action regions. ALA addresses a different representational target: the internal structure of the evaluative act itself, with the third role carrying reversed polarity, so that restraint enters the order and the admissible scalar interfaces through \(1-\gamma\) rather than as an ordinary positive contribution.

A single state makes the distinction visible:
\[
A=(0.94,\,0.90,\,0.95).
\]
In ALA this state is not contradictory. The judgment has strong predicate-directed valuation and strong contextual alignment, while also carrying strong restraint against operational commitment. The proposition is not less supported, more false, or more indeterminate; it is more restrained. Encoding this situation by lowering membership, increasing indeterminacy, increasing falsity, or adding a generic reliability qualifier would change the meaning of the third role. That is precisely the absorption that ALA refuses. The same diagnostic point appears in the three states
\[
A_1=(0.35,\,0.90,\,0.20),\qquad
A_2=(0.90,\,0.35,\,0.20),\qquad
A_3=(0.90,\,0.90,\,0.95).
\]
All three may be compressed into a cautious or borderline scalar report, yet their sources differ: weak valuation in \(A_1\), weak contextual alignment in \(A_2\), and strong epistemic restraint despite strong support and alignment in \(A_3\). ALA keeps these diagnoses distinct before any scalar interface is applied.

The claim does not rest on the mere use of a three-dimensional carrier. As an ordered set, the state space is the cube \([0,1]^3\) with a role-dependent orientation, and many operations act coordinatewise. What is contributed is semantic and operational: non-interchangeable roles, reversed restraint polarity, role-preserving operations, and delayed projection. Coordinatewise operations are the operational form of role preservation, not a device for hiding semantic interaction.

The most immediate comparison is with context-dependent and context-adapted fuzzy membership, in which a contextual condition \(c\) calibrates, transforms, or selects a scalar grade through nonlinear transformations \cite{PedryczGudwinGomide1997}, evolutionary optimization \cite{GudwinGomidePedrycz1998,Botta2009}, dynamic parameter adjustment \cite{Magdalena2002,HoGaribaldi2014}, or information granulation \cite{Zadeh1997,Yao2001}. Here \(\sigma\) is neither the context variable \(c\) nor a situational label: a context descriptor records where a judgment was formed, whereas contextual alignment evaluates whether those conditions are adequate for the valuation being formed. Storing \(c\) as metadata beside the grade leaves it external to the algebra of judgment; in ALA, \(\sigma\) is an endogenous coordinate that participates in the cognitive order, the lattice, projection, arithmetic, and aggregation. A framework that genuinely reproduces this behavior has already adopted the central representational commitment of ALA rather than replaced it.

From an order-theoretic viewpoint, ALA lies near \(L\)-fuzzy and lattice-valued membership \cite{Goguen1967}. This proximity concerns representability, not semantic reduction. If the three role carriers are lattices, the typed product \(L_\theta\times L_\sigma\times L_\gamma^{\mathrm{op}}\) is itself a lattice and may therefore be encoded as an \(L\)-valued carrier. An arbitrary \(L\)-valued representation, however, does not by itself preserve the constitutive factorization into valuation, contextual alignment, and epistemic restraint, the reversed orientation of restraint, the role-local calculi, or the discipline of delayed projection. ALA may admit an \(L\)-valued realization, but it is not semantically reducible to generic \(L\)-fuzzy membership: the non-erasable contribution is the role typing of the carrier and its operations, not carrier richness alone.

Multi-component fuzzy extensions raise the broadest reduction risk. Intuitionistic \cite{Atanassov1986,DuboisEtAl2005}, orthopair \cite{Yager2014,Yager2017}, picture \cite{Cuong2014}, spherical \cite{KutluKahraman2019}, hesitant \cite{Torra2010}, neutrosophic \cite{Smarandache1998}, and related frameworks add membership-centered, hesitant, neutral, refusal, or truth-status components, all of which remain tied to the status of membership or truth. The decisive distinction is the polarity signature of \(\gamma\): increasing it does not increase falsity, indeterminacy, hesitation, refusal, or non-membership; it decreases operational permissiveness, which is why high support and high restraint coexist without contradiction.

Fuzzy multisets \cite{Yager1986Bags}, shadowed sets \cite{Pedrycz1998Shadowed}, and D-numbers \cite{Deng2012DNumbers} expand multiplicity structures, approximation regions, and evidence representations along axes different from the target of ALA. Plithogenic sets \cite{Smarandache2017Plithogenic} come closer by combining attribute-based appurtenance with contradiction-guided aggregation, but the contradiction degree steers the aggregation of attribute values; it does not preserve the typed roles of a single evaluative act before projection.

A more delicate comparison arises with Z-numbers. A Z-number is commonly understood as an ordered pair \(Z=(A,B)\), where \(A\) is a fuzzy restriction and \(B\) expresses the reliability, certainty, or sureness attached to that restriction \cite{Zadeh2011,Yager2012}. Reliability qualifies asserted information, whereas contextual alignment qualifies the fit between a judgment and the interpretive setting in which it is formed and assessed. A reliable source may judge under poorly aligned conditions, and a reliable restriction issued out of domain is not the same as a well-aligned judgment. If contextual adequacy is encoded by lowering the restriction, it is absorbed into valuation; if it is encoded by lowering reliability, it is reassigned to source or evidential trustworthiness. In neither case is \(\sigma\) natively preserved as a coordinate of the state. The reversed-polarity coordinate \(\gamma\) is equally distinct from \(B\), and along a further axis. Reliability is backward-looking, grading the evidence a judgment already rests on, while epistemic restraint is forward-looking, grading the caution required before the judgment is operationalized under a given commitment interface. This is why a Z-number cannot represent an ALA state by relabeling. A judgment may carry high reliability and high restraint at once, resting on excellent evidence yet retaining strong reason to withhold action, a combination that is contradictory only if support and permission are conflated.

Three-way decision trisects a decision space into acceptance, rejection, and deferment regions through thresholds, losses, or sequential information acquisition \cite{Yao2010,YaoTWD2012,YaoIJAR2024}. The ALA triad is different in kind: its coordinates are co-present roles inside one evaluative state, not mutually exclusive outcomes, and any assert, reject, or withhold interface is downstream of the preserved state. Three-way decision identifies noncommitment as a region; ALA diagnoses its semantic source, which may be weak valuation, weak contextual alignment, strong epistemic restraint, or projection instability.

Computing with Words treats words, linguistic variables, and perceptions as computational objects \cite{Zadeh1975,Zadeh1996}. ALA shares the human-centered motivation but operates one layer below: a linguistic judgment such as ``highly suspicious'' may be meaningful and computationally well formed while remaining insufficient for operational commitment. Hedges modify meaning; restraint governs permission. A word is one reporting interface for a preserved evaluative state.

Finally, since ALA states have three coordinates and projections may use weights, the triad may be misread as three ordinary criteria in a weighted scoring model, as in multiple-attribute decision making (MADM) \cite{HwangYoon1981,Triantaphyllou2000}. MADM criteria are analyst-selected, problem-dependent, and fungible; the ALA coordinates are constitutive roles whose meanings and polarities are fixed by the framework. A MADM-like scalar score can therefore serve only as a downstream interface, after the role-typed state has been preserved.

Table~\ref{tab:ala_positioning} summarizes these comparisons as a map of representational targets, not a hierarchy of theories: the listed frameworks remain indispensable in their own domains, and the axis at issue is the non-absorptive preservation of judgment roles before projection.

\begin{table}[!htbp]
\centering
\caption{Non-absorptive role positioning relative to adjacent frameworks.}
\label{tab:ala_positioning}
\fontsize{10}{12}\selectfont
\setlength{\tabcolsep}{2.4pt}
\renewcommand{\arraystretch}{1.16}
\begin{tabularx}{\textwidth}{@{}>{\raggedright\arraybackslash}p{0.17\textwidth}
>{\raggedright\arraybackslash}p{0.22\textwidth}
>{\raggedright\arraybackslash}p{0.24\textwidth}
>{\raggedright\arraybackslash}X@{}}
\toprule
\textbf{Framework} &
\textbf{Native target} &
\textbf{Absorption risk} &
\textbf{Role-preservation distinction} \\
\midrule
Context-adapted fuzzy membership & Context-sensitive grade \(\mu_A(x;c)\) & Contextual weakness may be absorbed into the reported scalar & \(\sigma\) is an endogenous, order-bearing alignment coordinate, not the context variable, metadata, or a context-adjusted grade. \\
\(L\)-fuzzy and lattice-valued membership & Ordered carrier for membership values & Rich carriers need not impose typed judgment roles & Semantic typing is fixed by valuation, contextual alignment, and reversed restraint polarity, not by carrier richness alone. \\
Multi-component fuzzy extensions & Membership, non-membership, hesitation, neutrality, refusal, indeterminacy, or truth-status components & Judgment roles may be recoded as membership-status or truth-status components & The coordinates are heterogeneous judgment roles; \(\gamma\) has inverse operational polarity rather than hesitation, indeterminacy, falsity, refusal, or non-membership. \\
Carrier- and aggregation-enriching extensions & Multiplicity, shadow regions, incomplete evidence, or attribute-driven aggregation & Role structure may be replaced by a richer carrier or aggregation mechanism & Valuation, contextual alignment, and restraint are preserved before projection or aggregation. \\
Z-numbers & Fuzzy restriction plus reliability qualification & Contextual alignment may be absorbed into restriction or reliability & Reliability is separated from contextual alignment, and restraint is separated from reliability. \\
Three-way decision & Acceptance, rejection, and deferment regions & The source of noncommitment may be absorbed into a decision region & The evaluative state is decomposed before any decision-region interface is imposed. \\
Computing with Words & Words, linguistic variables, and perceptions as computational objects & Judgment structure may be absorbed into a linguistic label & Words are reporting interfaces; the preserved state remains behind the linguistic label. \\
Decision-scoring interfaces & Criteria or components aggregated for ranking, choice, or reporting & Semantic roles may be treated as fungible weighted criteria & Roles are constitutive and polarity-fixed; projection is delayed and declared. \\
\bottomrule
\end{tabularx}
\end{table}

Three adjacent traditions deserve brief mention, since their vocabulary overlaps with that of ALA while their targets differ. Possibility theory pairs an event with possibility and necessity degrees \cite{DuboisPrade1988}; both degrees qualify the epistemic standing of a proposition, whereas epistemic restraint qualifies the permission to operationalize a judgment whose epistemic standing may already be excellent. Bipolar fuzzy sets attach a negative pole to membership \cite{ZhangBipolar1994}; the reversed polarity of \(\gamma\) is not a negative valuation but a brake on commitment, and it coexists with high positive valuation. Finally, because ALA speaks throughout of permission, deontic logic should be named explicitly: deontic operators formalize normative permission and obligation over propositions \cite{vonWright1951}, whereas \(\gamma\) is a graded coordinate internal to a single evaluative state. An ALA state may inform a deontic conclusion, but it is not itself a deontic formula.

The distinction has a diagnostic reading. A noisy medical image is a problem of the interpretive setting; a lesion weakly compatible with the diagnostic predicate is a problem of valuation; a strongly suspicious lesion under adequate imaging, for which immediate invasive action is premature, is a matter of epistemic restraint. One scalar grade erases this diagnostic direction, and with it the audit trail of judgment formation: the same value may hide different reasons for caution, different sources of weakness, and different routes for revision. What ALA provides is a structure in which the evaluator exposes the alignment and restraint under which a reported degree is used. Scalar projection is accordingly treated as a delayed interface, developed in Section~5 and stress-tested by the non-collapse result of Section~6.

ALA does not reject the traditions compared above; it claims that a membership-like judgment should preserve the semantic source of its value before that value is reduced to a scalar, linguistic, or decision interface. Its novelty lies in non-absorptive role preservation: the semantic typing of the coordinates, the internalization of contextual alignment, the reversed operational polarity of epistemic restraint, and the delayed-projection principle. This is the structural difference between a context-adjusted scalar grade and a triadic interpretive state.


\section{ALA State Space and Operational Semantics}
\label{sec:ala-state-space}

This section gives the formal state space and operational semantics of ALA. Section~\ref{sec:context-not-enough} showed why context dependence, by itself, does not solve interpretive compression. The present section specifies what is preserved when a membership-like judgment is represented as an ALA state. The central point is that the three coordinates of an ALA state are typed rather than interchangeable. It is a typed interpretive state whose coordinates play different roles before any scalar projection is applied.

\subsection{ALA-valued Interpretive Judgment Assessment}
Let \(X\) be a universe of discourse, let \(\cC\) denote a set of admissible contexts, and let \(\cP\) denote a set of admissible perspectives. A context \(c\in\cC\) records the situational, evidential, linguistic, or institutional conditions in which an evaluation is formed. A perspective \(p\in\cP\) records the declared interpretive purpose, assessment protocol, disciplinary lens, or intended use under which the evaluation is issued. These inputs are not themselves ALA coordinates. Their influence is role typed: contextual adequacy belongs to \(\sigma\), whereas stakes, irreversibility, and the operational commitment interface belong to \(\gamma\). A change in \(p\) that changes only the intended commitment interface should therefore alter \(\gamma\), not be silently absorbed into \(\theta\) or \(\sigma\).

\begin{definition}[ALA-valued interpretive judgment assessment]
\label{def:ala-valued}
Let \(A\) be a concept over \(X\). An ALA-valued interpretive judgment assessment associated with \(A\) is a mapping
\[
\mu_A^{\mathrm{ALA}}:X\times\cC\times\cP\to\cA,
\qquad
\cA=[0,1]^3,
\]
of the form
\begin{equation}
\mu_A^{\mathrm{ALA}}(x,c,p)
=
\bigl(\theta_A(x,c,p),\sigma_A(x,c,p),\gamma_A(x,c,p)\bigr),
\label{eq:ala-valued-output}
\end{equation}
where
\[
\theta_A,\sigma_A,\gamma_A:X\times\cC\times\cP\to[0,1].
\]
The three component maps are \emph{role-transparent}. The coordinate \(\theta_A\) returns the interpretive valuation of \(x\) with respect to the concept \(A\); the coordinate \(\sigma_A\) returns the contextual alignment under which the valuation is formed and assessed; and the coordinate \(\gamma_A\) returns the epistemic restraint associated with carrying the judgment toward operational use. The general dependence of the three coordinates on \((x,c,p)\) does not make them interchangeable numerical attributes. Their identity is fixed by their role in the judgment: valuation, contextual alignment, and epistemic restraint. The output \(\mu_A^{\mathrm{ALA}}(x,c,p)\) is called an \emph{ALA state} associated with \(x\) under context \(c\) and perspective \(p\).
\end{definition}

\begin{remark}[Membership readouts and interpretive judgment states]
\label{rem:membership-readout-judgment-state}
An ALA state should not be read as a membership triple in the sense of a
multi-component membership-status representation. The coordinate
\(\theta_A\) is the valuation coordinate closest to classical scalar
membership, but the full ALA state is not exhausted by \(\theta_A\). Scalar
membership grades are therefore treated as possible readouts or interfaces
of a preserved interpretive judgment state, not as the primitive judgment
itself. This is consistent with the name ALA, Awareness Logic of Ambiguity:
the primitive object is an ambiguity-aware interpretive judgment state, while
membership-like scalar values are recovered only at the interface level.
\end{remark}

\begin{remark}[The epistemic-to-operational role of restraint]
\label{rem:epistemic-operational-restraint}
The term \emph{epistemic restraint} names the role that limits premature
cognitive permissiveness when a judgment is carried toward an operational
commitment interface. Thus the role is epistemic in its status within the
judgment state, while its visible effect is operational: it regulates the
permission to act on, report, escalate, or otherwise commit to the judgment.
This distinction prevents \(\gamma_A\) from being read as confidence,
reliability, hesitation, deferment, or non-membership. It is a
permission-limiting role internal to the interpretive judgment state.
\end{remark}

For fixed \((x,c,p)\), an ALA state is denoted by
\begin{equation}
\label{eq:ala-state}
A=(\theta_A,\sigma_A,\gamma_A)\in\cA.
\end{equation}
When no confusion can arise, the simpler coordinate notation \((\theta,\sigma,\gamma)\) is used. We also use the term \emph{ALA number} when the state is treated as an object of computation. This terminology does not remove the semantic typing of the coordinates. It only indicates that operations, orders, projections, and aggregations act on triadic interpretive states.

\begin{definition}[Role-factorized ALA assessment]
\label{def:role-factorized}
A role-factorized ALA assessment is an ALA-valued interpretive judgment assessment for which there exist component maps
\[
\theta_A^{\mathrm{fac}}:X\times\cP\to[0,1],\qquad
\sigma_A^{\mathrm{fac}}:X\times\cC\to[0,1],\qquad
\gamma_A^{\mathrm{fac}}:\cC\times\cP\to[0,1],
\]
such that
\[
\theta_A(x,c,p)=\theta_A^{\mathrm{fac}}(x,p),
\qquad
\sigma_A(x,c,p)=\sigma_A^{\mathrm{fac}}(x,c),
\qquad
\gamma_A(x,c,p)=\gamma_A^{\mathrm{fac}}(c,p).
\]
This factorized form is not a universal restriction on ALA. It is a canonical transparent subclass in which each coordinate is elicited from its most direct semantic arguments: valuation from the object and perspective, contextual alignment from the object and contextual condition, and epistemic restraint from the contextual condition and operational interface. When case-specific stakes make restraint object-dependent, as discussed in Remark~\ref{rem:general-factorized}, the general form rather than this subclass applies.
\end{definition}

\begin{principle}[Semantic auditability of judgment]
An ALA state should preserve not only a reported degree, but also a trace of the conditions under which that degree was produced. The purpose is not to guarantee that every evaluator is truthful, but to make the semantic source of a judgment auditable: what is supported, how it is contextually aligned, and how cautiously it should be used.
\end{principle}

\begin{remark}[General and factorized forms]
\label{rem:general-factorized}
The general definition allows all three coordinates to depend on \((x,c,p)\), which avoids treating the factorized dependencies as ontological restrictions. The factorized form of Definition~\ref{def:role-factorized} is used when it improves interpretability and makes the role structure explicit. General dependence, however, does not license semantic absorption: contextual inadequacy should not be silently absorbed into valuation, and epistemic restraint should not be confused with low support or low contextual alignment. The governing principle is that, even under general dependence, each coordinate may depend on any of \((x,c,p)\) only insofar as that input bears on its own role, and it must not take over the content for which another coordinate is responsible. Two instances make this concrete. First, valuation may depend on context to determine what the concept means in that setting, since concept meaning can be context relative, but it must not absorb the adequacy of the context, which remains the separate responsibility of contextual alignment. Second, the role of each coordinate is fixed: \(\gamma_A\) records restraint toward operational use, not a content-quality assessment of \(\theta_A\), so allowing \(\gamma_A\) to depend on \(x\) accommodates case-specific stakes, such as the irreversibility associated with a particular object, without converting restraint into a second-order judgment about valuation. This keeps the ontological distinction between epistemic restraint and a reliability-type qualifier, drawn in Section~\ref{sec:related-work}, intact under general dependence. The algebraic operations, the cognitive order, the admissible scalar projections, and the non-collapse results developed below apply to the resulting ALA states \(A\in\cA\) and do not depend on a particular factorization of the component maps.
\end{remark}

\begin{table}[!htbp]
\centering
\caption{Role boundary among the inputs and coordinates of an ALA assessment.}
\label{tab:input-role-boundary}
\fontsize{9.5}{11.5}\selectfont
\setlength{\tabcolsep}{3pt}
\renewcommand{\arraystretch}{1.12}
\begin{tabularx}{\textwidth}{>{\raggedright\arraybackslash}p{0.12\textwidth}>{\raggedright\arraybackslash}p{0.24\textwidth}>{\raggedright\arraybackslash}X}
\toprule
Symbol & Status & Operative meaning \\
\midrule
\(c\) & Input condition & Situational, evidential, linguistic, and institutional conditions under which the judgment is formed. \\
\(p\) & Declared perspective or protocol & Interpretive purpose, assessment protocol, disciplinary lens, or intended use. Its influence must remain role typed. \\
\(\sigma\) & ALA coordinate & Adequacy of the contextual conditions for forming and assessing the valuation under the declared purpose or protocol. \\
\(\gamma\) & ALA coordinate & Pressure for restraint toward operational commitment, including stakes, irreversibility, loss severity, and ethical burden. \\
\bottomrule
\end{tabularx}
\end{table}

The table is a boundary discipline, not a factorization theorem. In the general form, all three coordinates may depend on \((x,c,p)\); the requirement is that each dependence be interpreted through the coordinate's own semantic role.

\subsection{The Three Interpretive Coordinates}
The first coordinate records how strongly the object supports the concept under the adopted perspective. It is the interpretive valuation.

\begin{definition}[Interpretive valuation]
For an ALA state \(A\in\cA\), the coordinate \(\theta_A\in[0,1]\) is called the \emph{interpretive valuation}. It records the degree to which the object is interpreted as satisfying the concept under the relevant perspective.
\end{definition}

The valuation coordinate is the closest coordinate to classical membership. If only \(\theta\) is reported, then the ALA state is reduced to a familiar scalar assessment. Nevertheless, \(\theta\) is not the whole state. It records the intensity of the interpreted predicate, but it does not record whether the context supports that interpretation or whether the judgment is being held under strong epistemic restraint.

A value such as \(\theta=0.80\) is a valuation statement, not a complete ALA state. The value may be associated with a highly aligned and weakly restrained assessment, such as \((0.80,0.90,0.10)\), or with a weakly aligned and strongly restrained assessment, such as \((0.80,0.30,0.85)\). These states share the same valuation but do not carry the same interpretive information.

The second coordinate does not grade the object at all, but the conditions under which the valuation is formed. This is contextual alignment.

\begin{definition}[Contextual alignment]
For an ALA state \(A\in\cA\), the coordinate \(\sigma_A\in[0,1]\) is called \emph{contextual alignment}. It records the degree to which the judgment is aligned with, supported by, or appropriately grounded in the context in which the assessment is made.
\end{definition}

Contextual alignment is not the context itself. It is not a label for the situation, and it is not a context-dependent membership grade in disguise. It is a preserved coordinate of the output state. A high value of \(\sigma\) indicates that the interpretive environment coherently supports the valuation. A low value indicates contextual fragility, weak contextual support, mismatch between the evidence and the decision environment, or instability of the valuation under the relevant contextual profile.

It is also important to separate contextual alignment from reliability, probability, and indeterminacy. Reliability concerns the trustworthiness of a source or measuring process. Probability concerns likelihood. Indeterminacy concerns unresolved truth-status or incomplete determination. Contextual alignment concerns the fit between the judgment and the interpretive situation in which that judgment is produced. A reliable expert can issue a low-alignment judgment if the available context is poorly matched to the predicate being evaluated. Conversely, a moderately reliable source can operate in a highly aligned context when the situational cues are coherent and task-appropriate.

In applied settings, \(\sigma\) may be elicited through contextual support, contextual coherence, deviation from an ideal contextual profile, stability across admissible contextual perturbations, or agreement between the judgment and domain-specific background conditions. These are elicitation routes, not universal definitions. ALA requires the preservation of contextual alignment as a role; it does not require every discipline to measure that role by the same formula. A compact admissible elicitation schema for \(\sigma\), together with the discipline that keeps such elicitation compatible with the primitive status of the coordinates, is given in Section~\ref{subsec:elicitation}.

The third coordinate turns from evidence to action, and enters the state with reversed polarity. It is epistemic restraint.

\begin{definition}[Epistemic restraint]
For an ALA state \(A\in\cA\), the coordinate \(\gamma_A\in[0,1]\) is called \emph{epistemic restraint}. It records the degree to which the assessment is held back from immediate operational permissiveness because of stakes, irreversibility, cost of false commitment, ethical burden, or the need to avoid premature operational commitment.
\end{definition}

The polarity of \(\gamma\) is reversed relative to \(\theta\) and \(\sigma\). Larger \(\theta\) expresses stronger valuation, and larger \(\sigma\) expresses stronger contextual alignment. Larger \(\gamma\), however, expresses stronger restraint. A high-restraint state is not cognitively more permissive. It is more cautious. This polarity is not a cosmetic convention. It is the reason why the cognitive order and the lattice structure developed in Section~4 treat the third coordinate in the opposite direction.

Epistemic restraint is not passivity, and should not be interpreted as a preference for inaction.
It applies to any premature operational commitment, including both
intervention and non-intervention. A high value of \(\gamma\) may therefore
block an unsafe action, but it may also block an unsafe delay.

Epistemic restraint is also not a substitute for non-membership, hesitation, or indeterminacy. Non-membership concerns opposition to belonging. Hesitation concerns unallocated or unresolved membership mass in some fuzzy extensions. Indeterminacy concerns an unresolved truth-status. Epistemic restraint is different. It concerns the inhibition of operational assertion, even when valuation and contextual alignment may be substantial.

The representation \(\cA=[0,1]^3\) does not assert that valuation, contextual alignment, and epistemic restraint are raw commensurable quantities. It asserts that each role is represented by a normalized bounded score. The common carrier \([0,1]\) supplies a shared mathematical codomain for order, boundary elements, closure of operations, and scalar interfaces. It does not erase semantic typing.

\begin{principle}[Normalized bounded codomain]
In ALA, the interval \([0,1]\) is a normalized bounded codomain for role scores. It is not a common physical unit, not a claim that the three coordinates have the same empirical meaning, and not a requirement that the same elicitation instrument be used for all coordinates.
\end{principle}

\begin{proposition}[Bounded normalization]
Suppose that each interpretive role is elicited on a bounded ordered scale \(I_j=[a_j,b_j]\), with \(a_j<b_j\), for \(j\in\{\theta,\sigma,\gamma\}\). Then each role score can be normalized to \([0,1]\) by an order-preserving affine map
\begin{equation}
r_j(t)=\frac{t-a_j}{b_j-a_j}.
\label{eq:role-normalization}
\end{equation}
If a role has reversed cognitive polarity, as epistemic restraint does, the polarity is handled by the cognitive order, not by denying the normalized codomain.
\end{proposition}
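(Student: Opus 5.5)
The plan is to verify directly that the affine map \(r_j\) in \eqref{eq:role-normalization} is well defined, maps \(I_j\) onto \([0,1]\), and is strictly order preserving. After that, I would argue that the reversed polarity of restraint is a matter of orientation and does not affect the codomain.

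First, since \(a_j<b_j\), the denominator \(b_j-a_j\) is strictly positive, so \(r_j\) is well defined on \(I_j\). It is affine with slope \(1/(b_j-a_j)>0\). Hence, for \(t,t'\in I_j\), we have \(t\le t'\) if and only if \(r_j(t)\le r_j(t')\), which gives strict monotonicity and injectivity. Evaluating at the endpoints gives \(r_j(a_j)=0\) and \(r_j(b_j)=1\). By monotonicity, \(r_j(I_j)\subseteq[0,1]\). For surjectivity, take any \(u\in[0,1]\) and set \(t=a_j+u(b_j-a_j)\), which lies in \(I_j\) and satisfies \(r_j(t)=u\). So \(r_j\) is an order isomorphism of \(I_j\) onto \([0,1]\), with inverse \(u\mapsto a_j+u(b_j-a_j)\). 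Applying this to each \(j\in\{\theta,\sigma,\gamma\}\) separately sends an elicited triple into \(\cA=[0,1]^3\) coordinatewise. Because each \(r_j\) uses only its own scale, no role is mixed with another, which is consistent with the Normalized bounded codomain principle.

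For the polarity clause, I would note that \(r_\gamma\) is itself increasing. A larger raw restraint score therefore maps to a larger \(\gamma\), so the semantic reading "larger \(\gamma\) means more restraint" is preserved. The reversal needed for cognitive permissiveness is then handled by the order on \(\cA\), which treats the third coordinate dually, for instance through \(1-\gamma\). There is no need for a decreasing normalization or for a codomain other than \([0,1]\). Composing \(r_\gamma\) with the order-reversing involution \(u\mapsto 1-u\) of \([0,1]\) stays inside \([0,1]\), so orientation is a property of the order and not of the carrier.

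The computations are routine. The only delicate step is this polarity remark, because the relevant order is developed only in Section 4. I would therefore state it as a well-posedness observation: any order-reversing presentation of restraint factors through the involution \(u\mapsto 1-u\) on the same normalized codomain. If restraint happened to be elicited on a scale oriented in the opposite direction, one would simply use \(t\mapsto (b_\gamma-t)/(b_\gamma-a_\gamma)\), which is again affine onto \([0,1]\).
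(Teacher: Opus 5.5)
Your proposal is correct and follows essentially the same route as the paper: you check that \(r_j\) is increasing with \(r_j(a_j)=0\) and \(r_j(b_j)=1\), so each role is expressed on \([0,1]\) without mixing roles, and you note that the reversed polarity of \(\gamma\) is handled by the cognitive order (via \(1-\gamma\)) rather than by the normalization. Your explicit surjectivity and inverse computation, and the remark on restraint scales oriented the opposite way, are harmless refinements of the paper's shorter argument.
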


\begin{proof}
For each bounded interval \([a_j,b_j]\), the map in Eq.~\eqref{eq:role-normalization} is increasing, sends \(a_j\) to \(0\), sends \(b_j\) to \(1\), and preserves the order internal to the elicited role. Hence each role can be expressed on a unit interval without making the roles semantically identical. For \(\gamma\), the normalized value still measures restraint. Its reversed effect on cognitive permissiveness is encoded later by the order relation, where larger \(\gamma\) means stronger restraint.
\end{proof}

\begin{remark}[Carrier-independent role architecture]
\label{rem:carrier-independent-architecture}
The unit cube \(\cA=[0,1]^3\) is the normalized real-valued realization developed completely in this paper; it is not an ontological restriction on the ALA role architecture. At a more general level, the typed state space may be written as
\begin{equation}
\mathcal A^{\star}
=
L_\theta\times L_\sigma\times L_\gamma^{\mathrm{op}},
\label{eq:general-ala-carrier}
\end{equation}
provided that each role carrier is equipped with a declared compatible order, boundary elements, and the role-preserving operations required by the intended calculus. The three carriers need not be identical. For example, a valuation role may be interval-valued while contextual alignment remains real-valued and restraint is represented by another bounded ordered carrier:
\[
A^{\star}
=
\bigl(a_\theta,a_\sigma,a_\gamma\bigr)
\in
\mathbb I_\theta\times[0,1]_\sigma\times\mathbb T_\gamma^{\mathrm{op}}.
\]
Its ALA identity is fixed by the three semantic roles, not by a requirement that the carriers share one numerical form.

This observation is architectural rather than a claim that every structured uncertainty formalism automatically supplies an admissible ALA carrier. A carrier-specific extension is available only after its native order, bounds, rolewise operations, and any standardization map have been declared and shown compatible with the ALA role directions. Internal components of a structured role value qualify that role; they do not thereby become additional primitive ALA roles. The unit cube is therefore a common normalized interface and a complete first realization of ALA, rather than the outer boundary of the theory.
\end{remark}

\begin{proposition}[Typed-product lattice inheritance]
\label{prop:typed-product-lattice-inheritance}
Let \(L_\theta\), \(L_\sigma\), and \(L_\gamma\) be bounded distributive lattices. Define the role-typed order on \(\mathcal A^{\star}=L_\theta\times L_\sigma\times L_\gamma^{\mathrm{op}}\) by
\[
(a_\theta,a_\sigma,a_\gamma)
\preceq_{\mathcal A^{\star}}
(b_\theta,b_\sigma,b_\gamma)
\]
if and only if
\[
a_\theta\leq_\theta b_\theta,
\qquad
a_\sigma\leq_\sigma b_\sigma,
\qquad b_\gamma\leq_\gamma a_\gamma.
\]
Then \(\mathcal A^{\star}\) is a bounded distributive lattice.
\end{proposition}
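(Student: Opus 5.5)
The plan is to prove the statement in two stages: first that the order dual of a bounded distributive lattice is again a bounded distributive lattice, and then that a finite product of bounded distributive lattices, with the componentwise order, is a bounded distributive lattice. Since the role-typed order \(\preceq_{\mathcal A^{\star}}\) is, by its defining clause, exactly the componentwise order on \(L_\theta\times L_\sigma\times L_\gamma^{\mathrm{op}}\) (the third clause \(b_\gamma\leq_\gamma a_\gamma\) is precisely \(a_\gamma\leq_\gamma^{\mathrm{op}} b_\gamma\)), the two stages combine directly to give the result.

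For the first stage, I will equip \(L_\gamma^{\mathrm{op}}\) with meet \(a\wedge^{\mathrm{op}}b:=a\vee_\gamma b\) and join \(a\vee^{\mathrm{op}}b:=a\wedge_\gamma b\), bottom \(1_\gamma\) and top \(0_\gamma\). Greatest lower and least upper bounds are exchanged under order reversal, so these are indeed the lattice operations of the dual order, and the bounds swap in the same way. Distributivity of the dual is the only point needing care. I will use the standard fact that in any lattice the two distributive laws, \(a\wedge(b\vee c)=(a\wedge b)\vee(a\wedge c)\) and \(a\vee(b\wedge c)=(a\vee b)\wedge(a\vee c)\), are equivalent. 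Given this, the dual meet-over-join law is literally the original join-over-meet law, which holds because \(L_\gamma\) is distributive. If the equivalence of the two laws is not taken as known, I will include its short derivation, expanding \((a\vee b)\wedge(a\vee c)\) with the first law and simplifying by absorption. I expect this to be the main (if modest) obstacle, simply because it is the one place where something beyond bookkeeping is used.

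For the second stage, I will define the operations on \(\mathcal A^{\star}\) coordinatewise:
\[
A\wedge B=(a_\theta\wedge_\theta b_\theta,\;a_\sigma\wedge_\sigma b_\sigma,\;a_\gamma\vee_\gamma b_\gamma),\qquad
A\vee B=(a_\theta\vee_\theta b_\theta,\;a_\sigma\vee_\sigma b_\sigma,\;a_\gamma\wedge_\gamma b_\gamma).
\]
I will then check three things.
\begin{enumerate}
\item \(A\wedge B\) is a lower bound of \(A\) and \(B\), and any lower bound \(C\) satisfies \(C\preceq_{\mathcal A^{\star}}A\wedge B\). This holds because each clause of the order is tested separately in its own coordinate, so the infimum is taken coordinate by coordinate. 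The dual argument gives the join.
\item The bottom element is \((0_\theta,0_\sigma,1_\gamma)\) and the top element is \((1_\theta,1_\sigma,0_\gamma)\). This is immediate from the bounds of each factor, with the bounds of \(L_\gamma\) reversed.
\item Distributivity holds. Both sides of \(A\wedge(B\vee C)=(A\wedge B)\vee(A\wedge C)\) are computed coordinatewise, so the identity reduces to the distributive law in \(L_\theta\), in \(L_\sigma\), and in \(L_\gamma^{\mathrm{op}}\), the last of which was secured in the first stage.
\end{enumerate}
Together these show that \(\mathcal A^{\star}\) is a bounded distributive lattice.
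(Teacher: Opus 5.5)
Your proposal is correct and takes the same route as the paper. The paper's proof likewise reduces the claim to two standard facts: the order dual of a bounded distributive lattice is again one, and finite direct products preserve boundedness and distributivity. You additionally carry out the verifications the paper leaves implicit, namely the swapped operations and bounds on \(L_\gamma^{\mathrm{op}}\), the equivalence of the two distributive laws, and the coordinatewise meet, join, and bounds on \(\mathcal A^{\star}\).
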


\begin{proof}
The order dual of a bounded distributive lattice is again a bounded distributive lattice, and finite direct products preserve boundedness and distributivity. Hence \(L_\theta\times L_\sigma\times L_\gamma^{\mathrm{op}}\) inherits a bounded distributive lattice structure under the stated role-typed order.
\end{proof}

The proposition transfers the order-theoretic skeleton, not automatically every operation developed below. A generalized complement, arithmetic, aggregation family, or projection requires the corresponding native structure on each selected carrier. The present paper supplies those constructions for \(L_\theta=L_\sigma=L_\gamma=[0,1]\), while carrier-specific and heterogeneous realizations are left as a declared research program.

This point is crucial for avoiding a category error. ALA does not claim that \(0.70\) units of valuation, \(0.70\) units of contextual alignment, and \(0.70\) units of restraint are physically the same kind of quantity. It claims that all three can be represented as bounded role intensities inside a common ordered state space.

\subsection{Non-absorptive Role Separation}
\label{subsec:role_separated_elicitation}

The role boundary between \(\theta\) and \(\sigma\) is fixed by what \(\theta\) is. If \(\theta\) were a final context-adjusted scalar judgment, a separate contextual coordinate would be redundant, and contextual weakness could simply be folded into a lower reported valuation. In ALA, \(\theta\) is not a final scalar judgment. It is the primitive content-directed valuation of the object with respect to a concept under an adopted perspective, and \(\sigma\) records the degree to which the interpretive conditions are aligned with the formation and assessment of that valuation.

The two coordinates therefore answer different role-specific questions. The valuation coordinate answers: to what degree does the object, signal, pattern, or predicate-relevant content support the concept? The contextual-alignment coordinate answers: to what degree are the interpretive conditions appropriate, aligned, and contextually supportive for forming and assessing that valuation? The first question concerns the content of the judgment. The second concerns the contextual alignment of the conditions under which the valuation is formed and assessed.

\begin{principle}[Valuation-context separation]
The valuation coordinate records the content-directed strength of the predicate assessment. The contextual-alignment coordinate records the degree to which the interpretive conditions are aligned with the formation and assessment of that valuation. Contextual weakness may affect a derived scalar readout, but it should not be silently absorbed into the primitive valuation coordinate when the purpose is to preserve the semantic source of the judgment.
\end{principle}

The principle does not claim that context has no effect on reported judgments. On the contrary, ALA explicitly allows context to influence subsequent reporting, thresholding, ranking, or decision interfacing. The point is that this influence should occur after the primitive state has been preserved. If contextual weakness is absorbed into \(\theta\) at the primitive level, the resulting number no longer reveals whether a low scalar value is caused by weak predicate support, weak contextual alignment, or both.

\begin{definition}[Role-separated elicitation]
An elicitation protocol for ALA states is called role-separated if the valuation coordinate and the contextual-alignment coordinate are obtained from distinct operational questions:
\[
\theta_A:\quad \text{How strongly does the object support the concept under the adopted perspective?}
\]
and
\[
\sigma_A:\quad \text{How well are the interpretive conditions aligned with the formation and assessment of that valuation?}
\]
A protocol violates role separation when evidence of contextual inadequacy is silently absorbed into \(\theta_A\) without preserving a separate record of contextual alignment.
\end{definition}

Role-separated elicitation can be guided by three counterfactual tests. First, if the object, predicate, and content-directed evidence are held fixed while measurement quality, calibration, domain fit, or interpretive environment changes, the change should primarily affect \(\sigma\), not \(\theta\). Second, if a weakness can be repaired by improving contextual information, calibration, background conditions, or domain alignment without changing the object-predicate evidence itself, the weakness belongs primarily to \(\sigma\); if the remedy is hypothesis revision or a change in predicate-fit, it belongs primarily to \(\theta\). Third, if the evidential basis and contextual alignment remain fixed while the intended use changes from a lower-commitment, comparatively reversible action to a high-stakes irreversible commitment, the change should be represented in \(\gamma\), not in \(\theta\) or \(\sigma\). These tests do not eliminate expert judgment, but they make role assignment auditable and prevent contextual weakness or operational risk from being silently absorbed into the wrong coordinate.

\begin{remark}[Noisy-scan test]
Consider a radiologist evaluating a lesion from a degraded or noisy scan. If the lesion-related morphology, boundary pattern, and predicate-directed evidence are held fixed while only the imaging quality or calibration condition deteriorates, the appropriate ALA response is not to lower \(\theta\) merely to compensate for the poorer scan. Lowering \(\theta\) would state that the lesion is less suspicious as a predicate-directed object, although the change concerns the interpretive setting rather than the lesion itself. The degradation should instead be recorded primarily in \(\sigma\), while any change in \(\theta\) should be declared only when the predicate-directed evidence itself has changed. This test makes the valuation--context boundary operational: contextual degradation may later reduce a scalar readout, but it should not be silently absorbed into primitive valuation.
\end{remark}

Role separation should not be interpreted as the claim that observation is never context-mediated. In many applications, the observed signal itself may be affected by measurement conditions, environmental factors, or the available information channel. The requirement is therefore not that \(\theta\) must remain numerically invariant under every contextual change. The requirement is that evidence concerning predicate-content support and evidence concerning contextual alignment should not be conflated without declaration. When the weakness concerns the interpretive setting, calibration condition, evidential environment, domain match, or contextual support of the valuation, it belongs primarily to \(\sigma\). When the weakness concerns the object-concept match itself, it belongs primarily to \(\theta\).

This separation has direct diagnostic consequences. Consider two ALA states with the same restraint coordinate:
\[
A_1=(0.80,0.30,\gamma),\qquad A_2=(0.30,0.80,\gamma).
\]
Under an absorptive scalar readout such as \(M(\theta,\sigma)=\theta\sigma\), both states produce the same value:
\[
M(0.80,0.30)=M(0.30,0.80)=0.24.
\]
However, the two states do not have the same operational meaning. In \(A_1\), the content-directed valuation is strong but the contextual alignment is weak. The appropriate response may be to improve the context, collect better data, recalibrate the measurement condition, or request a second assessment. In \(A_2\), the contextual alignment is strong but the valuation itself is weak. The appropriate response may instead be to revise the hypothesis, reconsider the object-concept match, or reduce the role of that judgment. A scalar value alone cannot distinguish these two diagnostic directions.

\begin{proposition}[Diagnostic indistinguishability under absorptive scalar projection]
Let \(A_1=(\theta_1,\sigma_1,\gamma)\) and \(A_2=(\theta_2,\sigma_2,\gamma)\) be two ALA states such that
\[
\theta_1>\theta_2,\qquad \sigma_1<\sigma_2.
\]
Suppose a scalar readout \(M:[0,1]^2\to[0,1]\) satisfies
\[
M(\theta_1,\sigma_1)=M(\theta_2,\sigma_2).
\]
Then any representation retaining only the scalar value \(M(\theta,\sigma)\) cannot distinguish whether the reduced scalar output is associated with stronger valuation under weaker contextual alignment or weaker valuation under stronger contextual alignment.
\end{proposition}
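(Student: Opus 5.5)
The proof is essentially a formalization of "a function cannot separate two points that it maps to the same value." I would make precise what "any representation retaining only the scalar value" means. Such a representation is any map of the form \(R(A)=g\bigl(M(\theta_A,\sigma_A)\bigr)\), possibly together with the shared restraint coordinate \(\gamma\), where \(g\) is an arbitrary function on \([0,1]\). Any downstream procedure \(D\) that uses only this retained information therefore factors as \(D(A)=h\bigl(M(\theta_A,\sigma_A),\gamma_A\bigr)\) for some \(h\).

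The key steps, in order, are as follows.

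First, I note that \(A_1\) and \(A_2\) share the third coordinate \(\gamma\) by hypothesis. By the assumed equality \(M(\theta_1,\sigma_1)=M(\theta_2,\sigma_2)\), they also share the retained scalar. Hence \(R(A_1)=R(A_2)\) and \(D(A_1)=D(A_2)\) for every such factored \(D\).

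Second, I define the diagnostic predicate to be detected:
\[
\Delta(A_1,A_2)=\text{``}A_1\text{ has the larger valuation and the smaller alignment''}.
\]
By the hypotheses \(\theta_1>\theta_2\) and \(\sigma_1<\sigma_2\), this predicate holds for the ordered pair \((A_1,A_2)\) and fails for the swapped pair \((A_2,A_1)\). In other words, the diagnostic label ``stronger valuation under weaker alignment'' applies to \(A_1\) relative to \(A_2\), while its mirror label applies to \(A_2\) relative to \(A_1\).

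Third, I argue by contradiction. Suppose some \(D\) factoring through the retained scalar (and \(\gamma\)) correctly reported, for each state, which of the two diagnostic directions it belongs to. Then it would have to assign \(A_1\) and \(A_2\) different labels. This contradicts \(D(A_1)=D(A_2)\). So no such \(D\) exists.

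I would close with a short remark that the hypotheses are non-vacuous, using the example already given in the text: \(M(\theta,\sigma)=\theta\sigma\) with \((0.80,0.30)\) and \((0.30,0.80)\). More generally, any continuous \(M\) that is strictly increasing in both arguments has level sets containing such anti-ordered pairs, though the statement only needs existence.

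The main obstacle is conceptual rather than computational. The phrase ``cannot distinguish'' must be pinned down as this factorization property, so that the contradiction argument is rigorous and not merely rhetorical. Once that formalization is fixed, the remaining steps are immediate.
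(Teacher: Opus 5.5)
Your proposal is correct and follows the same route as the paper's proof: the two states have the same retained scalar (and, as you note, the same \(\gamma\)), yet their primitive coordinates point in opposite diagnostic directions, so no procedure that depends only on the retained information can separate them. Your factorization \(D(A)=h(M(\theta_A,\sigma_A),\gamma_A)\) simply makes precise what the paper states informally as ``the retained scalar information is identical for the two states.''
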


\begin{proof}
The reduced representation maps \(A_1\) and \(A_2\) to the same scalar value by assumption. Hence the retained scalar information is identical for the two states. However, the primitive ALA coordinates distinguish them: \(A_1\) has larger valuation and smaller contextual alignment, whereas \(A_2\) has smaller valuation and larger contextual alignment. Therefore the scalar readout loses the diagnostic direction of the difference between valuation weakness and contextual weakness.
\end{proof}

The proposition is elementary, but it shows why scalar projection before role preservation is operationally lossy. Its purpose is not to prohibit scalar readouts. ALA does not deny that context may modulate a final reported value. It denies that such modulation should erase the primitive roles before reasoning begins.

To make this distinction explicit, one may define a context-modulated valuation as a derived readout:
\[
\widetilde{\theta}_M(A)=M(\theta_A,\sigma_A),
\]
where \(M\) is a declared context-modulation operator. A conservative context-modulation operator may be required to satisfy
\[
M(0,\sigma)=0,\qquad M(\theta,1)=\theta,\qquad M(\theta,\sigma)\leq \theta,
\]
and to be nondecreasing in both arguments. Typical conservative choices include
\[
M_{\times}(\theta,\sigma)=\theta\sigma,\qquad
M_{\min}(\theta,\sigma)=\min\{\theta,\sigma\},
\]
or, more generally,
\[
M_{\alpha}(\theta,\sigma)=\theta\sigma^\alpha,\qquad \alpha>0.
\]
Such a readout may be useful when an external system requires a single context-sensitive valuation. Nevertheless, \(\widetilde{\theta}_M\) is not the primitive ALA valuation and is not a replacement for the ALA state. Unlike a full admissible scalar projection, \(\widetilde{\theta}_M\) does not necessarily incorporate epistemic restraint and therefore should be understood only as a partial context-sensitive readout. The correct direction is
\[
A\in\cA\longrightarrow \widetilde{\theta}_M(A),
\]
not
\[
A\in\cA\longrightarrow \theta^\ast
\quad\text{and then discard }\sigma.
\]

This distinction also clarifies the status of \(\theta\) alone. The coordinate \(\theta\) is meaningful without \(\sigma\), because it records the content-directed valuation. However, it is not complete without \(\sigma\), because it does not record the contextual alignment of the conditions under which the valuation is made. In the same way, a reported effect size in an empirical study is meaningful, but it is not sufficient for a full judgment unless the study design, sample adequacy, and contextual conditions are also considered.

Contextual alignment may modulate a derived scalar readout, but it must not be absorbed into primitive valuation if the aim is to preserve the interpretive source of the judgment. This is the methodological form of reasoning before projection.

The distinction between epistemic restraint and confidence should not be reduced to a statement of semantic independence. Independence is necessary, but it is not sufficient for establishing a primitive role in a formal calculus. The more fundamental distinction is that confidence and epistemic restraint answer different operational questions.

A confidence-like quantity usually answers the question: how assured is the evaluator about the judgment? By contrast, epistemic restraint answers the question: to what degree should the judgment resist premature operationalization? Thus, confidence concerns the strength, stability, or assurance of belief, whereas epistemic restraint concerns the permission boundary between a preserved judgment and its immediate use in assertion, action, thresholding, ranking, or scalar projection.

This gives the following principle.

\begin{principle}[Support-permission separation]
The valuation and contextual-alignment coordinates describe the support carried by a judgment. The restraint coordinate describes the degree to which this support is withheld from immediate operational commitment. Hence epistemic restraint is not a support-strengthening coordinate. It is a permission-limiting coordinate.
\end{principle}

Accordingly, an ALA state may be strongly supported and strongly restrained at the same time. A state such as
\[
A=(0.90,0.85,0.90)
\]
is not contradictory. It represents a judgment with strong content-directed valuation and strong contextual alignment, but with a high degree of restraint against immediate operationalization. In a high-stakes setting, this means that the judgment may be well supported while still being withheld from direct action because the cost of a false commitment, irreversibility, ethical burden, or operational sensitivity is high.

This distinction is central to ALA. The pair \((\theta,\sigma)\) records the support side of the judgment, whereas \(\gamma\) records the permission-limiting side. The scalar interface may later combine these components, but the primitive state preserves the distinction before projection.

\subsection{Admissible Elicitation}
\label{subsec:elicitation}

The three coordinates have now been introduced as primitive interpretive roles. In a given application, their numerical values are obtained through domain-specific elicitation. In a clinical setting, for instance, valuation may be informed by lesion shape, boundary, and texture; contextual alignment by image quality, calibration, and domain fit; and epistemic restraint by the severity and irreversibility of the contemplated action. Such factors may supply numerical access to the coordinates, but they do not define them. This subsection records the minimal discipline that keeps elicitation compatible with the primitive status of the triad.

The guiding distinction is between a semantic role and a route of access to that role.

\begin{principle}[Role and measurement separation]
\label{prin:role-measurement}
Each ALA coordinate is a primitive semantic role. An elicitation procedure is a domain-specific route for assigning a numerical value to that role. Eliciting a coordinate from role-specific factors does not convert the coordinate into a derived quantity, just as measuring a temperature with a particular instrument does not make temperature a property of that instrument.
\end{principle}

Equivalently, elicitation supplies access to a primitive role; it does not define the role away.

This separation is the upstream counterpart of scalar projection. An admissible projection is a declared downstream interface from the state to a scalar readout, and no one concludes from its existence that the state is merely a derived scalar. By the same reasoning, elicitation is a declared upstream interface into the state, and the state is not a derived quantity of its elicitation factors. 

\begin{figure}[!htbp]
\centering
\resizebox{0.95\textwidth}{!}{%
\begin{tikzpicture}[
    font=\small,
    >=Latex,
    box/.style={
    draw=Ccube,
    rounded corners,
    align=center,
    inner sep=5pt,
    minimum height=0.95cm,
    text width=3.7cm
    },
     corebox/.style={
    draw=Cproj,
    very thick,
    rounded corners,
    align=center,
    inner sep=5pt,
    minimum height=1.00cm,
    text width=3.9cm
    },
    arr/.style={-{Latex[length=2.5mm]}, thick, Ccube},
    lab/.style={
        font=\scriptsize,
        align=center,
        fill=white,
        inner sep=2pt
    }
]

\node[box] (factors) at (0,2.5)
{role-typed factors\\[-1pt]
\emph{boundary scope}};

\node[corebox] (state) at (7.0,2.5)
{\(A\in\cA\)\\[-1pt]
\emph{protected ALA core}};

\node[box] (ops) at (7.0,-1.1)
{state-level operations\\[-1pt]
\emph{order, lattice, complement}\\
\emph{arithmetic, aggregation}};

\node[box] (scalar) at (14.0,-1.1)
{\(\phi(A)\)\\[-1pt]
\emph{scalar readout}};

\draw[arr] (factors.east) -- node[lab,above=4pt]
{declared admissible\\elicitation} (state.west);

\draw[arr] (state.south) -- node[lab,right=6pt]
{operate} (ops.north);

\draw[arr] (ops.east) -- node[lab,above=4pt]
{declared admissible\\projection} (scalar.west);

\end{tikzpicture}%
}

\caption{Protected core between elicitation and projection.}
\label{fig:elicitation-state-projection}
\end{figure}

Fig.~\ref{fig:elicitation-state-projection} separates three layers that must not be conflated. Role-typed factors have boundary scope: they provide admissible access to the ALA state but do not become coordinates of the calculus. Once formed, the protected state \(A\in\cA\) is the object on which the cognitive order, lattice operations, complement, arithmetic, aggregation, projection, and non-collapse results operate. Scalar projection appears only as a declared downstream interface from the preserved ALA state.

The reason the elicitation factors are not themselves coordinates is a difference of operative scope.

A primitive ALA coordinate has global operative scope: it remains active throughout the ALA calculus, participating in the cognitive order, lattice operations, complement, projection, arithmetic, and aggregation. An elicitation factor has boundary scope: it is active only in forming the state and does not participate in any ALA operation once the state has been formed. After elicitation, the factor is discharged; the value it helped produce participates as a primitive coordinate.

A minimal admissibility standard follows. A route \(E_r\) that elicits a coordinate \(r\in\{\theta,\sigma,\gamma\}\) from a role profile is admissible when it is bounded in \([0,1]\), boundary-grounded, monotone with respect to the role it elicits, and role-typed: its input factors must belong to the semantic type of the target role. Content-directed factors elicit \(\theta\), context-directed factors elicit \(\sigma\), and commitment-directed factors elicit \(\gamma\). Cross-role leakage, such as lowering \(\theta\) to express the irreversibility of an action, is an elicitation type error, since it reintroduces at the input boundary precisely the absorption that the valuation--context separation and the support--permission separation forbid. ALA therefore requires admissible role-preserving routes to its coordinates, not a single universal elicitation formula; its universality lies in the admissibility conditions rather than in any particular function. When several admissible routes are available, the selected route is declared, exactly as a scalar projection is declared.

For example, a contextual-alignment value may be elicited from a context-directed profile \(z_A^\sigma(x,c,p)\), such as data completeness, calibration quality, and domain match. If a normalized discrepancy
\[
d_{A,p}^{\sigma}
\bigl(z_A^{\sigma}(x,c,p),z_{A,p}^{\sigma,\star}\bigr)\in[0,1]
\]
measures the deviation from an ideal contextual profile \(z_{A,p}^{\sigma,\star}\), one admissible route is
\[
\sigma_A(x,c,p)
=
1-d_{A,p}^{\sigma}
\bigl(z_A^{\sigma}(x,c,p),z_{A,p}^{\sigma,\star}\bigr).
\]
This route measures contextual adequacy, not predicate support and not permission for operational use. Once \(\sigma_A(x,c,p)\) is elicited, the profile variables are not processed by the ALA calculus; only the resulting state coordinate is.

The valuation coordinate is the anchor case of this scheme. When \(\theta\) is elicited directly from a normalized predicate-support score, the route reduces to classical fuzzy membership. ALA thus does not displace graded membership; it situates it as the simplest admissible elicitation of a single coordinate, while the triadic state keeps contextual alignment and epistemic restraint explicit rather than absorbing them into valuation. In this sense, ALA preserves continuity with the fuzzy-set tradition while making explicit the additional roles that classical membership leaves untyped.

For \(\gamma\), admissible elicitation must instead be commitment-directed. To keep \(\gamma\) from collapsing into confidence or reliability, a factor used to elicit \(\gamma\) should not measure the strength or quality of evidence for the predicate; it should measure pressure for restraint toward commitment, such as loss severity or irreversibility. In the present foundational treatment, an uncertainty term that reports weak predicate support belongs to \(\theta\), and one that reports inadequate interpretive conditions belongs to \(\sigma\), not to the restraint profile.

One consequence of this discipline concerns aggregation. Because factors have boundary scope, the aggregation operators of Section~\ref{sec:aggregation} act on states, not on factors. State-level aggregation of already formed ALA states,
\[
\operatorname{Agg}_{A}(A_1,\ldots,A_n),
\]
is in general not equal to factor-level fusion followed by re-elicitation. Here \(z_i^r\) denotes the role profile used in a particular elicitation route. Thus, in general,
\[
\operatorname{Agg}_{A}(A_1,\ldots,A_n)
\neq
E_r\bigl(\operatorname{Agg}_{F}(z_1^r,\ldots,z_n^r)\bigr).
\]
The latter may be a legitimate pre-state modeling choice, but it is not the state-level aggregation defined by the ALA calculus.

\begin{remark}[Scope of this subsection]
\label{rem:elicitation-scope}
The present subsection states only the discipline that protects the primitive roles. A fuller theory of role profiles, admissible elicitation families, traceable elicitation, profile-level fragility, and diagnostic auditing is a natural extension and is left to future work. Nothing in the formal development that follows depends on a particular elicitation route; the calculus operates on the state \(A\in\cA\) regardless of how its coordinates were obtained.
\end{remark}

\section{The Cognitive Order}
\label{sec:cognitive-order}

Section~\ref{sec:ala-state-space} introduced the ALA state as a semantic object. The present section turns that object into an ordered structure. The central point is the reversed polarity of epistemic restraint. Larger valuation and stronger contextual alignment make a state cognitively stronger, whereas larger restraint makes it less permissive for assertion or action. The order must therefore increase in \(\theta\) and \(\sigma\), but decrease in \(\gamma\).

\subsection{Order Definition and Alignment}
Let
\[
A=(\theta_A,\sigma_A,\gamma_A),\qquad
B=(\theta_B,\sigma_B,\gamma_B)
\]
be two ALA states. The cognitive order is defined as follows.

\begin{definition}[Cognitive order]
For \(A,B\in\cA\), write \(A\preceqA B\) if and only if
\begin{equation}
\theta_A\leq \theta_B,\qquad
\sigma_A\leq \sigma_B,\qquad
\gamma_A\geq \gamma_B.
\label{eq:cognitive-order}
\end{equation}
When \(A\preceqA B\), the state \(B\) is said to be cognitively at least as permissive as \(A\).
\end{definition}

When neither \(A\preceqA B\) nor \(B\preceqA A\) holds, the states \(A\) and \(B\) are called incomparable. In that case we write \(A\parallelA B\). We write \(A\precA B\), and say that \(B\) is strictly more permissive than \(A\), when \(A\preceqA B\) and \(A\neq B\).

The word permissive is used in an operational sense. A state is more permissive when it has larger interpretive valuation, stronger contextual alignment, and weaker epistemic restraint. This does not mean that a more permissive state is always preferable. Preference and action policy are task-dependent matters introduced later through admissible projections and decision interfaces. The cognitive order records structural direction before such a policy is imposed.

It is often useful to replace restraint by the order-aligned coordinate
\[
\rho_A=1-\gamma_A.
\]
The coordinate \(\rho_A\) measures released permissiveness rather than restraint. Define
\begin{equation}
\Psi(A)=(\theta_A,\sigma_A,1-\gamma_A)=(\theta_A,\sigma_A,\rho_A).
\label{eq:psi-map-section4}
\end{equation}
Then Eq.~\eqref{eq:cognitive-order} is equivalent to the ordinary coordinatewise order of \(\Psi(A)\) and \(\Psi(B)\) in \([0,1]^3\).

\begin{proposition}[Order alignment]
\label{prop:order-alignment}
For all \(A,B\in\cA\),
\[
A\preceqA B
\quad\Longleftrightarrow\quad
\Psi(A)\leq_{\mathrm{cw}}\Psi(B),
\]
where \(\leq_{\mathrm{cw}}\) denotes the usual coordinatewise order on \([0,1]^3\).
\end{proposition}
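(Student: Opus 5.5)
The plan is to unfold both sides into coordinate conditions and match them one at a time. By Definition of the cognitive order, Eq.~\eqref{eq:cognitive-order}, the relation \(A\preceqA B\) is the conjunction of three inequalities:
\[
\theta_A\leq\theta_B,\qquad \sigma_A\leq\sigma_B,\qquad \gamma_A\geq\gamma_B.
\]
By Eq.~\eqref{eq:psi-map-section4}, \(\Psi(A)=(\theta_A,\sigma_A,1-\gamma_A)\). The relation \(\Psi(A)\leq_{\mathrm{cw}}\Psi(B)\) is therefore the conjunction
\[
\theta_A\leq\theta_B,\qquad \sigma_A\leq\sigma_B,\qquad 1-\gamma_A\leq 1-\gamma_B.
\]
The first two conditions coincide verbatim on both sides, so the whole equivalence reduces to the single scalar fact
\[
\gamma_A\geq\gamma_B \iff 1-\gamma_A\leq 1-\gamma_B.
\]

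I will justify this by noting that \(t\mapsto 1-t\) is an order-reversing involution of \([0,1]\). Concretely, subtract \(\gamma_A+\gamma_B\) from both sides of \(\gamma_A\geq\gamma_B\) and multiply by \(-1\). Both steps are reversible, so the biconditional holds in both directions at once.

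It is also worth checking that \(\Psi\) maps \(\cA\) into \([0,1]^3\), since \(1-\gamma\in[0,1]\) whenever \(\gamma\in[0,1]\). This makes \(\leq_{\mathrm{cw}}\) meaningful on the image of \(\Psi\). I would add one remark: \(\Psi\) is a bijection of \([0,1]^3\) (it is its own inverse), so the proposition says \(\Psi\) is an order isomorphism from \((\cA,\preceqA)\) onto \(([0,1]^3,\leq_{\mathrm{cw}})\). That is what the later lattice results will use.

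There is no genuine obstacle here; the only point needing care is stating the reversal step for \(\gamma\) explicitly, rather than leaving it implicit.
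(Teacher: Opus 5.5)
Your proof is correct and is essentially the paper's argument: unfold $\Psi(A)\leq_{\mathrm{cw}}\Psi(B)$ into three coordinate inequalities, observe that the first two coincide with those of the cognitive order, and note that $1-\gamma_A\leq 1-\gamma_B$ is equivalent to $\gamma_A\geq\gamma_B$. One small slip in the concrete arithmetic: subtracting $\gamma_A+\gamma_B$ and then multiplying by $-1$ only returns you to $\gamma_B\leq\gamma_A$, so the steps should be to multiply by $-1$ and then add $1$, which your remark that $t\mapsto 1-t$ is order-reversing already justifies.
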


\begin{proof}
By definition, \(\Psi(A)\leq_{\mathrm{cw}}\Psi(B)\) means
\(\theta_A\leq\theta_B\), \(\sigma_A\leq\sigma_B\), and \(1-\gamma_A\leq 1-\gamma_B\). The last inequality is equivalent to \(\gamma_A\geq\gamma_B\). These are exactly the three inequalities in Eq.~\eqref{eq:cognitive-order}.
\end{proof}


\begin{figure}[!htbp]
\centering
\resizebox{0.72\textwidth}{!}{%
\begin{tikzpicture}[x=4.2cm,y=4.2cm,>=Latex]

  \ALAUnitCubeFrame

  \draw[Ctheta,-{Latex[length=2mm]},thick]
    \Pp{0}{0}{0} -- \Pp{1.08}{0}{0}
    node[below right,black,font=\footnotesize]{$\theta$ valuation};

  \draw[Csigma,-{Latex[length=2mm]},thick]
    \Pp{1}{0}{0} -- \Pp{1}{1.08}{0}
    node[right,black,font=\footnotesize]{$\sigma$ contextual alignment};

  \draw[Cgamma,-{Latex[length=2mm]},thick]
    \Pp{0}{0}{0} -- \Pp{0}{0}{1.08}
    node[left,black,font=\footnotesize]{$\gamma$ epistemic restraint};

  \node[
    draw,
    rounded corners,
    align=center,
    font=\scriptsize,
    inner sep=4pt,
    fill=white
  ] at \Pp{0.58}{0.90}{1.10}
  {$A\preceqA B$ iff\\
  \(\theta_A\le\theta_B,\ \sigma_A\le\sigma_B,\ \gamma_A\ge\gamma_B\)};

  \draw[Cgamma!80!black,thick,-{Latex[length=2mm]}]
    \Pp{-0.10}{0}{0.82} -- \Pp{-0.10}{0}{0.48};

  \node[Cgamma!80!black,font=\scriptsize,align=center]
    at \Pp{-0.25}{0}{0.65}
    {order\\decreases};

\end{tikzpicture}%
}

\caption{Cognitive order in the state space.}
\label{fig:ala-state-space-order}
\end{figure}

Fig.~\ref{fig:ala-state-space-order} shows the ALA state space in the original role coordinates. The first two coordinates are order-increasing, while epistemic restraint has reversed polarity: a larger \(\gamma\) means stronger restraint and therefore lower cognitive permissiveness. The order-alignment map \(\Psi(A)=(\theta_A,\sigma_A,1-\gamma_A)\) remains the proof device that converts this role-typed order into the ordinary coordinatewise order.

The least and greatest elements of \((\cA,\preceqA)\) are determined by the polarity of the three roles.

\begin{definition}[Bottom and top states]
The bottom and top states of \(\cA\) are
\begin{equation}
\bot_{\cA}=(0,0,1),
\qquad
\top_{\cA}=(1,1,0).
\label{eq:bottom-top-ala}
\end{equation}
\end{definition}

The bottom state has no valuation, no contextual alignment, and maximal restraint. The top state has maximal valuation, maximal contextual alignment, and no restraint. These states should be read structurally. They define the extreme boundaries of the ALA order; they are not empirical claims that real judgments must frequently attain these limits.

\begin{proposition}[Bounds]
For every \(A\in\cA\),
\[
\bot_{\cA}\preceqA A\preceqA \top_{\cA}.
\]
\end{proposition}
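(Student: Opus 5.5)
The plan is to verify the three defining inequalities of the cognitive order in Eq.~\eqref{eq:cognitive-order} directly. This uses only the fact that every coordinate of an ALA state lies in \([0,1]\). Fix an arbitrary \(A=(\theta_A,\sigma_A,\gamma_A)\in\cA\).

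For the lower bound \(\bot_{\cA}\preceqA A\), with \(\bot_{\cA}=(0,0,1)\), three inequalities must hold: \(0\leq\theta_A\), \(0\leq\sigma_A\), and \(1\geq\gamma_A\). Each follows immediately from \(\theta_A,\sigma_A,\gamma_A\in[0,1]\). Note that the third inequality is written in the reversed direction, as the restraint polarity requires.

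For the upper bound \(A\preceqA\top_{\cA}\), with \(\top_{\cA}=(1,1,0)\), the conditions are \(\theta_A\leq 1\), \(\sigma_A\leq 1\), and \(\gamma_A\geq 0\). These again hold because each coordinate lies in the unit interval.

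As an alternative route, one could invoke Proposition~\ref{prop:order-alignment}. Since \(\Psi(\bot_{\cA})=(0,0,0)\) and \(\Psi(\top_{\cA})=(1,1,1)\), the claim reduces to the statement that \((0,0,0)\leq_{\mathrm{cw}}\Psi(A)\leq_{\mathrm{cw}}(1,1,1)\). This holds because \(\Psi\) maps \(\cA\) into \([0,1]^3\), since \(1-\gamma_A\in[0,1]\).

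There is no real obstacle here. The only point requiring care is the reversed direction of the \(\gamma\)-inequality, so that the bottom carries \(\gamma=1\) and the top carries \(\gamma=0\), and not the other way round.
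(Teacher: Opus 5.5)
Your proof is correct and follows the same approach as the paper: both check the three defining inequalities of \(\preceqA\) directly against \(\bot_{\cA}=(0,0,1)\) and \(\top_{\cA}=(1,1,0)\), using only that every coordinate lies in \([0,1]\), with the \(\gamma\)-inequality taken in the reversed direction. Your alternative route through \(\Psi\) is also valid, but it adds nothing essential beyond the direct check.
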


\begin{proof}
For \(\bot_{\cA}\preceqA A\), we need \(0\leq\theta_A\), \(0\leq\sigma_A\), and \(1\geq\gamma_A\), all of which hold in \([0,1]^3\). For \(A\preceqA\top_{\cA}\), we need \(\theta_A\leq1\), \(\sigma_A\leq1\), and \(\gamma_A\geq0\), which also hold in \([0,1]^3\).
\end{proof}

\subsection{Lattice Structure}
The cognitive order induces two canonical binary operations. The join selects the least state that is at least as permissive as both inputs. The meet selects the greatest state that is no more permissive than both inputs.

\begin{definition}[Join and meet]
For \(A,B\in\cA\), define
\begin{equation}
A\vee_{\cA}B=
\bigl(\max\{\theta_A,\theta_B\},\max\{\sigma_A,\sigma_B\},\min\{\gamma_A,\gamma_B\}\bigr),
\label{eq:ala-join}
\end{equation}
\begin{equation}
A\wedge_{\cA}B=
\bigl(\min\{\theta_A,\theta_B\},\min\{\sigma_A,\sigma_B\},\max\{\gamma_A,\gamma_B\}\bigr).
\label{eq:ala-meet}
\end{equation}
\end{definition}

The operation \(\vee_{\cA}\) is permissive because it keeps the larger valuation, the stronger contextual alignment, and the weaker restraint. The operation \(\wedge_{\cA}\) is conservative because it keeps the smaller valuation, the weaker contextual alignment, and the stronger restraint. These descriptions are order-theoretic, not psychological. They specify how upper and lower bounds are constructed inside the ALA state space.

The geometry of these two operations is shown in Fig.~\ref{fig:meet-join-incomparability}. The example uses two incomparable states in which valuation and contextual alignment point in opposite directions; their meet and join are then obtained by applying the reversed polarity of \(\gamma\).


\begin{figure}[!htbp]
\centering
\begin{tikzpicture}[x=4.2cm,y=4.2cm,>=Latex]
  \ALAUnitCubeFrame
  \ALAUnitCubeAxes

  \def\ax{0.30}\def\ay{0.65}\def\az{0.70}
  \def\bx{0.70}\def\by{0.25}\def\bz{0.35}
  \def\mx{0.30}\def\my{0.25}\def\mz{0.70}
  \def\jx{0.70}\def\jy{0.65}\def\jz{0.35}

  \draw[Cproj!60,thin]
    \Pp{\mx}{\my}{\jz} -- \Pp{\jx}{\my}{\jz} -- \Pp{\jx}{\jy}{\jz} -- \Pp{\mx}{\jy}{\jz} -- cycle;
  \draw[Cproj!60,thin]
    \Pp{\mx}{\my}{\mz} -- \Pp{\jx}{\my}{\mz} -- \Pp{\jx}{\jy}{\mz} -- \Pp{\mx}{\jy}{\mz} -- cycle;
  \draw[Cproj!60,thin] \Pp{\mx}{\my}{\jz} -- \Pp{\mx}{\my}{\mz};
  \draw[Cproj!60,thin] \Pp{\jx}{\my}{\jz} -- \Pp{\jx}{\my}{\mz};
  \draw[Cproj!60,thin] \Pp{\jx}{\jy}{\jz} -- \Pp{\jx}{\jy}{\mz};
  \draw[Cproj!60,thin] \Pp{\mx}{\jy}{\jz} -- \Pp{\mx}{\jy}{\mz};

  \fill[Ctheta] \Pp{\ax}{\ay}{\az} circle (1.6pt)
    node[above left,black,font=\scriptsize]
    {$a=(0.30,0.65,0.70)$};
  \fill[Csigma] \Pp{\bx}{\by}{\bz} circle (1.6pt)
    node[below right,black,font=\scriptsize]
    {$b=(0.70,0.25,0.35)$};

  \fill[Cmeet] \Pp{\mx}{\my}{\mz} circle (1.8pt)
    node[above left,Cmeet,font=\scriptsize]
    {$a\meetA b=(0.30,0.25,0.70)$};
  \fill[Cjoin] \Pp{\jx}{\jy}{\jz} circle (1.8pt)
    node[below right,Cjoin,font=\scriptsize]
    {$a\joinA b=(0.70,0.65,0.35)$};

  \node[draw,rounded corners,align=center,font=\scriptsize,inner sep=4pt]
       at \Pp{0.50}{0.95}{1.08}
       {$a\parallel b$ because \(\theta\) and \(\sigma\) point in opposite directions\\
        \(\meetA=(\min\theta,\min\sigma,\max\gamma)\),\quad
        \(\joinA=(\max\theta,\max\sigma,\min\gamma)\)};
\end{tikzpicture}
\caption{Meet--join geometry under reversed restraint.}
\label{fig:meet-join-incomparability}
\end{figure}
In this example, \(a\parallelA b\) because \(a\) has smaller valuation but larger contextual alignment than \(b\). The meet keeps the lower valuation, lower contextual alignment, and stronger restraint, whereas the join keeps the higher valuation, higher contextual alignment, and weaker restraint.

\begin{proposition}[Least upper bound and greatest lower bound]
For all \(A,B\in\cA\), \(A\vee_{\cA}B\) is the least upper bound of \(\{A,B\}\) and \(A\wedge_{\cA}B\) is the greatest lower bound of \(\{A,B\}\) with respect to \(\preceqA\).
\end{proposition}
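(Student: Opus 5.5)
The plan is to transport the problem through the order-alignment map \(\Psi(A)=(\theta_A,\sigma_A,1-\gamma_A)\) of Proposition~\ref{prop:order-alignment}. There \(\Psi\) is a bijection of \([0,1]^3\) onto itself, an involution up to relabelling, and an order isomorphism from \((\cA,\preceqA)\) onto \(([0,1]^3,\leq_{\mathrm{cw}})\). Least upper bounds and greatest lower bounds are preserved and reflected by order isomorphisms. It is therefore enough to do two things: identify the coordinatewise supremum and infimum in \([0,1]^3\), and check that \(\Psi\) carries \(A\vee_{\cA}B\) and \(A\wedge_{\cA}B\) to them.

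The first key step is the identity \(1-\min\{\gamma_A,\gamma_B\}=\max\{1-\gamma_A,1-\gamma_B\}\), together with its dual \(1-\max\{\gamma_A,\gamma_B\}=\min\{1-\gamma_A,1-\gamma_B\}\). These follow because \(t\mapsto 1-t\) is order-reversing. They give
\[
\Psi(A\vee_{\cA}B)=\Psi(A)\vee_{\mathrm{cw}}\Psi(B)
\quad\text{and}\quad
\Psi(A\wedge_{\cA}B)=\Psi(A)\wedge_{\mathrm{cw}}\Psi(B),
\]
where \(\vee_{\mathrm{cw}}\) and \(\wedge_{\mathrm{cw}}\) denote coordinatewise max and min.

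The second key step is the standard fact that coordinatewise max is the least upper bound in a product of chains. I would still verify it directly in the original coordinates so the proof stays self-contained.

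For the upper-bound property, \(\theta_A\le\max\{\theta_A,\theta_B\}\) and \(\sigma_A\le\max\{\sigma_A,\sigma_B\}\). For restraint, \(\gamma_A\ge\min\{\gamma_A,\gamma_B\}\). Hence \(A\preceqA A\vee_{\cA}B\), and symmetrically \(B\preceqA A\vee_{\cA}B\).

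For leastness, let \(C\) satisfy \(A\preceqA C\) and \(B\preceqA C\). Then \(\theta_C\) bounds both \(\theta_A\) and \(\theta_B\), so it bounds their max, and the same holds for \(\sigma_C\). Likewise \(\gamma_C\le\gamma_A\) and \(\gamma_C\le\gamma_B\), so \(\gamma_C\le\min\{\gamma_A,\gamma_B\}\). This is exactly \(A\vee_{\cA}B\preceqA C\).

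The meet is handled dually, by swapping max and min throughout. I would also note that both constructed triples lie in \([0,1]^3\), since max and min of elements of \([0,1]\) stay in \([0,1]\).

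No step is a real obstacle. The only point needing care is the reversed third coordinate: the join must take the \emph{minimum} of restraint and the meet the \emph{maximum}. The leastness argument for \(\gamma\) needs the inequality directions kept straight, namely that an upper bound has smaller \(\gamma\). I expect that bookkeeping to be the one place an error could slip in, and routing the argument through \(\Psi\) removes it mechanically.
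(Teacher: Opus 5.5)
Your proposal is correct and follows the paper's own route. Like the paper, you transport the problem through \(\Psi\), use the fact that coordinatewise max and min are the supremum and infimum in \(([0,1]^3,\leq_{\mathrm{cw}})\), and observe that the reversed third coordinate turns the max of \(1-\gamma\) into the min of \(\gamma\) (and dually); your added direct check in the original coordinates is a harmless redundancy the paper omits.
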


\begin{proof}
Apply the order-alignment map \(\Psi\). In \([0,1]^3\) with the ordinary coordinatewise order, the least upper bound is obtained by componentwise maximum and the greatest lower bound is obtained by componentwise minimum. Since \(\Psi(A)=(\theta_A,\sigma_A,1-\gamma_A)\), componentwise maximum in the third order-aligned coordinate corresponds to the minimum of \(\gamma\), and componentwise minimum corresponds to the maximum of \(\gamma\). This gives Eqs.~\eqref{eq:ala-join} and \eqref{eq:ala-meet}.
\end{proof}

The construction above yields more than binary bounds: the induced structure is a bounded distributive lattice. 

\begin{theorem}[Bounded distributive lattice]
The structure
\[
(\cA,\vee_{\cA},\wedge_{\cA},\bot_{\cA},\top_{\cA})
\]
is a bounded distributive lattice.
\end{theorem}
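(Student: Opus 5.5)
The plan is to transport the whole structure along the order-alignment map \(\Psi\) of Eq.~\eqref{eq:psi-map-section4}, so that every lattice law is read off from the ordinary coordinatewise structure on \([0,1]^3\). First, \(\Psi:\cA\to[0,1]^3\), \(\Psi(\theta,\sigma,\gamma)=(\theta,\sigma,1-\gamma)\), is a bijection, since it is its own inverse. By Proposition~\ref{prop:order-alignment} it is an order isomorphism from \((\cA,\preceqA)\) onto \(([0,1]^3,\leq_{\mathrm{cw}})\). In particular \(\preceqA\) is a partial order: reflexivity, antisymmetry and transitivity pull back from \(\leq_{\mathrm{cw}}\).

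Second, I will check directly that \(\Psi\) carries the ALA operations to componentwise max and min. The key identities are \(1-\min\{\gamma_A,\gamma_B\}=\max\{1-\gamma_A,1-\gamma_B\}\) and its dual. Together with Eqs.~\eqref{eq:ala-join} and \eqref{eq:ala-meet}, they give
\[
\Psi(A\vee_{\cA}B)=\Psi(A)\vee_{\mathrm{cw}}\Psi(B)
\qquad\text{and}\qquad
\Psi(A\wedge_{\cA}B)=\Psi(A)\wedge_{\mathrm{cw}}\Psi(B).
\]
The map also sends the distinguished elements correctly: \(\Psi(\bot_{\cA})=(0,0,0)\) and \(\Psi(\top_{\cA})=(1,1,1)\). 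Hence \(\Psi\) is an isomorphism of algebras of type \((2,2,0,0)\). The preceding proposition on least upper and greatest lower bounds already shows that the binary operations are the order-theoretic join and meet. The Bounds proposition shows that \(\bot_{\cA}\) and \(\top_{\cA}\) are the least and greatest elements.

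Third, I will establish that \(([0,1]^3,\max,\min,(0,0,0),(1,1,1))\) is a bounded distributive lattice. The interval \([0,1]\) with \(\max,\min\) is a chain, and every chain is distributive. To verify \(\min\{a,\max\{b,c\}\}=\max\{\min\{a,b\},\min\{a,c\}\}\), split into cases by the relative position of \(a\) among \(b,c\). Finite direct products preserve the lattice identities, the distributive law and boundedness, because all of these are equational and hold coordinatewise. Alternatively, this step is just Proposition~\ref{prop:typed-product-lattice-inheritance} with \(L_\theta=L_\sigma=L_\gamma=[0,1]\). Noting that \(L_\gamma^{\mathrm{op}}\) is \([0,1]\) with the reversed order, that proposition yields the claim for \(\cA\) directly, after matching its join and meet with Eqs.~\eqref{eq:ala-join}--\eqref{eq:ala-meet}.

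Finally, identities are preserved under isomorphism. For instance, \(A\wedge_{\cA}(B\vee_{\cA}C)\) and \((A\wedge_{\cA}B)\vee_{\cA}(A\wedge_{\cA}C)\) have equal images under the injective \(\Psi\), so they are equal. The same argument transfers idempotence, commutativity, associativity, absorption, and the bound laws. There is no real obstacle here. The only place needing care is the third coordinate: one must confirm that the reversed polarity makes min and max swap consistently in both operations and in the bounds, rather than only in one of them. The identity \(1-\min=\max(1-\cdot)\) handles all three uniformly.
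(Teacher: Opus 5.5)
Your proposal is correct and follows the paper's proof. Both transport the bounded distributive lattice structure of \(([0,1]^3,\leq_{\mathrm{cw}})\) back along the order isomorphism \(\Psi(A)=(\theta_A,\sigma_A,1-\gamma_A)\) and inherit distributivity through that isomorphism; your explicit check that \(\Psi\) sends \(\vee_{\cA},\wedge_{\cA}\) to componentwise max and min, and \(\bot_{\cA},\top_{\cA}\) to \((0,0,0),(1,1,1)\), spells out what the paper compresses into ``pulling this lattice structure back through \(\Psi\).''
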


\begin{proof}
The map \(\Psi:\cA\to[0,1]^3\), defined by Eq.~\eqref{eq:psi-map-section4}, is a bijection. By Proposition~\ref{prop:order-alignment}, it is an order-isomorphism from \((\cA,\preceqA)\) to \(([0,1]^3,\leq_{\mathrm{cw}})\). The coordinatewise ordered cube \([0,1]^3\), equipped with componentwise maximum and minimum, is a bounded distributive lattice with bottom \((0,0,0)\) and top \((1,1,1)\). Pulling this lattice structure back through \(\Psi\) gives Eqs.~\eqref{eq:bottom-top-ala}, \eqref{eq:ala-join}, and \eqref{eq:ala-meet}. Since distributivity is preserved by order-isomorphism, \(\cA\) is a bounded distributive lattice.
\end{proof}

Equivalently, distributivity can be checked coordinate by coordinate from the distributive identities of \(\max\) and \(\min\) on \([0,1]\). The order-isomorphism proof is preferable because it makes the role of reversed restraint polarity explicit.

Fig.~\ref{fig:meet-join-incomparability} illustrates the same formulas geometrically. The point is not merely that meet and join are coordinatewise; it is that their third-coordinate behavior is coordinatewise only after respecting the reversed polarity of epistemic restraint.

\subsection{The Interpretive Complement}
ALA also needs a complement operation, but the complement must respect the semantic typing of the coordinates. The complement of an evaluative claim reverses the valuation of the claim. It does not reverse the contextual alignment of the assessment, and it does not reverse the restraint under which the assessment is held.

\begin{definition}[Interpretive complement]
For an ALA state \(A=(\theta_A,\sigma_A,\gamma_A)\), the interpretive complement, taken at fixed context and fixed commitment interface, is
\begin{equation}
A^{c}=(1-\theta_A,\sigma_A,\gamma_A).
\label{eq:ala-complement}
\end{equation}
If negating the predicate changes the intended context or operational interface, the resulting object is a new ALA assessment rather than the complement of the original state.
\end{definition}

Thus, if \(A\) represents the assessment that an object belongs to a concept, \(A^c\) represents the complementary valuation of that concept under the same contextual alignment and the same epistemic restraint. For instance, if an image is assessed as \textsc{Tumor-Like} with a certain contextual alignment and a certain restraint, the complementary assessment \textsc{Not Tumor-Like} reverses the valuation coordinate. The imaging context and the epistemic restraint associated with the medical stakes do not automatically become their opposites.

\begin{proposition}[Basic properties of the interpretive complement]
For every \(A\in\cA\),
\[
(A^c)^c=A.
\]
Moreover, \(A^c\in\cA\). The operation preserves contextual alignment and epistemic restraint.
\end{proposition}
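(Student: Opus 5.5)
The plan is to compute directly from the definition of the interpretive complement in Eq.~\eqref{eq:ala-complement}, treating each coordinate according to its role. This is a routine verification, so the proof will have three short steps, one for each claim in the statement.

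First, I would show closure, \(A^c\in\cA\). Since \(\theta_A\in[0,1]\), we have \(1-\theta_A\in[0,1]\), because the map \(t\mapsto 1-t\) sends \([0,1]\) onto itself. The second and third coordinates \(\sigma_A\) and \(\gamma_A\) are copied unchanged, so they already lie in \([0,1]\). Hence \(A^c\in[0,1]^3=\cA\).

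Second, I would prove involutivity. Applying the definition to \(A^c=(1-\theta_A,\sigma_A,\gamma_A)\) gives
\[
(A^c)^c=\bigl(1-(1-\theta_A),\sigma_A,\gamma_A\bigr)=(\theta_A,\sigma_A,\gamma_A)=A.
\]
Third, preservation of contextual alignment and epistemic restraint is immediate from the definition: \(\sigma_{A^c}=\sigma_A\) and \(\gamma_{A^c}=\gamma_A\).

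There is no real obstacle here. The only point I would make explicit is that preservation is coordinatewise identity on the second and third roles, not merely order-preservation. I would also add a brief remark on the order: \(A\mapsto A^c\) is order-reversing in \(\theta\) only. It is therefore not a lattice complement in \((\cA,\preceqA)\), and it should not be read as \(A\vee_{\cA}A^c=\top_{\cA}\). This avoids an unintended Boolean reading, but it is not needed for the stated proposition.
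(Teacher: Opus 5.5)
Your proof is correct and follows the paper's argument essentially line for line. You get closure from \(1-\theta_A\in[0,1]\) with \(\sigma_A,\gamma_A\) copied unchanged, involutivity from the direct computation \((1-(1-\theta_A),\sigma_A,\gamma_A)=A\), and preservation of \(\sigma\) and \(\gamma\) read off from Eq.~\eqref{eq:ala-complement}; your closing remark on the non-Boolean behaviour is consistent with the paper's subsequent observation that the complement is neither monotone nor order-reversing for \(\preceqA\) and that \(A\vee_{\cA}A^c\neq\top_{\cA}\) in general.
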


\begin{proof}
Since \(\theta_A\in[0,1]\), also \(1-\theta_A\in[0,1]\). The other two coordinates are unchanged and remain in \([0,1]\), so \(A^c\in\cA\). Applying the operation twice gives
\((A^c)^c=(1-(1-\theta_A),\sigma_A,\gamma_A)=A\). Preservation of \(\sigma\) and \(\gamma\) follows directly from Eq.~\eqref{eq:ala-complement}.
\end{proof}

The following remark gives a concrete order-theoretic check of this point. It shows that the complement reverses the valuation direction while leaving the contextual-alignment and restraint roles in place; consequently, a comparable pair may become incomparable after complementation.

\begin{remark}[The interpretive complement may break comparability]
The interpretive complement \(A^c=(1-\theta,\sigma,\gamma)\) reverses only the valuation coordinate and preserves contextual alignment and epistemic restraint. Hence it is neither a monotone nor an order-reversing map with respect to the ALA cognitive order. For example, let
\[
a=(0.25,0.40,0.70),\qquad b=(0.55,0.60,0.50).
\]
Then \(a\preceqA b\), since
\[
0.25\le 0.55,\qquad 0.40\le 0.60,\qquad 0.70\ge 0.50.
\]
However,
\[
a^c=(0.75,0.40,0.70),\qquad b^c=(0.45,0.60,0.50),
\]
and the complemented pair has mixed coordinate directions. The valuation coordinate favors \(a^c\), while contextual alignment and the reversed restraint order favor \(b^c\). Therefore,
\[
a^c\parallelA b^c.
\]
Thus the interpretive complement is not a De Morgan negation, not a lattice complement, and not an order-reversing operation.
\end{remark}

The interpretive complement is not a Boolean lattice complement. In general,
\[
A\vee_{\cA}A^c\neq \top_{\cA},
\qquad
A\wedge_{\cA}A^c\neq \bot_{\cA}.
\]
This is intended. ALA separates the truth-direction of the evaluated concept from the contextual and epistemic conditions under which the evaluation is made. Negating the concept changes valuation; it does not negate the context and it does not negate restraint.

A full Boolean complement would require contextual alignment to become \(1-\sigma\) and restraint to become \(1-\gamma\), or some analogous reversal. That would be semantically wrong in ALA. Poor contextual alignment is not the complement of good contextual alignment with respect to the negated concept. High restraint is not the complement of low restraint with respect to the negated concept. Both are features of the assessment situation, not membership and non-membership degrees.

The invariance of restraint under complementation rests on a stake-symmetry. Operational risk belongs to the commitment interface, that is, to which action is contemplated, and not to the truth-direction of the predicate. Acting on a proposition and acting on its negation can both be high-stakes: judging a lesion ``tumor-like'' may license a risky intervention, while judging it ``not tumor-like'' may license a risky discharge. Permission-to-act therefore does not track the valuation coordinate, and negating the predicate, which holds the contemplated action fixed, does not by itself alter the restraint that should govern commitment. Restraint may of course differ across different actions; but that variation comes from a change of commitment interface, not from the logical complement. A coordinate that stays fixed under predicate-negation, while valuation flips, is by that behavior not a truth-status component, which is the formal counterpart of the semantic claim that restraint is neither non-membership nor indeterminacy.

\begin{remark}[Distinction from non-membership and hesitation]
The complement in Eq.~\eqref{eq:ala-complement} also fixes the status of the third coordinate: \(\gamma\) is not non-membership and not hesitation. If it were non-membership, complement would naturally exchange it with membership-like valuation. If it were hesitation, it would belong to a different mass-balance logic. In ALA, \(\gamma\) measures epistemic restraint. It is therefore preserved under interpretive complementation.
\end{remark}

\section{Scalar Projection as a Delayed Interface}
\label{sec:scalar-projection}

The preceding sections define ALA states as triadic objects and equip them with a cognitive order. Nevertheless, many external procedures still require a scalar output. A journal reviewer may ask for a single membership value, a decision model may require a one-dimensional score, a ranking procedure may need a numerical index, or an existing fuzzy system may accept only a scalar membership grade. ALA does not deny the practical usefulness of such scalar outputs. It rejects only the assumption that the scalar output is the primitive representation of the judgment.

This section formalizes scalar projection as a delayed interface. Projection is applied after the triadic state has been formed, interpreted, ordered, and, when needed, operated upon. The principle is therefore not ``avoid projection'', but rather ``do not project before the semantic roles of the judgment have been preserved''.

\begin{remark}[Rolewise standardization before cross-role projection]
\label{rem:rolewise-standardization}
For a generalized state in Eq.~\eqref{eq:general-ala-carrier}, two interfaces must be distinguished. A \emph{rolewise standardization interface} has the product form
\begin{equation}
\mathcal R
=
\rho_\theta\times\rho_\sigma\times\rho_\gamma:
\mathcal A^{\star}\longrightarrow\cA=[0,1]^3,
\label{eq:rolewise-standardization}
\end{equation}
where each \(\rho_r\) reads only the native carrier of its own role and is order-preserving in that carrier. It does not mix valuation, contextual alignment, and restraint. By contrast, a scalar projection \(\phi:\cA\to[0,1]\) is a cross-role decision or reporting interface that deliberately combines the standardized roles.

For a structured carrier, \(\rho_r\) may itself factor as
\[
\rho_r=q_r\circ N_r,
\]
where \(N_r\) is a structure-preserving normalization within the native family and \(q_r\) is a declared scalar readout of that normalized role value. An interval may first be normalized as an interval, and another structured role value may first be normalized within its own declared carrier; only the readout \(q_r\) reduces that internal role structure to one value in \([0,1]\). Such reduction is therefore not hidden inside the word ``normalization''. Any reliability, hesitation, or other internal qualification used within a carrier must remain a qualification of that role and must not silently take over the function of contextual alignment or restraint.

The generalized processing order is
\[
(\mathcal A^{\star})^n
\xrightarrow{\text{native role-local reasoning}}
\mathcal A^{\star}
\xrightarrow{\mathcal R}
\cA
\xrightarrow{\phi}
[0,1].
\]
Native reasoning should normally precede \(\mathcal R\), because standardizing or reducing each input first may discard interval, fuzzy, hesitant, reliability, or other internal information before it participates in the calculus. The complete theory of admissible \(\mathcal R\), including bound-valued standardizations and commutation conditions with native operations, is outside the scope of this paper. Equation~\eqref{eq:rolewise-standardization} locates the unit-cube calculus developed here as the standard interface between heterogeneous role carriers and the delayed scalar projections defined below.
\end{remark}

\subsection{Admissible Scalar Projection}
A scalar projection is not an ALA state, not a universal truth value, and not a replacement for triadic judgment state. It is a task-dependent scalar readout of a state whose semantic structure remains available before and after projection. Two consequences follow immediately.

First, different projections may be appropriate for different tasks. A neutral descriptive report may use a balanced projection, whereas a high-stakes intervention may assign greater importance to contextual alignment and epistemic restraint. Second, equality after projection does not imply equality of ALA states. The latter issue is central enough to be treated separately in Section~\ref{sec:projection-equivalence}.

\begin{principle}[Delayed scalar interface]
A scalar projection in ALA is an interface applied to a preserved triadic state. At minimum, it must respect the boundary states and the cognitive order. Strictly admissible projections additionally make every strict cognitive improvement visible at the scalar interface. Projection is not the primitive representation of membership-based judgment.
\end{principle}

The admissibility of a projection is not a matter of arbitrary preference. A scalar interface must respect the semantic geometry established by the cognitive order. Two levels are useful. Weak admissibility covers order-compatible practical interfaces, including thresholded or plateaued readouts. Strict admissibility is the stronger class required by the separation and non-collapse results developed below.

\begin{definition}[Boundary-grounded scalar projection]
A scalar projection \(\phi:\cA\to[0,1]\) is boundary-grounded if
\[
\phi(\bot_{\cA})=0,
\qquad
\phi(\top_{\cA})=1,
\]
where \(\bot_{\cA}=(0,0,1)\) and \(\top_{\cA}=(1,1,0)\).
\end{definition}

\begin{definition}[Weakly admissible scalar projection]
A mapping \(\phi:\cA\to[0,1]\) is weakly admissible if it is continuous, boundary-grounded, and isotone with respect to the ALA cognitive order:
\[
A\preceqA B
\quad\Longrightarrow\quad
\phi(A)\leq\phi(B).
\]
The set of all weakly admissible projections is denoted by
\(\cPhi_{\mathrm{wadm}}\).
\end{definition}

\begin{definition}[Strict isotonicity]
A scalar projection \(\phi:\cA\to[0,1]\) is strictly isotone with respect to the ALA cognitive order if, for every \(A,B\in\cA\),
\[
A\precA B
\quad\Longrightarrow\quad
\phi(A)<\phi(B).
\]
\end{definition}

\begin{definition}[Strictly admissible scalar projection]
A weakly admissible projection is strictly admissible if it is strictly isotone. The set of all strictly admissible scalar projections is denoted by \(\cPhi_{\mathrm{adm}}\). Unless explicitly qualified otherwise, ``admissible projection'' in the remainder of this paper means strictly admissible projection.
\end{definition}

Continuity prevents artificial discontinuities in the scalar interface. Boundary grounding fixes the interpretation of complete absence and complete presence in the order-aligned sense. Weak isotonicity prevents order reversal, while strict isotonicity ensures that every genuine cognitive improvement, with no deterioration in another role, remains visible at the scalar interface.

\subsection{Projection Families and Scalar Readouts}
The most transparent admissible projections are obtained by using the order-aligned coordinate
\[
\rho_A=1-\gamma_A.
\]
In the coordinates \((\theta,\sigma,\rho)\), all three components increase in the direction of cognitive permissiveness. Positive weighted combinations of these order-aligned coordinates therefore provide a natural family of admissible scalar interfaces.

Let
\[
\Delta_+^2=\left\{w=(w_\theta,w_\sigma,w_\rho)\in(0,1)^3:
 w_\theta+w_\sigma+w_\rho=1\right\}
\]
be the relative interior of the three-dimensional probability simplex. For \(w\in\Delta_+^2\), define
\begin{equation}
\phi_w(A)
= w_\theta\theta_A+w_\sigma\sigma_A+w_\rho(1-\gamma_A).
\label{eq:weighted-order-aligned-projection}
\end{equation}

\begin{proposition}[Admissibility of positive weighted order-aligned projections]
For every \(w\in\Delta_+^2\), the mapping \(\phi_w\) defined by Eq.~\eqref{eq:weighted-order-aligned-projection} is an admissible scalar projection.
\end{proposition}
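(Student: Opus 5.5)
The plan is to verify the four defining requirements of strict admissibility one at a time: range in \([0,1]\) with continuity, boundary grounding, weak isotonicity, and strict isotonicity. All four become transparent once \(\phi_w\) is rewritten through the order-alignment map \(\Psi\) of Eq.~\eqref{eq:psi-map-section4}. Indeed, \(\phi_w(A)=\langle w,\Psi(A)\rangle\), so \(\phi_w\) is the composition of the affine bijection \(\Psi\) with a linear functional whose coefficients are strictly positive. I will prove each property for this linear functional on \(([0,1]^3,\leq_{\mathrm{cw}})\) and transfer it back through Proposition~\ref{prop:order-alignment}.

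\textbf{Range and continuity.} Since \(\theta_A,\sigma_A,1-\gamma_A\in[0,1]\) and \(w\) lies in the simplex, \(\phi_w(A)\) is a convex combination of numbers in \([0,1]\), so it lies in \([0,1]\). Continuity is immediate because \(\phi_w\) is affine in \((\theta,\sigma,\gamma)\).

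\textbf{Boundary grounding.} We have \(\Psi(\bot_{\cA})=(0,0,0)\) and \(\Psi(\top_{\cA})=(1,1,1)\). Hence \(\phi_w(\bot_{\cA})=0\) and \(\phi_w(\top_{\cA})=w_\theta+w_\sigma+w_\rho=1\).

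\textbf{Weak isotonicity.} If \(A\preceqA B\), Proposition~\ref{prop:order-alignment} gives \(\Psi(A)\leq_{\mathrm{cw}}\Psi(B)\). Then
\[
\phi_w(B)-\phi_w(A)=w_\theta(\theta_B-\theta_A)+w_\sigma(\sigma_B-\sigma_A)+w_\rho(\gamma_A-\gamma_B),
\]
and this is a sum of products of nonnegative weights with nonnegative differences, so \(\phi_w(A)\leq\phi_w(B)\).

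\textbf{Strict isotonicity.} This is the only step that needs any care, and it is where the strict positivity of the weights enters. Suppose \(A\precA B\), so \(A\preceqA B\) and \(A\neq B\). Since \(\Psi\) is injective, \(\Psi(A)\neq\Psi(B)\). Therefore at least one of the three nonnegative differences \(\theta_B-\theta_A\), \(\sigma_B-\sigma_A\), \(\gamma_A-\gamma_B\) is strictly positive. It is multiplied by a weight in \((0,1)\), so the displayed sum is strictly positive. Thus \(\phi_w(A)<\phi_w(B)\).

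It is worth noting in the write-up that this strictness argument fails exactly when some weight vanishes, that is, on the boundary of the simplex. In that case \(\phi_w\) is only weakly admissible, which explains why \(w\) is restricted to the relative interior \(\Delta_+^2\). Combining the four properties shows \(\phi_w\in\cPhi_{\mathrm{adm}}\).
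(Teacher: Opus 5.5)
Your proof is correct and follows essentially the same route as the paper: continuity from affinity, boundary grounding by direct evaluation at $(0,0,1)$ and $(1,1,0)$, and strict isotonicity from strictly positive weights multiplying nonnegative order-aligned differences, at least one of which is strict. Your explicit checks that the values lie in $[0,1]$ and that weak isotonicity holds, and your remark that weights on the simplex boundary give only weak admissibility, are correct details that the paper leaves implicit.
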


\begin{proof}
Continuity is immediate because \(\phi_w\) is affine in the coordinates of \(A\). Boundary grounding follows from
\[
\phi_w(0,0,1)=0,
\qquad
\phi_w(1,1,0)=w_\theta+w_\sigma+w_\rho=1.
\]
It remains to verify strict isotonicity. Suppose \(A\precA B\). Then
\[
\theta_A\leq\theta_B,
\qquad
\sigma_A\leq\sigma_B,
\qquad
1-\gamma_A\leq1-\gamma_B,
\]
and at least one of these inequalities is strict. Since all weights are strictly positive,
\[
\phi_w(B)-\phi_w(A)
=w_\theta(\theta_B-\theta_A)+w_\sigma(\sigma_B-\sigma_A)+w_\rho\big((1-\gamma_B)-(1-\gamma_A)\big)>0.
\]
Thus \(\phi_w(A)<\phi_w(B)\), and \(\phi_w\) is strictly isotone.
\end{proof}

The affine family in Eq.~\eqref{eq:weighted-order-aligned-projection} is not the only possible admissible family. It is useful because it is interpretable and easy to report. A more general family can be obtained by applying strictly increasing coordinate response functions before aggregation.

\begin{definition}[Response-adjusted admissible projection]
Let \(w\in\Delta_+^2\), and let \(\eta_\theta,\eta_\sigma,\eta_\rho:[0,1]\to[0,1]\) be continuous strictly increasing functions satisfying \(\eta_i(0)=0\) and \(\eta_i(1)=1\). Define
\begin{equation}
\phi_{w,\eta}(A)
= w_\theta\eta_\theta(\theta_A)
+w_\sigma\eta_\sigma(\sigma_A)
+w_\rho\eta_\rho(1-\gamma_A).
\label{eq:response-adjusted-projection}
\end{equation}
Then \(\phi_{w,\eta}\) is called a response-adjusted order-aligned projection.
\end{definition}

\begin{proposition}[Admissibility of response-adjusted projections]
Every response-adjusted order-aligned projection \(\phi_{w,\eta}\) defined by Eq.~\eqref{eq:response-adjusted-projection} is admissible.
\end{proposition}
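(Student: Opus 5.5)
The plan is to mirror the proof of the admissibility of the positive weighted order-aligned projections $\phi_w$. We check the four defining properties of a strictly admissible projection in turn: range in $[0,1]$, continuity, boundary grounding, and strict isotonicity. The argument works throughout in the order-aligned coordinates $(\theta,\sigma,\rho)$ with $\rho=1-\gamma$. By Proposition~\ref{prop:order-alignment}, these turn the cognitive order into the ordinary coordinatewise order.

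First, range and continuity. Each $\eta_i$ maps $[0,1]$ into $[0,1]$, and $w\in\Delta_+^2$. Hence $\phi_{w,\eta}(A)$ is a convex combination of numbers in $[0,1]$, so it lies in $[0,1]$ and the map is well defined as $\cA\to[0,1]$. Continuity follows because $\phi_{w,\eta}$ is a finite sum of continuous functions $\eta_i$ composed with the continuous coordinate maps $A\mapsto\theta_A$, $A\mapsto\sigma_A$, and $A\mapsto 1-\gamma_A$.

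Second, boundary grounding. At $\bot_{\cA}=(0,0,1)$ the order-aligned coordinates are $(0,0,0)$. Using $\eta_i(0)=0$, this gives $\phi_{w,\eta}(\bot_{\cA})=0$. At $\top_{\cA}=(1,1,0)$ they are $(1,1,1)$. Using $\eta_i(1)=1$, this gives $\phi_{w,\eta}(\top_{\cA})=w_\theta+w_\sigma+w_\rho=1$.

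The only step requiring care is strict isotonicity, which also yields weak isotonicity. Suppose $A\precA B$. By Proposition~\ref{prop:order-alignment}, $\theta_A\le\theta_B$, $\sigma_A\le\sigma_B$, and $1-\gamma_A\le 1-\gamma_B$. Since $A\neq B$, and $\Psi$ is a bijection, at least one of these inequalities is strict. Each $\eta_i$ is strictly increasing, so every term
\[
w_\theta\bigl(\eta_\theta(\theta_B)-\eta_\theta(\theta_A)\bigr),\qquad
w_\sigma\bigl(\eta_\sigma(\sigma_B)-\eta_\sigma(\sigma_A)\bigr),\qquad
w_\rho\bigl(\eta_\rho(1-\gamma_B)-\eta_\rho(1-\gamma_A)\bigr)
\]
is nonnegative, and the term for a strictly improved coordinate is strictly positive. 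Strictness of $\eta_i$ alone would not suffice here; we also need strict positivity of the weights, which membership in the open simplex provides. Summing the three terms gives $\phi_{w,\eta}(A)<\phi_{w,\eta}(B)$. Strict isotonicity implies weak isotonicity, since $A\preceqA B$ means either $A=B$ or $A\precA B$. Therefore $\phi_{w,\eta}\in\cPhi_{\mathrm{adm}}$.

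Taking every $\eta_i$ to be the identity recovers the earlier result for $\phi_w$ as a special case, which gives a useful consistency check.
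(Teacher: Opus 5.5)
Your proof is correct and follows essentially the same route as the paper. Continuity comes from the continuity of the $\eta_i$, boundary grounding from $\eta_i(0)=0$, $\eta_i(1)=1$ and $w_\theta+w_\sigma+w_\rho=1$, and strict isotonicity from the strictly increasing $\eta_i$ together with strictly positive weights; your explicit checks that the range lies in $[0,1]$ and that weak isotonicity follows from strict isotonicity are details the paper leaves implicit.
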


\begin{proof}
Continuity follows from the continuity of the coordinate response functions. Boundary grounding follows from \(\eta_i(0)=0\), \(\eta_i(1)=1\), and \(w_\theta+w_\sigma+w_\rho=1\). If \(A\precA B\), then the order-aligned coordinates of \(B\) are coordinatewise no smaller than those of \(A\), with at least one strict improvement. Since the response functions are strictly increasing and all weights are positive, the weighted sum strictly increases. Hence \(\phi_{w,\eta}(A)<\phi_{w,\eta}(B)\).
\end{proof}

A canonical neutral choice is the balanced projection
\begin{equation}
\phi_{\mathrm{bal}}(A)
=\frac{\theta_A+\sigma_A+1-\gamma_A}{3}.
\label{eq:balanced-projection}
\end{equation}
This projection assigns equal importance to interpretive valuation, contextual alignment, and the order-aligned restraint coordinate. It is useful when no application-specific weighting policy is available.

Weighted projections encode stated interface policies. For example, a context-sensitive reporting policy may choose \(w_\sigma>w_\theta\), whereas a high-stakes intervention policy may choose a larger \(w_\rho\), because high restraint reduces \(1-\gamma\) and therefore lowers the projected scalar. In this way, the projection can express a conservative interface without redefining the ALA state itself.

\begin{example}[Balanced and conservative projections]
Let
\[
A=(0.80,0.60,0.70),
\qquad
B=(0.70,0.85,0.30).
\]
The balanced projections are
\[
\phi_{\mathrm{bal}}(A)=\frac{0.80+0.60+0.30}{3}=0.5667,
\qquad
\phi_{\mathrm{bal}}(B)=\frac{0.70+0.85+0.70}{3}=0.7500.
\]
Although \(A\) has the larger valuation, its weaker contextual alignment and stronger restraint lower its scalar interface. A more conservative projection with \(w=(0.25,0.25,0.50)\) gives
\[
\phi_w(A)=0.25(0.80)+0.25(0.60)+0.50(0.30)=0.5000,
\]
whereas
\[
\phi_w(B)=0.25(0.70)+0.25(0.85)+0.50(0.70)=0.7375.
\]
The ranking under this interface is not caused by valuation alone. It reflects the stated scalar policy applied to the preserved triadic states.
\end{example}

The pure valuation readout
\[
\phi_\theta(A)=\theta_A
\]
may be useful as a diagnostic display of the first coordinate. It is weakly admissible, because it is continuous, boundary-grounded, and isotone, but it is not strictly admissible: it ignores improvements in contextual alignment and epistemic restraint and therefore fails strict isotonicity.

\begin{proposition}[Weak but not strict admissibility of pure valuation]
The mapping \(\phi_\theta(A)=\theta_A\) is weakly admissible but not strictly admissible on \((\cA,\preceqA)\).
\end{proposition}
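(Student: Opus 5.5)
The proof has two parts: first the three conditions of weak admissibility, then a single counterexample to strict isotonicity.

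\emph{Weak admissibility.} Continuity holds because \(\phi_\theta\) is the restriction to \(\cA\) of a coordinate projection \([0,1]^3\to[0,1]\), and such maps are continuous.

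Boundary grounding is a direct evaluation at the states fixed in Eq.~\eqref{eq:bottom-top-ala}:
\[
\phi_\theta(\bot_{\cA})=\phi_\theta(0,0,1)=0,
\qquad
\phi_\theta(\top_{\cA})=\phi_\theta(1,1,0)=1.
\]

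Isotonicity follows from the definition of the cognitive order in Eq.~\eqref{eq:cognitive-order}. If \(A\preceqA B\), its first clause gives \(\theta_A\le\theta_B\), so \(\phi_\theta(A)\le\phi_\theta(B)\). The other two clauses are not needed. So \(\phi_\theta\in\cPhi_{\mathrm{wadm}}\).

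\emph{Failure of strict isotonicity.} I would exhibit two states with \(A\precA B\) that share the same valuation, so the strict improvement lies only in \(\sigma\) or \(\gamma\). A concrete choice is
\[
A=(0.50,0.40,0.70),
\qquad
B=(0.50,0.80,0.30).
\]
Checking the order: \(0.50\le0.50\), \(0.40\le0.80\), and \(0.70\ge0.30\), so \(A\preceqA B\). Since \(A\neq B\), this is strict: \(A\precA B\).

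Yet \(\phi_\theta(A)=\phi_\theta(B)=0.50\), so the implication \(A\precA B\Rightarrow\phi_\theta(A)<\phi_\theta(B)\) fails. Hence \(\phi_\theta\notin\cPhi_{\mathrm{adm}}\).

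A variant that changes only \(\gamma\) (or only \(\sigma\)) would work equally well. I would mention this to match the remark that \(\phi_\theta\) ignores improvements in contextual alignment and in epistemic restraint.

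No step is a real obstacle. The only point needing care is to confirm that the chosen pair is strictly ordered under the reversed polarity of \(\gamma\), i.e.\ that the restraint coordinate decreases from \(A\) to \(B\) rather than increases.
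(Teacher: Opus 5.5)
Your proof is correct and takes the paper's approach. The paper likewise gets weak admissibility directly from continuity, boundary grounding, and isotonicity in the valuation coordinate, and it refutes strict isotonicity with an equal-valuation pair, \((0.50,0.20,0.80)\prec_{\mathcal A}(0.50,0.90,0.10)\), which does exactly what your pair does; your write-up is, if anything, more explicit on the three weak-admissibility checks.
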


\begin{proof}
Consider
\[
A=(0.50,0.20,0.80),
\qquad
B=(0.50,0.90,0.10).
\]
Then \(A\precA B\), because \(\theta_A=\theta_B\), \(\sigma_A<\sigma_B\), and \(\gamma_A>\gamma_B\). However,
\[
\phi_\theta(A)=0.50=\phi_\theta(B).
\]
Thus the strict improvement from \(A\) to \(B\) is invisible under \(\phi_\theta\). Hence \(\phi_\theta\) fails strict isotonicity and is not strictly admissible. Its weak admissibility follows directly from continuity, boundary grounding, and isotonicity in the valuation coordinate.
\end{proof}

This result is central to the distinction between ALA and a scalar-valued fuzzy readout. If only \(\theta\) is retained, then the preserved semantic roles of contextual alignment and epistemic restraint are no longer available before reasoning begins. ALA permits the display of \(\theta\) as one coordinate, but it does not treat \(\theta\) alone as a strictly admissible projection of the whole state.

\subsection{Projection Policy and Restraint Polarity}
Scalar projection is therefore best understood as an interface layer. The ALA state remains the computational and semantic object. The scalar projection is a controlled output used when another layer of analysis demands a number. This separation is important for the non-collapse analysis of the next section. If scalar equality is treated as equality of states, then contextual alignment and epistemic restraint disappear. If scalar equality is treated as projection-induced equivalence, then the hidden triadic structure remains available for further operations, comparisons, and interpretation.

Projection in ALA is therefore delayed, admissible, and declared: delayed because it follows state formation. It is admissible because it respects boundary grounding and strict isotonicity. It is declared because the scalar value has no complete meaning without the projection policy that generated it.

\begin{principle}[Policy-complete scalar reporting]
\label{prin:policy-complete-reporting}
In ALA, a scalar value without its projection policy is not a complete
ALA report. A reported scalar value \(\phi(A)\) is complete only when the
admissible projection that generated it, together with its declared
interface purpose and role-priority parameters, is also reported. Thus, for
a response-adjusted projection, the scalar report should be read as
\[
(T,\phi_{w,\eta},w,\eta,\phi_{w,\eta}(A)),
\]
where \(T\) denotes the declared interface purpose, \(w\) records the
role-priority vector, and \(\eta\) records the coordinate response
functions. The scalar number is therefore an auditable interface output,
not a standalone replacement for the ALA state.
\end{principle}

\begin{remark}[Projection is not aggregation]
\label{rem:projection-not-aggregation}
In ALA, projection and aggregation belong to different layers and carry
different types. A projection is a policy-indexed scalar readout of a single
preserved state,
\[
\phi_\pi:\cA\to[0,1],
\]
whereas an aggregation operator fuses several ALA states into another ALA
state,
\[
G:\cA^n\to\cA.
\]
In particular, the scalar produced by \(\phi_\pi\) is not an aggregate of
the three coordinates under a shared membership budget, and there is no
canonical scalar value intrinsically determined by the ALA state itself. A
given policy may well compute a weighted sum, as \(\phi_w\) does, but this
reflects a declared scalar interface choice, not an intrinsic scalar carried
by the state. Different admissible policies may therefore yield different
readouts of the same preserved state. Treating the scalar as a within-state
aggregate would reintroduce precisely the kind of membership-budget collapse
that the preserved-state architecture of ALA is designed to avoid.
\end{remark}

\begin{proposition}[Order-unanimity of admissible projections]
\label{prop:order-unanimity}
Let \(A,B\in\cA\). Recall that the ALA cognitive order \(\preceq_{\mathcal A}\) is the
product-type order obtained from the order-aligned coordinates
\[
\Psi(A)=(\theta_A,\sigma_A,1-\gamma_A),
\]
and that admissible scalar projections are defined to be isotone with
respect to this same order. If \(A\preceq_{\mathcal A} B\), then
\[
\phi(A)\leq \phi(B)
\]
for every admissible scalar projection \(\phi\in\cPhi_{\mathrm{adm}}\).
If \(A\prec_{\mathcal A} B\), then
\[
\phi(A)<\phi(B)
\]
for every \(\phi\in\cPhi_{\mathrm{adm}}\). Consequently, an admissible
projection cannot reverse a strict cognitive dominance relation.
\end{proposition}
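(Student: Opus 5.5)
The plan is to unpack the definitions, since both parts of Proposition~\ref{prop:order-unanimity} are essentially restatements of the properties built into \(\cPhi_{\mathrm{adm}}\). First I fix an arbitrary \(\phi\in\cPhi_{\mathrm{adm}}\). By the definition of strict admissibility, \(\phi\) is in particular weakly admissible, hence isotone with respect to \(\preceqA\). So \(A\preceqA B\) gives \(\phi(A)\le\phi(B)\) directly. Since \(\phi\) was arbitrary, the inequality holds uniformly over the whole class.

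For the strict part, I would again take \(\phi\in\cPhi_{\mathrm{adm}}\) arbitrary and invoke strict isotonicity, which is part of the definition of \(\cPhi_{\mathrm{adm}}\). From \(A\precA B\) this yields \(\phi(A)<\phi(B)\). I would also remark, using Proposition~\ref{prop:order-alignment}, that \(A\precA B\) means exactly that \(\Psi(A)\le_{\mathrm{cw}}\Psi(B)\) with \(\Psi(A)\neq\Psi(B)\), since \(\Psi\) is a bijection. Strict dominance therefore means coordinatewise improvement in the order-aligned coordinates \((\theta,\sigma,1-\gamma)\) with at least one strict improvement. This is the reading of ``strict cognitive dominance'' used in the final sentence.

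The ``consequently'' clause follows by contradiction. Suppose some admissible \(\phi\) reversed a strict dominance \(A\precA B\), meaning \(\phi(A)\ge\phi(B)\). That would contradict \(\phi(A)<\phi(B)\) from the previous step. I would add a brief observation that non-strict reversal \(\phi(A)>\phi(B)\) under \(A\preceqA B\) is already excluded by weak admissibility. The clause therefore holds even for the larger class \(\cPhi_{\mathrm{wadm}}\) when stated in the non-strict form.

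There is no real obstacle here. The only point needing care is bookkeeping: the non-strict claim requires only weak admissibility, while the strict claim genuinely uses strict isotonicity. The earlier proposition on pure valuation, with \(\phi_\theta\), shows that the strict claim fails for weakly admissible projections. I would cite it to make clear that the restriction to \(\cPhi_{\mathrm{adm}}\) in the second assertion is essential.
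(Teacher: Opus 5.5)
Your proposal is correct and takes essentially the same definition-unpacking approach as the paper. The one cosmetic difference is how the non-strict inequality is obtained: the paper splits \(A\preceqA B\) into the cases \(A=B\) and \(A\precA B\) and invokes strict isotonicity, whereas you read it off directly from the isotonicity built into weak admissibility, which also makes explicit that the non-strict claim already holds on \(\cPhi_{\mathrm{wadm}}\) and that only the strict claim needs strict isotonicity (as \(\phi_\theta\) shows).
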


\begin{proof}
If \(A=B\), then \(\phi(A)=\phi(B)\) for every projection. If
\(A\prec_{\mathcal A} B\), then the strict inequality \(\phi(A)<\phi(B)\) follows
directly from strict isotonicity with respect to the ALA cognitive order
in the definition of admissible scalar projection. Therefore
\(A\preceq_{\mathcal A} B\) implies \(\phi(A)\leq\phi(B)\) for every admissible
projection.
\end{proof}

\begin{remark}[Where ALA departs from weighted aggregation]
\label{rem:projection-vs-weighted-aggregation}
Order-unanimity in Proposition~\ref{prop:order-unanimity} is an internal
consistency requirement. By itself, it does not distinguish ALA from
strictly positive weighted aggregation, since such aggregations also
preserve dominance with respect to the relevant product-type order. The
genuine departure lies elsewhere. In ALA, the product-type order is carried
by semantically typed coordinates: valuation, contextual alignment, and
order-aligned restraint. More importantly, incomparable states are an
explicitly represented structural fact, not a defect to be removed.

In standard weighted aggregation, the weights constitute the comparison and
force a scalar ordering, thereby suppressing incomparability at the level of
the reported score. In ALA, the partial cognitive order is primary and prior
to any projection. An admissible projection is a declared and auditable
interface that may present or resolve structurally incomparable states at
the reporting boundary, without altering the underlying ALA state. The
distinction is not that ALA avoids producing a scalar; it does produce
\(\phi_{w,\eta}(A)\). The distinction is that the scalar never replaces or
erases the underlying state, which remains recoverable and continues to
carry the role-level incomparability structure. Projection policy therefore
does not create the primitive comparison relation; it only selects a scalar
presentation among states whose role-level structure has already been
preserved.
\end{remark}

\label{subsec:polarity_signature_restraint}

The structures developed so far show that epistemic restraint is not a relabeled form of confidence: its mathematical signature is reversed operational polarity.

A confidence-like coordinate, when interpreted as assertion-supportive confidence, is positively oriented: increasing confidence should not weaken the cognitive permissiveness of a state when all other coordinates are fixed. Epistemic restraint behaves in the opposite direction. Increasing \(\gamma\) decreases cognitive permissiveness, because stronger restraint limits premature operationalization.

\begin{proposition}[Polarity incompatibility of confidence and epistemic restraint]
Let \(c\in[0,1]\) be interpreted as an assertion-supportive confidence coordinate. For fixed valuation and contextual alignment, increasing such a coordinate should not decrease the cognitive permissiveness of the state. In ALA, however, epistemic restraint has the opposite order behavior:
\[
\gamma_1<\gamma_2
\quad\Longrightarrow\quad
(\theta,\sigma,\gamma_2)\preceqA(\theta,\sigma,\gamma_1).
\]
Therefore, \(\gamma\) cannot be interpreted as assertion-supportive confidence without reversing the semantic polarity of confidence.
\end{proposition}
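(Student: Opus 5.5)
The plan is to verify the displayed implication directly from the definition of the cognitive order, Eq.~\eqref{eq:cognitive-order}, and then derive the interpretive conclusion by contradiction.

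\textbf{Step 1: the order claim.} Fix \(\theta,\sigma\in[0,1]\) and \(\gamma_1<\gamma_2\) in \([0,1]\). Put \(A=(\theta,\sigma,\gamma_2)\) and \(B=(\theta,\sigma,\gamma_1)\). The relation \(A\preceqA B\) requires three inequalities:
\[
\theta\le\theta,\qquad \sigma\le\sigma,\qquad \gamma_2\ge\gamma_1.
\]
The first two hold trivially and the third holds by hypothesis. Since \(A\neq B\), we in fact get the strict relation \(A\precA B\). Equivalently, by Proposition~\ref{prop:order-alignment}, \(\Psi(A)=(\theta,\sigma,1-\gamma_2)\) lies coordinatewise below \(\Psi(B)=(\theta,\sigma,1-\gamma_1)\), with strict inequality in the third coordinate. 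I would also note that \(B\not\preceqA A\), since that would need \(\gamma_1\ge\gamma_2\). So the increase in \(\gamma\) is a genuine strict decrease in permissiveness, not a tie.

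\textbf{Step 2: the interpretive conclusion.} Suppose \(\gamma\) were read as an assertion-supportive confidence coordinate \(c\). The stated requirement on such a coordinate is that raising \(c\) with \(\theta,\sigma\) fixed never lowers permissiveness; in the order-theoretic sense this means \((\theta,\sigma,c_1)\preceqA(\theta,\sigma,c_2)\), or at least \(\lnot\,\bigl((\theta,\sigma,c_2)\precA(\theta,\sigma,c_1)\bigr)\), whenever \(c_1<c_2\). Setting \(c=\gamma\), Step~1 gives exactly \((\theta,\sigma,\gamma_2)\precA(\theta,\sigma,\gamma_1)\), the forbidden strict decrease. By antisymmetry of \(\preceqA\), the nonstrict form of the requirement fails as well. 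To make this visible at the scalar interface, I would add that by Proposition~\ref{prop:order-unanimity} every \(\phi\in\cPhi_{\mathrm{adm}}\) gives \(\phi(\theta,\sigma,\gamma_2)<\phi(\theta,\sigma,\gamma_1)\). An example is \(\phi_{\mathrm{bal}}\), which drops by \((\gamma_2-\gamma_1)/3\). So reading \(\gamma\) as confidence would require reversing its polarity, i.e.\ replacing it by \(1-\gamma\), which is the claim.

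\textbf{Main obstacle.} The mathematics is trivial. The hard part is formalizing ``confidence should not decrease permissiveness'' precisely enough that the contradiction is rigorous rather than rhetorical. I would state it explicitly as the monotonicity condition above, so that the conclusion is a clean incompatibility of orientations and not a claim about the meaning of confidence in general.
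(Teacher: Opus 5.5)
Your proposal is correct and takes essentially the same route as the paper: both read the displayed implication directly off the definition of \(\preceqA\) (the first two coordinates are equal and the third is ordered in reverse), then conclude that a positively oriented confidence coordinate cannot fill that slot. Your additions sharpen the paper's informal last step without changing the argument; they are the strictness of the relation, an explicit monotonicity formalization of the confidence requirement, and the scalar-interface check via Proposition~\ref{prop:order-unanimity}, which the paper itself echoes right afterwards with \(\partial\phi_w/\partial\gamma_A=-w_\rho<0\).
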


\begin{proof}
Fix \(\theta,\sigma\in[0,1]\) and let \(\gamma_1<\gamma_2\). By the definition of the ALA cognitive order,
\[
(\theta,\sigma,\gamma_2)\preceqA(\theta,\sigma,\gamma_1),
\]
because the first two coordinates are equal and the restraint coordinate is ordered in the reverse direction. Thus, increasing \(\gamma\) lowers the state in the order of cognitive permissiveness. A coordinate interpreted as assertion-supportive confidence cannot have this behavior while preserving its usual positive orientation. Hence \(\gamma\) is polarity-incompatible with assertion-supportive confidence.
\end{proof}

This proposition does not claim that no mathematical model can define a negatively oriented auxiliary variable. Rather, it states a more precise fact: no coordinate that retains the usual meaning of assertion-supportive confidence can play the role of \(\gamma\) in the ALA cognitive order. If such a coordinate is given reversed polarity, it no longer functions as confidence in the ordinary support-amplifying sense; it becomes a restraint-like or permissiveness-complement coordinate.

The same polarity signature appears in scalar projection. In the positive linear order-aligned projection,
\[
\phi_w(A)
=
w_\theta\theta_A+w_\sigma\sigma_A+w_\rho(1-\gamma_A),
\]
the restraint coordinate contributes through \(1-\gamma_A\), not through \(\gamma_A\) itself. Thus, for fixed \(\theta_A\) and \(\sigma_A\),
\[
\frac{\partial \phi_w}{\partial \gamma_A}=-w_\rho<0,
\]
whenever \(w_\rho>0\). A larger value of \(\gamma_A\) lowers the scalar interface because it indicates stronger restraint. By contrast, an assertion-supportive confidence coordinate would normally enter a scalar support interface with positive polarity.

Hence, epistemic restraint is not defined merely by independence from confidence. It is defined by reversed operational polarity; Table~\ref{tab:restraint-distinction} in Section~7 collects the corresponding semantic contrasts. ALA separates evidential support from permission to operationalize that support: valuation and contextual alignment may be high while restraint is also high. This possibility is not a defect of the framework; it is precisely one of the structural reasons for preserving the triadic state before scalar projection.

\section{Projection Equivalence and Non-Collapse}
\label{sec:projection-equivalence}

Scalar projection has been established as a delayed and admissible interface; this section explains why it cannot replace the triadic state itself. The central point is that equality has several layers in ALA. Two states may be exactly equal as triadic objects, equivalent under a declared scalar projection, indistinguishable under a sufficiently rich family of projections, or incomparable under the cognitive order. These notions are not interchangeable.

This distinction is the formal expression of the principle of reasoning before projection. If a scalar value is treated as the primitive state, then contextual alignment and epistemic restraint may disappear before any operation is performed. If the triadic state is preserved, scalar equality can be recognized as projection-induced equivalence rather than as genuine equality of interpretive states.

\subsection{Equality Layers}
The strongest equality relation in ALA is coordinatewise equality. It is independent of any scalar interface.

\begin{definition}[Exact equality]
For two ALA states
\[
A=(\theta_A,\sigma_A,\gamma_A),
\qquad
B=(\theta_B,\sigma_B,\gamma_B),
\]
we say that \(A\) and \(B\) are exactly equal, written \(A=B\), if and only if
\[
\theta_A=\theta_B,\qquad
\sigma_A=\sigma_B,\qquad
\gamma_A=\gamma_B.
\]
\end{definition}

Exact equality means that the two states have the same interpretive valuation, the same contextual alignment, and the same epistemic restraint. It is therefore the equality relation of the ALA state space itself. Scalar equality under a projection is weaker, because it is possible for changes in one coordinate to compensate changes in another coordinate at the scalar interface.

Let \(\phi\in\cPhi_{\mathrm{adm}}\) be an admissible scalar projection. Projection induces an equivalence relation by identifying states with the same scalar interface value.

\begin{definition}[Projection-induced equivalence]
\label{def:projection-equivalence}
For \(A,B\in\cA\), define
\[
A\sim_{\phi}B
\quad\Longleftrightarrow\quad
\phi(A)=\phi(B).
\]
The equivalence class of \(A\) under \(\phi\) is
\[
[A]_{\phi}=\{B\in\cA:\phi(B)=\phi(A)\}.
\]
\end{definition}

\begin{proposition}[Projection equivalence is an equivalence relation]
For every admissible scalar projection \(\phi\), the relation \(\sim_\phi\) is reflexive, symmetric, and transitive on \(\cA\).
\end{proposition}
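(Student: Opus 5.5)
The plan is to recognize \(\sim_\phi\) as the kernel of the map \(\phi:\cA\to[0,1]\). Any relation of the form "\(f(A)=f(B)\)" inherits its reflexivity, symmetry, and transitivity from equality in the codomain. Nothing about admissibility (continuity, boundary grounding, strict isotonicity) is needed. The only property of \(\phi\) used is that it is a well-defined function, so that \(\phi(A)\) is a single real number for each \(A\in\cA\).

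The steps are as follows, using Definition~\ref{def:projection-equivalence}.
\begin{enumerate}
\item For reflexivity, take any \(A\in\cA\). Then \(\phi(A)=\phi(A)\) holds by reflexivity of equality in \([0,1]\), so \(A\sim_\phi A\).
\item For symmetry, suppose \(A\sim_\phi B\), that is, \(\phi(A)=\phi(B\)). Symmetry of equality of reals gives \(\phi(B)=\phi(A)\), hence \(B\sim_\phi A\).
\item For transitivity, suppose \(A\sim_\phi B\) and \(B\sim_\phi C\), that is, \(\phi(A)=\phi(B)\) and \(\phi(B)=\phi(C)\). Transitivity of equality then gives \(\phi(A)=\phi(C)\), so \(A\sim_\phi C\).
\end{enumerate}

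As a consequence, the sets \([A]_\phi\) are exactly the level sets \(\phi^{-1}(\{\phi(A)\})\). They therefore partition \(\cA\).

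There is no genuine obstacle here. The one point worth stating is that the statement holds for every map \(\phi:\cA\to[0,1]\), admissible or not. Recording this prevents the reader from thinking that admissibility is what makes \(\sim_\phi\) an equivalence relation. Admissibility instead matters for the later non-collapse analysis. For example, strict isotonicity implies that no two distinct comparable states lie in the same class: if \(A\precA B\) then \(\phi(A)<\phi(B)\), so \(A\not\sim_\phi B\). Consequently each equivalence class is an antichain of \((\cA,\preceqA)\), though this remark goes beyond what the proposition requires.
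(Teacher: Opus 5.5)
Your proof is correct and matches the paper's own argument: reflexivity, symmetry, and transitivity of \(\sim_\phi\) are each inherited directly from equality of the real values \(\phi(A)\), and admissibility is never used. Your extra observations are also correct, though not needed for the statement: the result holds for any map \(\phi:\cA\to[0,1]\), and strict isotonicity makes each class \([A]_\phi\) an antichain of \((\cA,\preceqA)\).
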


\begin{proof}
For every \(A\in\cA\), \(\phi(A)=\phi(A)\), so \(A\sim_\phi A\). If \(A\sim_\phi B\), then \(\phi(A)=\phi(B)\), hence \(\phi(B)=\phi(A)\), so \(B\sim_\phi A\). If \(A\sim_\phi B\) and \(B\sim_\phi C\), then \(\phi(A)=\phi(B)=\phi(C)\), hence \(A\sim_\phi C\).
\end{proof}

Projection-induced equivalence is not an error. It is the expected behavior of a scalar interface. What matters is that ALA does not confuse \(A\sim_\phi B\) with \(A=B\). The equivalence class \([A]_\phi\) is a scalar level set inside the triadic state space, not a set of identical interpretive states.

The following example shows explicitly, under the balanced projection of Eq.~\eqref{eq:balanced-projection}, that scalar equality is not triadic equality.

\begin{example}[Scalar equality without exact equality]
Let
\[
A=(0.80,0.40,0.20),
\qquad
B=(0.60,0.80,0.40).
\]
Under the balanced projection
\[
\phi_{\mathrm{bal}}(A)=\frac{\theta_A+\sigma_A+(1-\gamma_A)}{3},
\]
we obtain
\[
\phi_{\mathrm{bal}}(A)=\frac{0.80+0.40+0.80}{3}=\frac{2}{3},
\]
and
\[
\phi_{\mathrm{bal}}(B)=\frac{0.60+0.80+0.60}{3}=\frac{2}{3}.
\]
Thus \(A\sim_{\phi_{\mathrm{bal}}}B\), but \(A\neq B\). Moreover, \(A\parallelA B\): state \(A\) has higher valuation and lower restraint, whereas state \(B\) has stronger contextual alignment. The scalar interface equalizes them, while the triadic representation preserves the role-level difference.
\end{example}

The example shows that scalar equality may arise by compensation among semantic roles. ALA records the compensation instead of erasing it.

\subsection{Operational Fracturing}
Projection equivalence can be fractured by a structure-preserving ALA operation. This is the key operational reason for preserving the triadic state before projection. Two states that are equal under a scalar interface may respond differently when combined with a probe, context, evidence item, or operation.

\begin{lemma}[Compensatory projection equivalence]
\label{lem:projection-equivalence-existence}
For every admissible projection \(\phi\in\cPhi_{\mathrm{adm}}\), there exist distinct ALA states \(A_1,A_2\in\cA\) such that
\begin{equation}
\phi(A_1)=\phi(A_2).
\label{eq:existence-same-projection}
\end{equation}
\end{lemma}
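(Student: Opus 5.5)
The plan is to use only two properties of an admissible projection: continuity, and the boundary values forced by grounding and strict isotonicity. The idea is that a continuous map from the connected, three-dimensional cube \(\cA=[0,1]^3\) into \([0,1]\) cannot be injective. To make this concrete and avoid invoking invariance of domain, I will work on a simple one-parameter loop in \(\cA\) and apply the intermediate value theorem.

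\textbf{Step 1: the reference values.} Fix \(\phi\in\cPhi_{\mathrm{adm}}\). Consider the two incomparable states \(P=(1,0,1)\) and \(Q=(0,1,1)\). Both lie strictly between the bounds: \(\bot_{\cA}\precA P\precA\top_{\cA}\) and \(\bot_{\cA}\precA Q\precA\top_{\cA}\). Boundary grounding and strict isotonicity then give
\[
0<\phi(P)<1 \quad\text{and}\quad 0<\phi(Q)<1 .
\]

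\textbf{Step 2: two paths and a level.} Take two paths from \(\bot_{\cA}\) to \(\top_{\cA}\). The path \(\alpha\) goes first along the valuation edge to \(P\), then decreases restraint to \((1,0,0)\), then increases alignment to \(\top_{\cA}\). The path \(\beta\) goes through \(Q\) and then through \((0,1,0)\). Both are continuous, and apart from their shared endpoints their images are disjoint, since every point of \(\alpha\) other than the endpoints has \(\sigma<1\) or \(\theta=1\) with \(\sigma<1\), while the points of \(\beta\) satisfy the analogous conditions with the roles of \(\theta\) and \(\sigma\) exchanged. Pick any level \(t\in(0,1)\), for instance \(t=\tfrac12\). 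Since \(\phi\circ\alpha\) and \(\phi\circ\beta\) are continuous and run from \(0\) to \(1\), the intermediate value theorem gives parameters \(s,s'\in(0,1)\) with \(\phi(\alpha(s))=t=\phi(\beta(s'))\). Setting \(A_1=\alpha(s)\) and \(A_2=\beta(s')\) gives Eq.~\eqref{eq:existence-same-projection}.

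\textbf{Step 3: distinctness, the main obstacle.} What remains is to show \(A_1\neq A_2\), that is, that the interior points of \(\alpha\) and \(\beta\) do not coincide. The simplest way is to read the coordinates off the explicit edge parametrization of each path. Every interior point of \(\alpha\) has either \(\theta=1>\sigma\) or lies on the open valuation edge with \(\sigma=0<1\) and \(\theta\) free. Every interior point of \(\beta\) has \(\sigma>0\) or \(\theta=0\). Checking this edge by edge shows the only possible common points are \(\bot_{\cA}\) and \(\top_{\cA}\), and these are excluded because \(0<t<1\).

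A cleaner alternative for this step is to restrict \(\phi\) to the two-dimensional face \(\gamma=1\). If the restriction were injective, it would be a continuous injection of a square into an interval. Removing any interior point disconnects the interval but not the square, which is a contradiction. I would keep this argument as a fallback. I would also note that the strict class is used only for Step 1, so the lemma in fact holds for every weakly admissible projection.
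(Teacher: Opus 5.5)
Your proof is correct, and it takes a genuinely different route from the paper's.

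\textbf{The paper's route.} It stays on a single coordinate line. From an interior state \(A_1=(\theta_1,\sigma_0,\gamma_1)\), it uses strict isotonicity and continuity to obtain \(\phi(\theta_1+\epsilon,\sigma_0,1)<\phi(A_1)<\phi(\theta_1+\epsilon,\sigma_0,\gamma_1)\). It then applies the intermediate value theorem in \(\gamma\) alone.

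\textbf{Your route.} You show non-injectivity topologically, using two paths from \(\bot_{\cA}\) to \(\top_{\cA}\) that meet only at their endpoints. This needs only continuity and \(\phi(\bot_{\cA})\neq\phi(\top_{\cA})\). Even the latter can be dropped, since equality there already proves the lemma. So your Step 1 is never actually used, and strict isotonicity plays no role anywhere, not merely a role confined to Step 1. Your approach therefore proves the lemma for every continuous map on the cube.

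\textbf{What the paper's route buys.} It gives structure. Its pair shares \(\sigma\) and satisfies \(\theta_2>\theta_1\) and \(\gamma_2>\gamma_1\): a larger valuation compensated by stronger restraint. This is the form explicitly invoked in the proof of Lemma~\ref{lem:operational-fracturing}, where a probe lying below both states erases the restraint difference under \(\joinA\). Your unstructured pair cannot be plugged into that proof verbatim. The non-collapse theorem still follows from it, though. Under strict isotonicity, a distinct \(\phi\)-equal pair must be incomparable, so \(A_1\precA A_1\joinA A_2\). The probe \(B=A_1\) then gives \(\phi(A_1\joinA B)=\phi(A_1)<\phi(A_2\joinA B)\).

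\textbf{A presentational point.} The verbal criteria you state for disjointness do not separate the paths on their own. For example, \((1,\tfrac12,0)\) lies on \(\alpha\) and satisfies your \(\beta\)-criterion ``\(\sigma>0\)''. The clean invariant is \(\alpha\subseteq\{\sigma=0\}\cup\{\theta=1\}\) and \(\beta\subseteq\{\theta=0\}\cup\{\sigma=1\}\). The path \(\alpha\) meets the latter set only at \(\bot_{\cA}\) and \(\top_{\cA}\), which is exactly what your edge-by-edge check verifies.
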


\begin{proof}
Choose an interior state
\[
A_1=(\theta_1,\sigma_0,\gamma_1)\in(0,1)^3.
\]
Because \(\phi\) is strictly isotone with respect to the cognitive order, it is strictly decreasing in \(\gamma\) when \(\theta\) and \(\sigma\) are fixed. Hence
\[
\phi(\theta_1,\sigma_0,1)<\phi(\theta_1,\sigma_0,\gamma_1)=\phi(A_1).
\]
By continuity, there exists a sufficiently small \(\epsilon>0\) such that \(\theta_2=\theta_1+\epsilon<1\) and
\[
\phi(\theta_2,\sigma_0,1)<\phi(A_1).
\]
On the other hand, strict isotonicity in the valuation coordinate gives
\[
\phi(\theta_2,\sigma_0,\gamma_1)>
\phi(\theta_1,\sigma_0,\gamma_1)=\phi(A_1).
\]
Define
\[
h(\gamma)=\phi(\theta_2,\sigma_0,\gamma),
\qquad \gamma\in[\gamma_1,1].
\]
The function \(h\) is continuous, and the preceding inequalities imply
\[
h(1)<\phi(A_1)<h(\gamma_1).
\]
By the intermediate value theorem, there exists \(\gamma_2\in(\gamma_1,1)\) such that
\[
h(\gamma_2)=\phi(A_1).
\]
Setting
\[
A_2=(\theta_2,\sigma_0,\gamma_2)
\]
gives \(A_1\neq A_2\) and \(\phi(A_1)=\phi(A_2)\).
\end{proof}

The proof constructs scalar equality by compensation: \(A_2\) has a larger valuation than \(A_1\), but it also has stronger epistemic restraint. A scalar projection may balance these two changes. The next lemma shows that this balance is not operationally stable.

\begin{lemma}[Operational fracturing of projection equivalence]
\label{lem:operational-fracturing}
For every admissible projection \(\phi\in\cPhi_{\mathrm{adm}}\), there exist distinct ALA states \(A_1,A_2\in\cA\), a state \(B\in\cA\), and a lattice operation
\[
\star\in\{\vee_{\cA},\wedge_{\cA}\}
\]
such that
\begin{equation}
\phi(A_1)=\phi(A_2),
\qquad
\phi(A_1\star B)\neq \phi(A_2\star B).
\label{eq:fracturing-statement}
\end{equation}
\end{lemma}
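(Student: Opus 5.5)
The plan reuses the compensating pair of Lemma~\ref{lem:projection-equivalence-existence}: an interior \(A_1=(\theta_1,\sigma_0,\gamma_1)\) and \(A_2=(\theta_2,\sigma_0,\gamma_2)\) with \(\theta_2>\theta_1\), \(\gamma_2>\gamma_1\) and \(\phi(A_1)=\phi(A_2)\). The pair differs in exactly two coordinates, and the two differences pull in opposite cognitive directions. The idea is to choose a probe \(B\) that neutralizes one of these differences under a lattice operation while leaving the other intact. The scalar balance is then destroyed, and strict isotonicity detects the resulting strict dominance.

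Concretely, take \(\star=\vee_{\cA}\) and
\[
B=(\theta_2,\,0,\,1).
\]
By Eq.~\eqref{eq:ala-join} the joins are
\[
A_1\vee_{\cA}B=(\max\{\theta_1,\theta_2\},\,\sigma_0,\,\min\{\gamma_1,1\})=(\theta_2,\sigma_0,\gamma_1),
\]
\[
A_2\vee_{\cA}B=(\theta_2,\sigma_0,\gamma_2).
\]
The two joined states now share valuation and contextual alignment and differ only in restraint. Since \(\gamma_1<\gamma_2\), we get \(A_2\vee_{\cA}B\precA A_1\vee_{\cA}B\). Strict isotonicity of the admissible projection (equivalently, Proposition~\ref{prop:order-unanimity}) then yields \(\phi(A_2\vee_{\cA}B)<\phi(A_1\vee_{\cA}B)\), which is the required fracture. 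In words, the join with \(B\) raises \(A_1\)'s valuation to match \(A_2\)'s, but it cannot lower \(A_1\)'s restraint advantage, because \(B\) carries maximal restraint and is therefore neutral in the \(\gamma\)-slot of the join.

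There is also a symmetric construction via the meet. With \(B'=(\theta_1,1,\gamma_2)\), the meets become \((\theta_1,\sigma_0,\gamma_2)\) and \((\theta_1,\sigma_0,\gamma_2)\)? That choice fails: both coordinates get equalized. The meet must neutralize only one coordinate, so the correct choice is \(B'=(\theta_1,1,0)\). This gives \(A_1\wedge_{\cA}B'=(\theta_1,\sigma_0,\gamma_1)\) and \(A_2\wedge_{\cA}B'=(\theta_1,\sigma_0,\gamma_2)\), which again differ only in \(\gamma\). Either construction suffices, since the statement asks only for some \(\star\).

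The main point to check is that the probe equalizes exactly one role and not both, and that the resulting pair is \emph{strictly} comparable so that strict isotonicity applies. Weak admissibility would give only \(\le\). This is where the hypothesis \(\phi\in\cPhi_{\mathrm{adm}}\) is essential, and also why \(\gamma_2>\gamma_1\) (strict, guaranteed by Lemma~\ref{lem:projection-equivalence-existence} since \(\gamma_2\in(\gamma_1,1)\)) must be recorded explicitly. The remaining verifications, namely that \(B\in\cA\) and the coordinatewise max/min computations, are routine.
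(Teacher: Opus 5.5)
Your proof is correct and is essentially the paper's argument: you reuse the compensating pair from Lemma~\ref{lem:projection-equivalence-existence}, apply a lattice operation with a probe that cancels one of the two offsetting differences while keeping the other strict, and then invoke strict isotonicity. The only difference is which gap you cancel. The paper's probe satisfies \(\theta_B\le\theta_1\), \(\sigma_B\le\sigma_0\), \(\gamma_B\le\gamma_1\), so its join equalizes restraint and leaves the valuation gap \(\theta_2>\theta_1\); your \(B=(\theta_2,0,1)\) instead equalizes valuation and leaves the restraint gap, and your corrected meet variant with \(B'=(\theta_1,1,0)\) also works.
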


\begin{proof}
It is enough to prove the result for one of the two structure-preserving lattice operations. We use \(\star=\vee_{\cA}\). By Lemma~\ref{lem:projection-equivalence-existence}, choose distinct states of the form
\[
A_1=(\theta_1,\sigma_0,\gamma_1),
\qquad
A_2=(\theta_2,\sigma_0,\gamma_2),
\]
with
\[
\theta_2>\theta_1,
\qquad
\gamma_2>\gamma_1,
\qquad
\phi(A_1)=\phi(A_2).
\]
Choose a probe state
\[
B=(\theta_B,\sigma_B,\gamma_B)\in\cA
\]
such that
\[
\theta_B\leq\theta_1,
\qquad
\sigma_B\leq\sigma_0,
\qquad
\gamma_B\leq\gamma_1.
\]
Using Eq.~\eqref{eq:ala-join}, we obtain
\[
A_1\vee_{\cA}B=(\theta_1,\sigma_0,\gamma_B),
\qquad
A_2\vee_{\cA}B=(\theta_2,\sigma_0,\gamma_B).
\]
Thus the join with \(B\) removes the compensating restraint difference by replacing both restraint coordinates with the common value \(\gamma_B\), while preserving the strict valuation difference \(\theta_2>\theta_1\). Therefore
\[
A_1\vee_{\cA}B\precA A_2\vee_{\cA}B.
\]
Since \(\phi\) is strictly isotone,
\[
\phi(A_1\vee_{\cA}B)<\phi(A_2\vee_{\cA}B).
\]
This proves Eq.~\eqref{eq:fracturing-statement}.
\end{proof}

Operational fracturing means that projection equivalence can be broken by an operation that is natural in the ALA state space. The scalar interface alone cannot predict this behavior unless the underlying triadic states are still available.

\subsection{Non-Collapse Theorem}
The preceding lemmas give the central non-collapse result of this work.

\begin{theorem}[ALA non-collapse theorem]
\label{thm:ala-non-collapse}
For every admissible projection \(\phi\in\cPhi_{\mathrm{adm}}\), there exist distinct ALA states \(A_1,A_2\in\cA\), a state \(B\in\cA\), and a structure-preserving lattice operation
\[
\star\in\{\vee_{\cA},\wedge_{\cA}\}
\]
such that
\[
\phi(A_1)=\phi(A_2),
\qquad
\phi(A_1\star B)\neq\phi(A_2\star B).
\]
Consequently, admissible scalar projection does not preserve, in general, all triadic distinctions that may become operationally relevant under ALA operations.
\end{theorem}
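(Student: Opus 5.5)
The theorem is a direct repackaging of Lemma~\ref{lem:operational-fracturing}, so the plan is to invoke that lemma and then spell out the interpretive consequence. Fix an arbitrary \(\phi\in\cPhi_{\mathrm{adm}}\).

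\emph{Step one: produce the compensating pair.} Apply Lemma~\ref{lem:projection-equivalence-existence} to \(\phi\). This yields distinct states \(A_1=(\theta_1,\sigma_0,\gamma_1)\) and \(A_2=(\theta_2,\sigma_0,\gamma_2)\) with \(\theta_2>\theta_1\), \(\gamma_2>\gamma_1\) and \(\phi(A_1)=\phi(A_2)\). The constructive proof of that lemma guarantees this particular shape: equal \(\sigma\), with a valuation gain offset by a restraint gain. I would use this shape explicitly, not merely the bare conclusion that some distinct pair exists.

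\emph{Step two: choose the probe and the operation.} Take \(\star=\vee_{\cA}\) and a probe \(B\) with \(\theta_B\le\theta_1\), \(\sigma_B\le\sigma_0\) and \(\gamma_B\le\gamma_1\). The simplest choice is \(B=(0,0,0)\), which lies in \(\cA\).

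\emph{Step three: compute the joins and compare.} By Eq.~\eqref{eq:ala-join}, the joins equalize the restraint coordinates while keeping the valuation gap:
\[
A_1\vee_{\cA}B=(\theta_1,\sigma_0,\gamma_B),\qquad A_2\vee_{\cA}B=(\theta_2,\sigma_0,\gamma_B).
\]
Hence \(A_1\vee_{\cA}B\precA A_2\vee_{\cA}B\). Strict isotonicity, as recorded in Proposition~\ref{prop:order-unanimity}, then gives the strict inequality \(\phi(A_1\vee_{\cA}B)<\phi(A_2\vee_{\cA}B)\), and in particular the required inequality of projections.

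\emph{Step four: the consequence clause.} If \(\phi\)-equivalence were a congruence for \(\vee_{\cA}\), equal projections of the inputs would force equal projections of the joins. Steps one to three show this fails for every admissible \(\phi\). So the scalar value alone cannot determine the result of a lattice operation, and \(\phi\) does not preserve all operationally relevant triadic distinctions.

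The main obstacle lies entirely in step one, which is already settled by Lemma~\ref{lem:projection-equivalence-existence}. That step needs continuity, strict monotonicity in both \(\theta\) and \(\gamma\), and an intermediate value argument to keep \(\gamma_2<1\) with \(\theta_2<1\). Given that lemma, the rest is a direct computation. As a cross-check, the meet with a probe of large \(\gamma\) and large \(\theta\) would instead collapse the restraint difference in the other direction, so either operation works.
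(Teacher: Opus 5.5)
Your proposal is correct and follows the paper's route: the paper proves the theorem by citing Lemma~\ref{lem:operational-fracturing}, and that lemma's proof does what you describe. It takes the compensating pair in the shape produced by the proof of Lemma~\ref{lem:projection-equivalence-existence}, joins each state with a probe \(B\) lying coordinatewise below \(A_1\) in \(\theta\), \(\sigma\) and \(\gamma\) (your \(B=(0,0,0)\) is a valid instance), and concludes by strict isotonicity. Your side remark is also right: a meet with a probe whose \(\theta\) and \(\gamma\) coordinates are large enough works symmetrically.
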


\begin{proof}
The statement follows directly from Lemma~\ref{lem:operational-fracturing}. Since \(\phi\in\cPhi_{\mathrm{adm}}\) was arbitrary, the result holds for every admissible projection.
\end{proof}

\begin{figure}[!htbp]
\centering
\resizebox{\textwidth}{!}{%
\begin{tikzpicture}[font=\small,>=Latex,node distance=1.15cm]
  \tikzstyle{statebox}=[draw,rounded corners,thick,align=center,minimum width=3.65cm,minimum height=1.05cm,inner sep=5pt]
  \tikzstyle{opbox}=[draw=Cjoin,rounded corners,thick,align=center,minimum width=3.65cm,minimum height=1.05cm,inner sep=5pt]
  \tikzstyle{scalarbox}=[draw=Cproj,rounded corners,thick,align=center,minimum width=3.10cm,minimum height=0.90cm,inner sep=5pt]
  \node[statebox] (Aone) at (0,2.4)
    {$A_1=(0.50,0.60,0.30)$\\[-1pt]
    };
  \node[statebox] (Atwo) at (0,0.6)
    {$A_2=(0.80,0.60,0.60)$\\[-1pt]
     };
  \node[scalarbox] (samephi) at (4.1,1.5)
    {same scalar report\\
\(\phi_{\mathrm{bal}}=0.600\)};
  \draw[->,Cproj,thick] (Aone.east) -- (samephi.west);
  \draw[->,Cproj,thick] (Atwo.east) -- (samephi.west);
  \node[statebox] (probe) at (0,-1.30)
    {$B=(0.40,0.50,0.10)$\\[-1pt]
     probe state};
  \node[opbox] (joinone) at (8.15,2.4)
    {$A_1\joinA B$\\[-1pt]
     $=(0.50,0.60,0.10)$};
  \node[opbox] (jointwo) at (8.15,0.6)
    {$A_2\joinA B$\\[-1pt]
     $=(0.80,0.60,0.10)$};
  \draw[->,Cjoin,thick] (Aone.east) to[bend left=10]
    node[above,font=\scriptsize] {$\joinA B$} (joinone.west);
  \draw[->,Cjoin,thick] (Atwo.east) to[bend right=10]
    node[below,font=\scriptsize] {$\joinA B$} (jointwo.west);
  \draw[->,Cjoin,thick,dashed] (probe.east) to[bend right=18] (joinone.south west);
  \draw[->,Cjoin,thick,dashed] (probe.east) to[bend left=18] (jointwo.south west);
  \node[scalarbox] (outone) at (12.35,2.4)
    {$\phi_{\mathrm{bal}}(A_1\joinA B)=0.667$};
  \node[scalarbox] (outtwo) at (12.35,0.6)
    {$\phi_{\mathrm{bal}}(A_2\joinA B)=0.767$};
  \draw[->,Cproj,thick] (joinone.east) -- (outone.west);
  \draw[->,Cproj,thick] (jointwo.east) -- (outtwo.west);
  
\end{tikzpicture}%
}
\caption{Non-collapse under a structure-preserving ALA operation.}
\label{fig:non-collapse-visual}
\label{fig:non-collapse-operation}
\end{figure}

Fig.~\ref{fig:non-collapse-visual} illustrates the non-collapse mechanism for the balanced projection~\eqref{eq:balanced-projection}. The states \(A_1=(0.50,0.60,0.30)\) and \(A_2=(0.80,0.60,0.60)\) have the same scalar report, \(\phi_{\mathrm{bal}}(A_1)=\phi_{\mathrm{bal}}(A_2)=0.600\), because the larger valuation of \(A_2\) is offset by stronger restraint. After joining both states with \(B=(0.40,0.50,0.10)\), the compensating restraint difference is removed, while the valuation difference remains. Hence the scalar reports separate after the operation.

\begin{remark}[Non-collapse as failure of congruence]
\label{rem:congruence}
Theorem~\ref{thm:ala-non-collapse} is stronger than the observation that a map from \([0,1]^3\) to \([0,1]\) may identify distinct points. The projection-induced equivalence \(\sim_\phi\) of Definition~\ref{def:projection-equivalence} is a congruence for the ALA lattice operations only if \(\phi(A_1)=\phi(A_2)\) implies \(\phi(A_1\star B)=\phi(A_2\star B)\) for every \(B\) and every operation \(\star\). The theorem says exactly that \(\sim_\phi\) is \emph{not} such a congruence: scalar equality is not preserved by ALA lattice operations, so a shared scalar value is operationally unstable rather than merely non-unique. Mechanically, the instability arises because an admissible projection mixes the three roles into a single number and is therefore non-injective, while a structure-preserving operation repartitions the coordinates and need not preserve the fibers of that mixing map; the same algebraic phenomenon would arise for any non-injective role-mixing projection on a product carrier. What gives the result its operational significance is the typing of the coordinates. When an operation exposes a difference that the scalar interface had hidden, it recovers a distinction in a role with fixed meaning, namely support held under a compensating restraint, and this distinction governs how the states behave under subsequent operations. The reversed polarity of \(\gamma\) and the role-preserving form of the operations are not the mechanism of non-collapse; they secure the semantic legitimacy of the operations, so that the fracture recovers an operationally consequential typed distinction rather than an uninterpreted numerical artifact.
\end{remark}

\section{Structural Necessity of the ALA Triad}
\label{sec:structural-necessity}

A remaining question is structural: why should the primitive state contain exactly the three coordinates
\(\theta\), \(\sigma\), and \(\gamma\)? The answer is conditional and role based. ALA adopts, as an adequacy postulate motivated by the counterfactual contrasts of Sections~1 and~3, that valuation, contextual alignment, and epistemic restraint are three distinct roles that may vary independently and should remain directly recoverable before projection. Given that postulate, the triad is not introduced to increase dimensionality for its own sake; it is the minimal role-transparent coordinate structure for preserving those three roles. This view remains compatible with the broader tradition of formal fuzzy and many-valued reasoning while changing the primitive representational unit from a scalar grade to a structured interpretive state \cite{Hajek1998}.

\subsection{Role Transparency and Independent Variability}
\label{subsec:role-preservation}

The central representational requirement of ALA is semantic transparency. A state representation is semantically transparent when each role that is essential to the judgment remains explicitly recoverable from the state. In scalar fuzzy membership, only the valuation-like output is directly visible. In ALA, the state records not only how strongly an element satisfies an interpreted property, but also how well the judgment fits its operative context and how much restraint should accompany premature commitment.

\begin{definition}[Interpretive role]
An interpretive role is a primitive semantic function that contributes to the meaning of a membership-like judgment before scalar reporting. In ALA, the primitive roles are
\[
\Theta \quad \text{valuation}, \qquad
\Sigma \quad \text{contextual alignment}, \qquad
\Gamma \quad \text{epistemic restraint}.
\]
An ALA state \(A\in\cA\) is role-transparent when \(\theta_A\), \(\sigma_A\), and \(\gamma_A\) are separately interpretable as the normalized scores of these three roles.
\end{definition}

\begin{principle}[Role preservation]
A representation of ambiguity should not merge two primitive interpretive roles before the operations that require them have been performed. Merging may be useful at a reporting interface, but it should not occur at the primitive state level.
\end{principle}

This principle is the formal content of reasoning before projection. Projection is permitted, but only after the triadic state has been used for comparison, operations, aggregation, or decision interfacing. If two roles are merged at the state level, no later operation can recover the distinction without adding information from outside the representation.

The three ALA coordinates are not treated as three labels for one latent quantity. The model is required to admit counterfactual variation in any one role while the other two are held fixed. This is an adequacy requirement of the representation, not an empirical claim that every application will realize every point of the full cube.

\begin{definition}[Independent role variability]
The roles \(\Theta\), \(\Sigma\), and \(\Gamma\) are independently variable on \(\cA=[0,1]^3\) if, for each role, there exist two ALA states that differ only in that role. Equivalently, for some fixed admissible values of the remaining coordinates, each coordinate admits nontrivial variation while the other two coordinates are held constant.
\end{definition}

\begin{proposition}[Coordinatewise variability of the normalized realization]
\label{prop:coordinatewise-independent-variability}
The ALA state space \(\cA=[0,1]^3\) satisfies the adopted adequacy requirement of independent role variability: each of valuation, contextual alignment, and epistemic restraint can vary while the other two coordinates are held fixed.
\end{proposition}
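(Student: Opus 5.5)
The plan is to verify the definition of independent role variability directly, by exhibiting for each role one pair of states in \(\cA=[0,1]^3\) that differ only in that role's coordinate. Because \(\cA\) is a full Cartesian product of three copies of \([0,1]\), the set of admissible values for any one coordinate does not depend on the values of the other two. This product structure is the only fact I need.

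\textbf{Step 1: fix a base point.} I would take an interior base point, for instance \((\theta_0,\sigma_0,\gamma_0)=(\tfrac12,\tfrac12,\tfrac12)\). I choose an interior point so that each coordinate can move in either direction without leaving \([0,1]\).

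\textbf{Step 2: exhibit the three pairs.} For valuation, take \((\tfrac12,\tfrac12,\tfrac12)\) and \((1,\tfrac12,\tfrac12)\). For contextual alignment, take \((\tfrac12,\tfrac12,\tfrac12)\) and \((\tfrac12,1,\tfrac12)\). For restraint, take \((\tfrac12,\tfrac12,\tfrac12)\) and \((\tfrac12,\tfrac12,1)\). Each pair lies in \(\cA\) and differs in exactly one coordinate, which is precisely what the definition asks for.

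\textbf{Step 3: strengthen to arbitrary fixed values.} To cover the stronger reading in the statement ("can vary while the other two are held fixed"), I would note something more general. For any fixed values of the remaining two coordinates, the corresponding fibre of the product is a full copy of \([0,1]\). So the free coordinate ranges over the whole interval, not merely over two points.

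\textbf{Step 4: confirm the pairs are semantically distinct.} As a brief supplementary remark, I would observe that each exhibited pair is strictly comparable under \(\preceqA\). By the admissibility propositions for \(\phi_w\), the two states in each pair are therefore separated by every strictly admissible projection. This shows that the variation is a genuine distinction of states in the sense the paper cares about, not only a difference of numerical labels.

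I expect no real obstacle. The only point needing care is Step 3. There I would state explicitly that independence is a property of the carrier, namely the product form, and not an empirical claim about any particular elicitation route.
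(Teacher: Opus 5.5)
Your proposal is correct and follows the paper's proof: the paper likewise fixes values of the other two coordinates in \((0,1)\) and exhibits, for each role, a pair of states in \(\cA=[0,1]^3\) that differ only in that coordinate, which is your Steps 1--2; the fibre observation in Step 3 is a harmless strengthening. One small correction to Step 4: separation by every strictly admissible projection follows directly from strict isotonicity (Proposition~\ref{prop:order-unanimity}), not from the admissibility propositions for \(\phi_w\), which only show that the \(\phi_w\) themselves belong to that class.
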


\begin{proof}
Choose any values in the open interval \((0,1)\). For valuation, let
\[
A_1=(\theta_1,\sigma,\gamma),\qquad
A_2=(\theta_2,\sigma,\gamma),
\]
with \(\theta_1\ne\theta_2\). Then \(A_1\) and \(A_2\) differ only in interpretive valuation. For contextual alignment, choose
\[
B_1=(\theta,\sigma_1,\gamma),\qquad
B_2=(\theta,\sigma_2,\gamma),
\]
with \(\sigma_1\ne\sigma_2\). For epistemic restraint, choose
\[
C_1=(\theta,\sigma,\gamma_1),\qquad
C_2=(\theta,\sigma,\gamma_2),
\]
with \(\gamma_1\ne\gamma_2\). Hence each role can be varied while the remaining two roles are fixed. This proves independent variability. 
\end{proof} 

Operationally, this independence has two complementary aspects: semantic independence, since each role can vary while the other roles are held fixed, and operational salience, since such variation can affect state-level comparison or operations rather than only a downstream scalar projection. Independent variability has an important consequence. If a representation deletes one coordinate, then it identifies states that ALA distinguishes.

\subsection{Conditional Semantic Minimality and Polarity Necessity}
\label{subsec:semantic-minimality}

The following proposition states the minimality of the triad. The argument is semantic rather than topological. ALA does not require a heavy representation theorem to justify three coordinates. It requires only the observation that three independently variable roles cannot be transparently represented by fewer than three role-specific coordinates.

\begin{proposition}[Conditional semantic minimality of the ALA triad]
\label{thm:semantic-minimality}
Let a membership-like judgment contain three primitive interpretive roles: valuation, contextual alignment, and epistemic restraint. Suppose that a coordinate representation is required to be role-transparent and to support independent variability of the three roles. Then no representation with fewer than three role-specific coordinates can satisfy these requirements. Conditional on these role and transparency assumptions, the ALA state in Eq.~\eqref{eq:ala-state} is semantically minimal among coordinate representations of the three roles.
\end{proposition}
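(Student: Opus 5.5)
The argument is by contradiction, built on a counting principle over role-specific coordinates; no topological dimension theory is needed. First I will make precise what a role-transparent coordinate representation with fewer than three coordinates means. It is a map
\[
R:\cA\to V_1\times\cdots\times V_k,\qquad k\le 2,
\]
in which each coordinate is role-specific: it is assigned to exactly one of the roles \(\Theta,\Sigma,\Gamma\) and reads only that role's score. Role transparency then requires that each of \(\theta_A\), \(\sigma_A\), and \(\gamma_A\) be recoverable as a function of the coordinates assigned to its own role. This formalizes the earlier Principle of role preservation, which forbids merging two primitive roles at the state level.

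The main step is pigeonhole. There are three roles and at most two coordinates, and each coordinate serves a single role. So at least one role, say \(\Gamma\) without loss of generality, is assigned no coordinate at all. Transparency would then require \(\gamma_A\) to be a function of the empty tuple, that is, a constant.

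This contradicts Proposition~\ref{prop:coordinatewise-independent-variability}. That result supplies states \(C_1=(\theta,\sigma,\gamma_1)\) and \(C_2=(\theta,\sigma,\gamma_2)\) with \(\gamma_1\ne\gamma_2\). These two states receive identical values on every coordinate, since the coordinates all read only \(\theta\) or \(\sigma\), yet transparency demands that \(\gamma\) be recoverable. Here I will also record the observation stated just before the proposition: deleting a coordinate identifies states that ALA distinguishes.

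The main obstacle is excluding \emph{mixed} coordinates, for instance a single real number that injectively encodes two roles via a space-filling or interleaving bijection \([0,1]^2\to[0,1]\). Such an encoding is set-theoretically possible, so the argument cannot rest on cardinality. I will handle this by making explicit that the definition of role-specific coordinate excludes such encodings: a coordinate that carries two roles is exactly a merger of roles, which role transparency rules out. Without this definitional restriction the proposition would be false, which is why it is stated as \emph{conditional}.

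Finally, for minimality I will note that the ALA state itself uses exactly three coordinates, one per role. It is role-transparent by Definition~\ref{def:ala-valued}, since its coordinates are role-transparent component maps, and it satisfies independent variability by Proposition~\ref{prop:coordinatewise-independent-variability}. Three coordinates are therefore both attainable and necessary.
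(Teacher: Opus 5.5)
Your proof is correct and shares the paper's pigeonhole skeleton, but it splits the cases differently. The paper keeps both pigeonhole outcomes and refutes each. A role with no coordinate is unrecoverable, so transparency fails. Two roles sharing one coordinate violate independent variability, since moving one role either moves the shared coordinate, ``which also changes the second role,'' or leaves it fixed and hides the change.

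You instead build ``one role per coordinate, reading only that role's score'' into the notion of a role-specific coordinate. Only the empty-role case then survives, and you refute it by using Proposition~\ref{prop:coordinatewise-independent-variability} to exhibit two states that every remaining coordinate identifies.

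Your route is clearer about where the real content lies. The paper's shared-coordinate step tacitly assumes that a merged coordinate cannot encode two roles separably. An injective interleaving code defeats that assumption, as the paper itself concedes for scalar codes in the discussion after Proposition~\ref{prop:no-monotone-scalar-role-transparent}. You state the exclusion as an explicit hypothesis and correctly observe that the proposition fails without it.

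The cost is that, once role-specificity is definitional, the count is nearly immediate. The paper at least tries to derive the exclusion from independent variability. That attempt would need an order-coherence assumption, of the kind imposed in Proposition~\ref{prop:no-monotone-scalar-role-transparent}, to be rigorous.

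Your closing check that the ALA state itself is role-transparent and independently variable also supplies what the paper's final sentence leaves implicit. One small slip: a space-filling curve is a continuous surjection $[0,1]\to[0,1]^2$, not a bijection; the interleaving code alone makes your point.
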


\begin{proof}
Assume, for contradiction, that a role-transparent representation with fewer than three role-specific coordinates exists. Since there are three primitive roles and fewer than three coordinates, the pigeonhole principle implies that at least two roles must be assigned to the same coordinate, or one role must have no coordinate assigned to it. If one role has no coordinate assigned to it, that role is not recoverable from the state, so role transparency fails. If two roles share a coordinate, then a change in one of these roles while the other remains fixed cannot be represented without either changing the shared coordinate, which also changes the second role, or leaving the shared coordinate unchanged, which hides the first change. In both cases independent variability fails. Hence any role-transparent representation supporting the three independent roles requires at least three role-specific coordinates. ALA uses exactly three coordinates, so it is semantically minimal. 
\end{proof}

The result should be read in this explicitly conditional sense. It does not claim that every uncertainty model must have three coordinates, nor does Proposition~\ref{prop:coordinatewise-independent-variability} establish empirical independence by itself. It claims that once the three roles \(\Theta\), \(\Sigma\), and \(\Gamma\) are adopted as distinct, independently variable, and directly recoverable, a transparent coordinate representation requires three role-specific coordinates. A scalar representation can still be produced, but only as a projection of the triadic state.

\begin{proposition}[No monotone scalar role-transparent representation]
\label{prop:no-monotone-scalar-role-transparent}
Let \(\rho_A=1-\gamma_A\) denote the order-aligned permissiveness coordinate.
There do not exist a scalar representation
\[
s:\cA\to[0,1]
\]
and reconstruction maps
\[
F_\theta,F_\sigma,F_\rho:[0,1]\to[0,1],
\]
each monotone, either nondecreasing or nonincreasing, such that, for every
ALA state \(A\in\cA\),
\[
\theta_A=F_\theta(s(A)),\qquad
\sigma_A=F_\sigma(s(A)),\qquad
\rho_A=F_\rho(s(A)).
\]
Consequently, no monotone scalar representation can be role-transparent for
the independently variable ALA roles on the whole state space.
\end{proposition}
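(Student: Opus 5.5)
We argue by contradiction and only need monotonicity of \(F_\theta\) and \(F_\sigma\). Suppose \(s\), \(F_\theta\), \(F_\sigma\), and \(F_\rho\) exist as stated.

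First, we observe that the map \(\Psi(A)=(\theta_A,\sigma_A,\rho_A)\) factors through \(s\) as \(A\mapsto (F_\theta,F_\sigma,F_\rho)(s(A))\). Since \(\Psi\) is a bijection of \(\cA\) onto \([0,1]^3\) (Proposition~\ref{prop:order-alignment} and the lattice theorem), the map \(t\mapsto (F_\theta(t),F_\sigma(t))\), restricted to the image \(s(\cA)\subseteq[0,1]\), must be surjective onto \([0,1]^2\). In particular, for every pair \((a,b)\in[0,1]^2\) there is some \(t\) with \(F_\theta(t)=a\) and \(F_\sigma(t)=b\).

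The main step contradicts this surjectivity using only monotonicity. Independent variability (Proposition~\ref{prop:coordinatewise-independent-variability}) is the semantic motivation, but the formal obstacle is dimensional. By symmetry we may assume \(F_\theta\) is nondecreasing; replace \(F_\sigma\) by \(1-F_\sigma\) if necessary so that both maps have the same monotonicity direction, or handle the two cases separately.

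\emph{Same direction.} Take the states \(P=(1,0,\gamma)\) and \(Q=(0,1,\gamma)\), and set \(t_P=s(P)\) and \(t_Q=s(Q)\). Since \(F_\theta(t_P)=1>0=F_\theta(t_Q)\), we have \(t_P\neq t_Q\), and nondecreasing \(F_\theta\) forces \(t_P>t_Q\). Then nondecreasing \(F_\sigma\) gives \(F_\sigma(t_P)\ge F_\sigma(t_Q)=1\), contradicting \(F_\sigma(t_P)=0\).

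\emph{Opposite directions.} Use the states \((0,0,\gamma)\) and \((1,1,\gamma)\) in the same way. The two parameters must be ordered one way by \(F_\theta\) and the opposite way by \(F_\sigma\), which is impossible.

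Thus the pair \((F_\theta,F_\sigma)\), both monotone in a single scalar, can only trace a monotone chain in \([0,1]^2\). It cannot contain both an antichain pair and a chain pair. We pick whichever witnessing pair fits the orientation, which settles the contradiction.

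Finally, we conclude that no monotone scalar representation reconstructs the roles, so role transparency fails on the whole state space. We remark that without monotonicity, a space-filling or Cantor-type bijection \([0,1]^3\to[0,1]\) would defeat the claim, so monotonicity is exactly where the work lies. The only delicate point is the case split on monotonicity directions, and the choice of the two witness states handles it. The reconstruction \(F_\rho\) is not even needed.
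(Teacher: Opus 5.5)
Your proof is correct and follows essentially the same route as the paper. You make \(F_\theta\) nondecreasing by symmetry (the paper does this by passing to \(1-s\)), then split on the orientation of \(F_\sigma\): an antichain pair in the \((\theta,\sigma)\)-plane refutes the same-orientation case, and a chain pair refutes the opposite-orientation case, both at fixed \(\gamma\) and without using \(F_\rho\). The paper merely merges your two witness pairs into three interior states \(P=(0.20,0.50,0.50)\), \(Q=(0.80,0.80,0.50)\), \(R=(0.80,0.20,0.50)\) that share the point \(P\); your opening surjectivity observation is true but is never used and can be dropped.
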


\begin{proof}
Suppose that such \(s\) and reconstruction maps exist. Replacing \(s\) by
\(1-s\), if necessary, we may assume that \(F_\theta\) is nondecreasing; the
other reconstruction maps remain monotone, possibly with reversed
orientation. Consider the three ALA states
\[
P=(0.20,0.50,0.50),\qquad
Q=(0.80,0.80,0.50),\qquad
R=(0.80,0.20,0.50),
\]
where the coordinates are ordered as \((\theta,\sigma,\gamma)\). These
states all belong to \(\cA=[0,1]^3\). Since
\[
\theta_P<\theta_Q
\qquad\text{and}\qquad
\theta_P<\theta_R,
\]
and \(F_\theta\) is nondecreasing, it follows that
\[
s(P)<s(Q)
\qquad\text{and}\qquad
s(P)<s(R).
\]

If \(F_\sigma\) is nondecreasing, then \(s(P)<s(R)\) implies
\[
\sigma_P=F_\sigma(s(P))\le F_\sigma(s(R))=\sigma_R,
\]
which contradicts
\[
\sigma_P=0.50>0.20=\sigma_R.
\]
If \(F_\sigma\) is nonincreasing, then \(s(P)<s(Q)\) implies
\[
\sigma_P=F_\sigma(s(P))\ge F_\sigma(s(Q))=\sigma_Q,
\]
which contradicts
\[
\sigma_P=0.50<0.80=\sigma_Q.
\]
Both possible monotone orientations of \(F_\sigma\) lead to a contradiction.
Hence no such scalar role-transparent representation exists.
\end{proof}

Two qualifications explain why monotone reconstruction is required in this statement. First, monotonicity itself is required.
Without an order-coherence condition, the set-theoretic fact that
\([0,1]\) and \([0,1]^3\) have the same cardinality would permit an
order-incoherent scalar code, such as a digit-interleaving code, from which
all three coordinates could be recovered. Such a code is not a meaningful
membership-like scalar grade. Second, both monotone orientations must be
allowed. A statement restricted only to nondecreasing reconstructions would
be too weak, since a scalar may legitimately run against an antagonistic
role. The proposition therefore rules out every monotone orientation, which
is why three witness states are used rather than only two. In fact, the
argument uses only the pair \((\theta,\sigma)\); the obstruction already
arises from any single antagonistically coupled pair of roles, and
\(F_\rho\) plays no role in the contradiction.

The result does not contradict the admissible scalar projections of
Section~\ref{sec:scalar-projection}. A weighted readout such as \(\phi_w\)
is a monotone scalar interface value, but it is not intended to recover all
three roles. It is a projection of a preserved ALA state for a declared
reporting interface, not a role-transparent representation of the state
itself.

The third coordinate has reversed polarity in the cognitive order because it is epistemic restraint. A lower value of \(\gamma\) means less restraint against commitment, whereas a higher value means stronger restraint. Thus, all else being equal, increasing \(\gamma\) does not strengthen a positive judgment. It makes the state more cautious.

\begin{proposition}[Polarity necessity]
\label{thm:polarity-necessity}
Suppose that \(\gamma\) is interpreted as epistemic restraint and that the cognitive order is intended to compare states by increasing cognitive permissiveness. Then the restraint coordinate must enter the order with reversed polarity. That is, for states differing only in restraint,
\[
(\theta,\sigma,\gamma_1) \preceqA (\theta,\sigma,\gamma_2)
\quad \Longleftrightarrow \quad
\gamma_1\ge \gamma_2.
\]
\end{proposition}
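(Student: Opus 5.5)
The plan is to establish the biconditional in two directions. One direction is a direct computation in the cognitive order. The other uses the interpretive hypothesis that the order ranks states by increasing cognitive permissiveness, while epistemic restraint is, by definition, a permission-limiting role.

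First, I would make the hypothesis precise. For fixed $(\theta,\sigma)$, write $S_\gamma=(\theta,\sigma,\gamma)$. Definition of epistemic restraint and the support--permission separation principle say that a larger $\gamma$ means that the same support is withheld more strongly from commitment. So, as an adequacy postulate, the restraint-only family $\{S_\gamma\}$ must be ranked so that $S_{\gamma_2}$ is at least as permissive as $S_{\gamma_1}$ exactly when $\gamma_2\le\gamma_1$. The proposition is then the statement that $\preceqA$ realizes this ranking.

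For the direction ($\Leftarrow$), suppose $\gamma_1\ge\gamma_2$. The first two coordinates coincide, so $\theta\le\theta$ and $\sigma\le\sigma$ hold trivially. The third condition in Eq.~\eqref{eq:cognitive-order} is exactly $\gamma_1\ge\gamma_2$, which gives $S_{\gamma_1}\preceqA S_{\gamma_2}$. Equivalently, by Proposition~\ref{prop:order-alignment}, $\Psi(S_{\gamma_1})\le_{\mathrm{cw}}\Psi(S_{\gamma_2})$ because $1-\gamma_1\le 1-\gamma_2$.

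For the direction ($\Rightarrow$), suppose $S_{\gamma_1}\preceqA S_{\gamma_2}$. Reading off the third clause of the definition gives $\gamma_1\ge\gamma_2$ directly.

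The substantive content, however, is the necessity claim: any order meant to track permissiveness must have this orientation. To show this, I would argue by contradiction. Suppose an alternative order agreed with $\preceqA$ on $\theta$ and $\sigma$ but were increasing in $\gamma$. Then for $\gamma_1<\gamma_2$ it would place $S_{\gamma_2}$ strictly above $S_{\gamma_1}$, declaring the more restrained state more permissive. That contradicts the interpretation of $\gamma$, exactly as in the polarity-incompatibility proposition for confidence. An order that is indifferent in $\gamma$ fails to represent any variation in restraint. This conflicts with independent role variability (Proposition~\ref{prop:coordinatewise-independent-variability}) together with the requirement that role variation be operationally salient, and it would make strictly admissible projections blind to $\gamma$, as in the pure-valuation counterexample.

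The main obstacle is this necessity step. It is not a formal deduction from axioms but a consequence of the semantic hypothesis, so the care needed lies in stating that hypothesis as an explicit monotonicity requirement, namely that the order is antitone in restraint along restraint-only variations. Once it is stated that way, the biconditional follows immediately from the definition of $\preceqA$.
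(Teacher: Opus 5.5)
Your proposal is correct and follows essentially the paper's route. The paper's proof is also a semantic argument: because restraint brakes premature commitment, the more restrained of two otherwise identical states must sit lower in a permissiveness order, and the opposite polarity would rank stronger restraint as more permissive. Your explicit check of both directions from Eq.~\eqref{eq:cognitive-order} and your treatment of a $\gamma$-indifferent alternative order are sound elaborations that the paper leaves implicit or omits.
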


\begin{proof}
Fix \(\theta\) and \(\sigma\), and consider two states that differ only in restraint. If \(\gamma_1>\gamma_2\), then the first state carries stronger epistemic restraint than the second. Since restraint is a brake on premature commitment, the first state cannot be more cognitively permissive than the second when valuation and contextual alignment are identical. Therefore, in an order that increases with cognitive permissiveness, the state with larger restraint must be placed lower, not higher. Hence
\[
(\theta,\sigma,\gamma_1) \preceqA (\theta,\sigma,\gamma_2)
\]
whenever \(\gamma_1\ge \gamma_2\). Using the opposite polarity would rank stronger restraint as stronger support, contradicting the semantics of restraint. 
\end{proof}

This result explains why the order-aligned coordinate
\[
\rho=1-\gamma
\]
is useful. The coordinate \(\rho\) is not a new primitive role. It is a technical re-expression of restraint in an order-increasing direction. The primitive semantic coordinate remains \(\gamma\), because the framework is designed to preserve epistemic restraint explicitly.

The reversed polarity is the strongest formal anchor distinguishing \(\gamma\) from a confidence- or support-type coordinate, but its force should be stated carefully. Proposition~\ref{thm:polarity-necessity} is conditional on the semantic reading of the third coordinate as restraint: once \(\gamma\) is taken to measure resistance to commitment, its reversed orientation in a permissiveness order follows. Under the change of variable \(\rho=1-\gamma\), the third coordinate becomes positively oriented, so the orientation by itself is a coordinate convention rather than an absolute feature. What is not a convention is the semantic decision to preserve restraint, rather than permissiveness, as the primitive role, together with the consequence that increasing this primitive lowers permissiveness. The polarity argument is therefore a semantic anchor with a genuine order-theoretic shadow, not a representation-independent impossibility result, and it is presented in that spirit throughout.

\subsection{Boundaries of the Triad}

The coordinate \(\gamma\) is not confidence, reliability, non-membership, hesitation, or indeterminacy, nor a second degree of truth, a degree of falsity, or a measure of missing information. It is the degree to which the judgment should resist premature commitment under the current epistemic and operational conditions.

Confidence usually concerns the strength or stability of support behind a judgment. Reliability usually concerns the credibility of a source or measurement process. Non-membership concerns evidence against belonging. Hesitation concerns the unresolved gap between membership and non-membership in an orthopair setting. Indeterminacy concerns incomplete, inconsistent, or undecided truth status. Epistemic restraint is different from all of these. It can be high even when confidence is high, for example when a medical decision is irreversible or high-stakes. It can also be low when confidence is moderate but the action is reversible, exploratory, or low-cost.

One of the closest philosophical ancestors of this coordinate is Isaac Levi's pragmatist, decision-theoretic account of belief acceptance. In Levi's framework, evidential support alone does not determine whether a proposition should enter the corpus of full belief. Acceptance is a cognitive decision that balances the informational value of an answer against the risk of error; accordingly, even a strongly supported proposition may rationally be withheld from full belief \cite{LeviGamblingTruth1967,LeviEnterprise1980}. Suspension of judgment is therefore a rationally available doxastic stance rather than merely a failure to form a belief. Because full beliefs remain corrigible, an inquirer may also contract a previously accepted belief and return to suspension \cite{LeviMildContraction2004}.

ALA's epistemic restraint extends this line of thought without being reducible to it. Levi's framework regulates doxastic commitment: whether a proposition enters the corpus of full belief. The coordinate \(\gamma\) regulates operational commitment: whether an interpretively supported judgment is permitted to govern action, intervention, classification, or another consequential output. These two normative layers can diverge. In the clinical reading discussed above, \(\theta\) and \(\sigma\) may both be near one, and the interpretive judgment that the lesion is suspicious may even be accepted as a full belief, while \(\gamma\) remains high because the relevant operational policy requires independent confirmation before an irreversible intervention. Restraint here is not doubt about the truth of the interpretation; it is a justified limit on its operational use. Thus \(\gamma\) is neither a confidence degree nor a doxastic acceptance indicator. It belongs to the distinct normative layer captured by the support--permission separation principle: the grounds that justify an interpretation and the conditions that authorize its use need not coincide.

\begin{table}[!htbp]
\centering
\caption{Semantic distinction between epistemic restraint and neighboring notions.}
\label{tab:restraint-distinction}
\fontsize{10}{12}\selectfont
\setlength{\tabcolsep}{3pt}
\renewcommand{\arraystretch}{1.14}
\begin{tabularx}{\textwidth}{>{\raggedright\arraybackslash}p{1.18in}>{\raggedright\arraybackslash}X>{\raggedright\arraybackslash}X}
\toprule
Concept & What it measures & Why it is not \(\gamma\) in ALA \\
\midrule
Confidence & Strength, stability, or subjective certainty of support for a judgment & A highly confident judgment may still require strong restraint when action is risky, irreversible, or ethically costly. \\
Reliability & Trustworthiness of a source, instrument, or information channel & ALA restraint concerns commitment under epistemic and operational conditions, not only the source quality of evidence. \\
Non-membership & Degree to which an element does not belong to a concept & Restraint does not assert the opposite of valuation. It may increase even when valuation is high. \\
Hesitation & Unallocated or unresolved space between membership and non-membership & Restraint is not a residual gap. It is an explicit role governing cautious commitment. \\
Indeterminacy & Incomplete, inconsistent, or undecided truth status & Restraint can arise from stakes and reversibility even when the truth status is comparatively clear. \\
\bottomrule
\end{tabularx}
\end{table}

Table~\ref{tab:restraint-distinction} clarifies why replacing \(\gamma\) by a confidence score would reverse the conceptual direction of the third coordinate. If \(\gamma\) were confidence, larger values would normally support stronger commitment. In ALA, larger values impose greater restraint. This is precisely why the cognitive order uses reversed restraint polarity.

The semantic minimality result shows why fewer than three coordinates are insufficient. It remains to explain why ALA does not introduce a fourth primitive coordinate. The reason is methodological. A primitive coordinate should be added only when it corresponds to a new interpretive role that is independent, non-derivable, and required for the operations of the theory. Otherwise, the additional quantity belongs to one of three places: an elicitation model, a metadata layer, or a task-dependent projection policy.

For example, source reliability may be used to elicit \(\theta\), \(\sigma\), or \(\gamma\), but it is not identical to any one of them and need not be a primitive ALA coordinate. Probability may inform an assessment, but ALA states are not probability triples. Indeterminacy may influence restraint or contextual alignment in a particular application, but it is not the same role as epistemic restraint. Decision attitude may determine a projection operator, but projection is a delayed scalar interface, not part of the primitive state.

\begin{proposition}[No fourth coordinate without a fourth primitive role]
\label{prop:no-fourth-coordinate}
A fourth coordinate is structurally necessary for ALA only if a fourth primitive interpretive role is specified that is independent of valuation, contextual alignment, and epistemic restraint, and that cannot be represented through elicitation, metadata, or projection policy.
\end{proposition}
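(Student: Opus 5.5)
The claim is a necessary-condition statement, so I will prove its contrapositive. Suppose no fourth primitive role meeting the stated conditions exists. Then a fourth coordinate is not structurally required, because every candidate quantity can be handled inside the three-role architecture. I will make ``structurally necessary'' precise by reusing the two requirements behind Proposition~\ref{thm:semantic-minimality}. A coordinate is structurally necessary when dropping it breaks one of two things: role transparency (every primitive role is directly recoverable from the state), or the operations of the calculus (order, lattice, complement, projection, aggregation), which act only on the state $A\in\cA$.

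\textbf{Step 1: classify a candidate fourth quantity $q$.} There are three possibilities.
\begin{enumerate}
\item[(i)] $q$ realizes no new primitive role. It is then an input from which $\theta$, $\sigma$ or $\gamma$ is elicited (reliability, probability, indeterminacy). By Principle~\ref{prin:role-measurement} and the boundary-scope discussion of Section~\ref{subsec:elicitation}, such a factor is discharged once the state is formed, so no ALA operation reads it.
\item[(ii)] $q$ is descriptive information about the assessment that no operation uses. It belongs to a metadata layer, and removing it changes neither the order of Proposition~\ref{prop:order-alignment} nor any lattice or projection result.
\item[(iii)] $q$ is a decision attitude. It enters only through the projection parameters $(w,\eta)$ of Principle~\ref{prin:policy-complete-reporting}, and by Remark~\ref{rem:projection-not-aggregation} those parameters are not part of the state.
\end{enumerate}

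\textbf{Step 2: rule out the remaining case.} Suppose $q$ fits none of (i)--(iii) and is not independent of the three roles. Then $q$ is functionally determined by $(\theta,\sigma,\gamma)$, or it is absorbed into one of them. In either event $q$ is recoverable from the triad, so adding it as a coordinate creates redundancy, not transparency. Independent variability (Proposition~\ref{prop:coordinatewise-independent-variability}) is already met by the triad.

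Conversely, if $q$ is independent, non-derivable, and outside elicitation, metadata and projection, it is precisely a fourth primitive role. The pigeonhole argument of Proposition~\ref{thm:semantic-minimality}, rerun with four roles, then shows that a fourth coordinate is needed. This recovers the stated ``only if''.

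\textbf{Main obstacle.} The difficulty is that ``cannot be represented through elicitation, metadata, or projection policy'' is not a formally closed notion. The real work is stating the trichotomy of Step~1 as an exhaustive definition: every non-primitive quantity either feeds the state, annotates it, or parameterizes an interface. I would secure exhaustiveness by reading it off Fig.~\ref{fig:elicitation-state-projection}. The calculus has exactly three loci, namely upstream of the state, beside it, and downstream of it, plus the state itself, which consists only of the three roles. The proposition is therefore conditional, like Proposition~\ref{thm:semantic-minimality}, and I would present it in that spirit.
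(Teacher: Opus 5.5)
Your proof is correct and follows essentially the same route as the paper: an exhaustive case split that places any candidate fourth quantity in the derivable, elicitation, or projection-policy layer, so that only a genuinely independent new role could require a new coordinate. You also treat the metadata case explicitly, which the paper's proof leaves implicit. One correction: your ``Conversely'' paragraph proves the converse (a fourth primitive role forces a fourth coordinate), not the ``only if''; the ``only if'' is already delivered by your contrapositive in Steps~1--2, so that paragraph is superfluous and should not be labelled as recovering the statement.
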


\begin{proof}
If a proposed fourth quantity is a function of \(\theta\), \(\sigma\), and \(\gamma\), then it is derivable and does not require a primitive coordinate. If it is used only to estimate one of the three coordinates, then it belongs to the elicitation layer. If it controls how a triadic state is reported or ranked, then it belongs to the projection policy. Therefore it becomes a primitive coordinate only when it represents a new role that remains independent of the existing three and is required at the state level. 
\end{proof}

Thus ALA is open to application-specific metadata, but it does not confuse metadata with primitive representation. The triad is minimal for the three roles that define the framework.

\begin{remark}[Semantic triadicity versus carrier complexity]
\label{rem:triadicity-versus-carrier-complexity}
The minimality of the ALA triad concerns the number of primitive judgment roles, not the number of scalar parameters or internal components used to represent a role. An ALA coordinate may itself be set-valued, interval-valued, multicomponent, or function-valued, provided that its native order and rolewise operations are declared. For example, an interval-valued coordinate contains lower and upper bounds, but those bounds remain internal components of one role. Such internal components do not constitute fourth, fifth, or further ALA coordinates unless they are assigned independent primitive roles with their own global behavior in the cognitive order, complement, projection, arithmetic, and aggregation.

Accordingly, saying that epistemic restraint is not confidence does not prohibit an application from using a declared internal qualification when eliciting or representing restraint. The qualification must, however, remain internal to that role and must not silently replace contextual alignment or operational restraint. ALA is therefore semantically triadic while remaining open to structured role carriers that satisfy the compatibility conditions stated in Remark~\ref{rem:carrier-independent-architecture}.
\end{remark}

\begin{remark}[Primitiveness and elicitation]
\label{rem:primitiveness-elicitation}
Primitiveness in ALA is a claim about an irreducible semantic role within the calculus, not about the absence of measurement or elicitation routes. A coordinate may be elicited from domain-specific factors while remaining primitive, provided that those factors do not enter the ALA state space and that the elicited coordinate participates directly in all state-level operations. As established in Section~\ref{subsec:elicitation}, elicitation factors have boundary scope and are discharged once the state is formed, whereas the three coordinates have global operative scope. The structural necessity established here therefore concerns the irreducible roles, not the elicitation routes: it is the irreducibility of valuation, contextual alignment, and epistemic restraint as distinct semantic roles that requires three coordinates, independently of how any particular coordinate value is obtained in a given domain.
\end{remark}

\begin{remark}[Why elicitation factors are not coordinates]
\label{rem:factors-not-coordinates}
The same criterion settles the mirror-image question of Proposition~\ref{prop:no-fourth-coordinate}: if restraint is elicited from factors such as loss severity and irreversibility, why are those factors not themselves the primitive coordinates, with \(\gamma\) derived? Because primitiveness in ALA is not computational independence. A quantity is a primitive coordinate of ALA when it carries a distinct structural function in the calculus, namely its own position in the cognitive order, its own operational polarity, and its own behavior under complement, projection, and aggregation, and not merely when it is conceptually distinct from other quantities. The factors that inform \(\gamma\) are heterogeneous reasons that all bear on a single role: each pushes restraint in the same direction, each enters the calculus only through its effect on \(\gamma\), and none has a separate position in the order, a separate polarity, or a separate complement behavior. They are therefore factors of one role, not several roles. Valuation, contextual alignment, and restraint, by contrast, each carry a distinct structural function; in particular, restraint alone carries the reversed polarity of Proposition~\ref{thm:polarity-necessity}, which no content-directed or context-directed factor possesses. This is why they are three coordinates rather than one elaborated coordinate or a longer list of sub-coordinates. The criterion that excludes a spurious fourth coordinate in Proposition~\ref{prop:no-fourth-coordinate} is thus the same criterion that forbids promoting an elicitation factor to a coordinate: role-distinctness within the calculus, not computational independence. Read this way, the structural-necessity argument and the elicitation discipline are not in tension; they are two applications of one criterion, and the question ``why this coordinate and not its sub-factors?'' is answered by observing that the sub-factors are not roles.
\end{remark}

\section{Role-Preserving Arithmetic Operations}
\label{sec:arithmetic}
\label{sec:arithmetic-core}

This section develops the first operational layer of the theory: arithmetic operations that preserve the typed meaning of the coordinates of an ALA state \(A\in\cA\), rather than the ordinary arithmetic of real numbers imposed on \([0,1]^3\). The valuation and contextual alignment coordinates are positively oriented. Larger values of \(\theta\) and \(\sigma\) support stronger cognitive dominance. The restraint coordinate is negatively oriented. Larger values of \(\gamma\) indicate stronger epistemic restraint and therefore weaker cognitive permissiveness. ALA arithmetic must respect this polarity. Supplementary diagnostics explaining why projection equalization and coordinate deletion weaken this operational layer are collected in Appendix~\hyperref[app:diagnostic-details]{A}.

\begin{principle}[Role-local calculus under a global cognitive policy]
\label{prin:role-local-calculus}
Each primitive ALA role may carry its own admissible order, arithmetic, and aggregation calculus. For heterogeneous carriers, an operation is performed locally within each role and assembled through the typed product:
\[
F_{\mathcal A^{\star}}
(A_1^{\star},\ldots,A_n^{\star})
=
\bigl(
F_\theta(\theta_1,\ldots,\theta_n),
F_\sigma(\sigma_1,\ldots,\sigma_n),
F_\gamma(\gamma_1,\ldots,\gamma_n)
\bigr).
\]
A common carrier or embedding between different roles is neither required nor semantically desirable before a declared cross-role interface is imposed. What must remain common is the higher-level ALA policy: a permissive construction moves valuation and contextual alignment upward while moving restraint downward, whereas a conservative construction moves valuation and contextual alignment downward while moving restraint upward. ALA therefore requires role-equivalent operations, not formula-identical operations.
\end{principle}

For example, if valuation is represented by one bounded ordered carrier, contextual alignment by intervals, and restraint by another declared ordered carrier, each coordinate is processed by the native arithmetic of its own role. The output remains heterogeneous and role typed until the rolewise standardization of Eq.~\eqref{eq:rolewise-standardization} is deliberately applied. Cross-role heterogeneity requires no common carrier. If heterogeneous forms occur within the same role across different inputs, however, a declared common super-carrier, embedding, or compatibility rule is needed so that the native operation on that role is mathematically defined.

\subsection{Operational Connectors}
\label{subsec:arithmetic-design}

An operation on ALA states is acceptable only if it respects the semantic typing of the state. The output valuation must be obtained from valuation inputs, the output contextual alignment from contextual-alignment inputs, and the output epistemic restraint from restraint inputs. This requirement prevents semantic mixing between concept valuation, contextual fit, and cautious commitment.

\begin{principle}[Role-preserving arithmetic]
\label{principle:role-preserving-arithmetic}
An arithmetic operation on ALA states is role-preserving if the output \(\theta\)-coordinate depends only on input \(\theta\)-coordinates, the output \(\sigma\)-coordinate depends only on input \(\sigma\)-coordinates, and the output \(\gamma\)-coordinate depends only on input \(\gamma\)-coordinates.
\end{principle}

\begin{principle}[Polarity consistency]
\label{principle:polarity-consistency}
An arithmetic operation on ALA states is polarity-consistent if it treats \(\theta\) and \(\sigma\) as positively oriented coordinates and \(\gamma\) as a reversed restraint coordinate. Thus a permissive accumulation should increase valuation and contextual alignment while decreasing evidence-responsive restraint, whereas a conservative fusion should restrict valuation and contextual alignment while increasing restraint.
\end{principle}

\begin{remark}[Non-removable restraint floors under a fixed commitment interface]
\label{rem:restraint-floor}
Some sources of restraint, such as irreversibility, legal prohibition, or a fixed ethical burden, do not disappear when evidence accumulates. For a fixed commitment interface \(p\), let \(\underline{\gamma}_p\in[0,1)\) denote a declared non-removable restraint floor and restrict the third coordinate to \([\underline{\gamma}_p,1]\). Define the order isomorphism
\[
r_p(x)=\frac{x-\underline{\gamma}_p}{1-\underline{\gamma}_p}
\]
and the floor-guarded connectors
\[
H_p(x,y)
=
\underline{\gamma}_p+
(1-\underline{\gamma}_p)
H\bigl(r_p(x),r_p(y)\bigr),
\]
\[
S_p(x,y)
=
\underline{\gamma}_p+
(1-\underline{\gamma}_p)
S\bigl(r_p(x),r_p(y)\bigr).
\]
Then \(H_p(x,y)\geq\underline{\gamma}_p\) and \(S_p(x,y)\geq\underline{\gamma}_p\). The first relaxes only the evidence-responsive component of restraint and can never cross the declared floor; the second accumulates restraint above that floor. All order and monoid properties below transport to the fixed-interface slice through \(r_p\). To avoid notation overload, the formulas that follow present the canonical zero-floor realization \(\underline{\gamma}_p=0\). In a high-stakes application with a nonzero floor, the corresponding floor-guarded connectors should be used, or equivalently the arithmetic should be applied to the normalized residual restraint and then mapped back.
\end{remark}

These principles lead to two complementary arithmetic attitudes. The first is permissive: it accumulates supportive valuation and contextual alignment and relaxes restraint when evidence is accumulated. The second is conservative: it restricts valuation and contextual alignment and accumulates restraint under conjunctive or safety-sensitive use. The next subsection introduces the scalar connectors used to implement these two attitudes.

Two standard scalar connectors are used on \([0,1]\). The first is the algebraic sum
\begin{equation}
S(x,y)=x+y-xy=1-(1-x)(1-y).
\label{eq:algebraic-sum}
\end{equation}
The second is the Hamacher product with parameter zero,
\begin{equation}
H(x,y)=
\begin{cases}
\dfrac{xy}{x+y-xy}, & (x,y)\ne(0,0),\\[7pt]
0, & (x,y)=(0,0).
\end{cases}
\label{eq:hamacher-product}
\end{equation}
The convention \(H(0,0)=0\) gives the natural continuous boundary value at the origin.

\begin{lemma}[Basic properties of \(S\) and \(H\)]
\label{lem:basic-S-H}
For all \(x,y,z\in[0,1]\), the functions \(S\) and \(H\) are closed on \([0,1]\), commutative, associative, and nondecreasing in each argument. Moreover,
\[
S(x,0)=x,\qquad S(x,1)=1,
\]
and
\[
H(x,1)=x,\qquad H(x,0)=0.
\]
For all \(x,y\in[0,1]\),
\[
xy\le H(x,y)\le \min\{x,y\}\le \max\{x,y\}\le S(x,y).
\]
\end{lemma}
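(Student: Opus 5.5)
The plan is to treat $S$ and $H$ separately and to reduce each to a product structure through a change of variables. For $S$, use the identity $S(x,y)=1-(1-x)(1-y)$ from Eq.~\eqref{eq:algebraic-sum}. Under the involution $u=1-x$, $S$ becomes the ordinary product on $[0,1]$. Closure, commutativity and associativity then follow at once from the corresponding properties of multiplication. Concretely, $S(S(x,y),z)=1-(1-x)(1-y)(1-z)=S(x,S(y,z))$. Monotonicity follows because $(1-x)(1-y)$ is nonincreasing in each argument. The boundary values $S(x,0)=x$ and $S(x,1)=1$ are direct substitutions.

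For $H$, the first step is to settle the domain and the convention. For $(x,y)\ne(0,0)$ the denominator satisfies $x+y-xy=S(x,y)\ge\max\{x,y\}>0$, so $H$ is well defined. When $x,y>0$, write $H(x,y)=1/(1/x+1/y-1)$. This shows that with $t=1/x-1\in[0,\infty)$ one has $1/H-1=t_x+t_y$. So on $(0,1]^2$ the map $H$ is conjugate to addition on $[0,\infty)$, and commutativity, associativity and monotonicity are inherited from addition. The remaining case is when some argument is zero. If $x=0$ and $y>0$, then $H(0,y)=0/y=0$, and $H(0,0)=0$ by convention. So $0$ is absorbing, and associativity reduces to both sides equalling $0$ whenever any of $x,y,z$ is $0$. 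Monotonicity across the boundary holds because $H\ge 0$ everywhere and $H=0$ on the axes. Closure follows from $0\le H\le\min\{x,y\}\le 1$, which is proved in the next step. The boundary identity $H(x,1)=x/(x+1-x)=x$ is immediate, and $H(x,0)=0$ was shown above.

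The chain of inequalities comes last. The middle inequality $\min\{x,y\}\le\max\{x,y\}$ is trivial. For $\max\{x,y\}\le S(x,y)$, note that $S(x,y)-x=y(1-x)\ge 0$, and symmetrically in $y$. For $xy\le H(x,y)$, use that when $(x,y)\ne(0,0)$ the denominator lies in $(0,1]$, since $x+y-xy=1-(1-x)(1-y)\le 1$; dividing $xy$ by a number at most $1$ can only increase it. For $H(x,y)\le\min\{x,y\}$, take $x\le y$ without loss of generality. The claim is $xy\le x(x+y-xy)$, which is equivalent to $x\cdot x(1-y)\ge 0$, and this holds. The origin satisfies every inequality trivially.

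The main obstacle is the associativity of $H$, together with its monotonicity at the boundary, because of the piecewise definition. The reciprocal substitution $t=1/x-1$ is the step I would rely on to avoid a brute-force rational computation. The only care needed is to check separately that $0$ is absorbing, so that the conjugacy to $([0,\infty),+)$ on the open part extends to all of $[0,1]^2$.
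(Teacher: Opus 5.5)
Your proposal is correct and follows essentially the same route as the paper: $S$ is handled through $S(x,y)=1-(1-x)(1-y)$, and $H$ through the reciprocal identity $1/H(x,y)=1/x+1/y-1$ on the positive domain, with the boundary covered by $0$ being absorbing. Your substitution $t=1/x-1$ is just that identity in additive-generator form, and you supply the details the paper calls ``standard'', in particular the explicit checks of the inequality chain via $S(x,y)-x=y(1-x)$, $x+y-xy\le 1$, and $x(x+y-xy)-xy=x^2(1-y)\ge 0$.
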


\begin{proof}
The properties of \(S\) follow from Eq.~\eqref{eq:algebraic-sum}. For \(H\), closure follows from \(0\le xy\le x+y-xy\) whenever \((x,y)\ne(0,0)\). The boundary identities follow directly from Eq.~\eqref{eq:hamacher-product}. Commutativity is immediate. For positive \(x,y\), the reciprocal identity
\[
\frac{1}{H(x,y)}=\frac{1}{x}+\frac{1}{y}-1
\]
implies associativity; the boundary cases follow from \(H(0,x)=0\). Monotonicity is obtained by differentiating the positive-domain expression or by using the reciprocal representation. The inequality chain follows from the standard bounds for the algebraic sum and the Hamacher product on \([0,1]\).
\end{proof}

\begin{lemma}[N-ary identities]
\label{lem:nary-identities}
Let \(S_n\) and \(H_n\) denote repeated applications of \(S\) and \(H\), respectively. Then
\begin{equation}
S_n(x_1,\ldots,x_n)=1-\prod_{i=1}^{n}(1-x_i).
\label{eq:nary-S}
\end{equation}
If all \(x_i>0\), then
\begin{equation}
H_n(x_1,\ldots,x_n)^{-1}=\sum_{i=1}^{n}x_i^{-1}-(n-1).
\label{eq:nary-H}
\end{equation}
If at least one \(x_i=0\), then \(H_n(x_1,\ldots,x_n)=0\).
\end{lemma}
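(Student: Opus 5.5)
The plan is induction on \(n\), using the associativity and commutativity from Lemma~\ref{lem:basic-S-H}. Associativity guarantees that the repeated application \(S_n\) (respectively \(H_n\)) does not depend on bracketing. So we may fix the left-nested form
\[
S_{n+1}(x_1,\ldots,x_{n+1})=S\bigl(S_n(x_1,\ldots,x_n),x_{n+1}\bigr),
\]
and similarly for \(H\). The base case \(n=1\) is trivial, and the case \(n=2\) is just Eqs.~\eqref{eq:algebraic-sum} and \eqref{eq:hamacher-product}.

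For Eq.~\eqref{eq:nary-S}, write \(S(x,y)=1-(1-x)(1-y)\). Assume the induction hypothesis \(1-S_n=\prod_{i\le n}(1-x_i)\). Then
\[
1-S_{n+1}=(1-S_n)(1-x_{n+1})=\prod_{i\le n+1}(1-x_i).
\]
This step is routine.

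For Eq.~\eqref{eq:nary-H}, I work with reciprocals. The first task is to check that \(H\) maps positive pairs to positive values. If \(x,y>0\), then \(x+y-xy\) is positive and at most \(1\), so \(H(x,y)\ge xy>0\). By induction, \(H_n>0\) whenever all \(x_i>0\). The reciprocal identity from the proof of Lemma~\ref{lem:basic-S-H}, namely \(1/H(x,y)=1/x+1/y-1\), then applies at every stage. Substituting the induction hypothesis \(H_n^{-1}=\sum_{i\le n}x_i^{-1}-(n-1)\) gives
\[
H_{n+1}^{-1}=H_n^{-1}+x_{n+1}^{-1}-1=\sum_{i\le n+1}x_i^{-1}-n.
\]
For the zero case, suppose some \(x_j=0\). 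By commutativity and associativity, I can move \(x_j\) to the last position. The absorbing property \(H(x,0)=0\) from Lemma~\ref{lem:basic-S-H} then yields \(H_n=0\).

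The only point needing care is this positivity and well-definedness check. Positivity of \(H_n\) is what justifies using the reciprocal form at each step. I also need \(H(0,0)=0\), which is the paper's convention, to make the zero case consistent with absorption. Once these are in place, the rest is mechanical.
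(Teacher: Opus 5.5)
Your proof is correct and follows the paper's route: induction on \(n\) via \(S(x,y)=1-(1-x)(1-y)\) for the first identity, induction via the reciprocal identity \(1/H(x,y)=1/x+1/y-1\) for the second, and the absorbing property of \(0\) for the zero case. Your positivity check (\(H(x,y)\ge xy>0\) for \(x,y>0\)) and your remark that associativity makes \(S_n\) and \(H_n\) independent of bracketing are details the paper leaves implicit, and they are exactly what its short argument relies on.
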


\begin{proof}
Eq.~\eqref{eq:nary-S} follows by induction from \(S(x,y)=1-(1-x)(1-y)\). Eq.~\eqref{eq:nary-H} follows by induction from the reciprocal identity for \(H\). The zero case follows from \(H(0,x)=0\).
\end{proof}

Three requirements select the pair \((S,H)\) from the large family of triangular norms and conorms. First, strict monotonicity on the open cube, so that accumulating further support strictly rewards it and conjunctive combination strictly reflects every component; this excludes the idempotent \(\min\) and \(\max\), which are insensitive to repeated evidence. Second, smoothness on the open cube, which excludes the nilpotent Łukasiewicz operators with their kink and their zero-divisor region. Third, the absence of zero divisors for the accumulative connector, so that accumulation does not saturate prematurely except at the boundary. The algebraic sum \(S\) satisfies these requirements and is the standard strict, smooth, Archimedean conorm; for the standard theory of triangular norms and related conorms, including Archimedean and nilpotent classes, see \cite{KlementMesiarPap2000}. For the conjunctive connector, Lemma~\ref{lem:basic-S-H} shows that the Hamacher product satisfies

\[
xy\le H(x,y)\le \min\{x,y\},
\]
so \(H\) lies between the ordinary product and the minimum. Because \(xy\leq H(x,y)\), the ordinary product is the more punitive of the two; because \(H(x,y)\leq\min\{x,y\}\), the Hamacher product is more restrictive than the idempotent minimum. It therefore supplies a smooth conjunctive compromise: stricter than minimum-based filtering, but less severe than repeated ordinary multiplication. The pair \((S,H)\) is consequently adopted as a transparent strict-and-smooth permissive--conservative pair for the normalized realization.

The pair is deliberately not De Morgan matched: the algebraic sum is the probabilistic sum, whose De Morgan dual is the ordinary product rather than the Hamacher product. In a Boolean setting this would be an inconsistency; in ALA it is a structural consequence. The interpretive complement of Section~\ref{sec:cognitive-order} acts only on valuation and is not a Boolean complement, so the De Morgan laws are not expected to hold across all coordinates in the first place. The very feature that individuates ALA, a non-Boolean complement that reverses only the truth-direction, removes the constraint that would otherwise force the conjunctive and accumulative connectors to be a dual pair. The non-duality of \((S,H)\) is thus consistent with the architecture rather than in tension with it.

Finally, the present paper fixes \((S,H)\) as a canonical concrete choice rather than as the only admissible pair. Closure and cognitive monotonicity extend to broader connector families under suitable boundary, associativity, and monotonicity assumptions. The non-collapse theorem is independent of this arithmetic choice because it is established for the ALA lattice operations. By contrast, the closed aggregation formulas and the aggregation-envelope inequality of Section~\ref{sec:aggregation} are proved here specifically for the pair \((S,H)\). Extending that envelope to other parameterized families requires separate sufficient conditions and remains outside the present scope.

\subsection{The Arithmetic Operations}
The operations below act on preserved coordinates because each coordinate carries a distinct role in later computation. The role-collapse diagnostics in Appendix~\hyperref[app:diagnostic-details]{A} give the corresponding loss-of-role interpretation when a coordinate is deleted or prematurely scalarized.

\begin{definition}[Interpretive addition]
\label{def:interpretive-addition}
For \(A,B\in\cA\), the interpretive addition of \(A\) and \(B\) is defined by
\[
A\oplus B
=
\big(S(\theta_A,\theta_B),\,S(\sigma_A,\sigma_B),\,H(\gamma_A,\gamma_B)\big).
\]
\end{definition}

The operation \(\oplus\) is permissive. It accumulates valuation and contextual alignment through \(S\), while it combines epistemic restraint through \(H\). Since \(H(x,y)\le \min\{x,y\}\), repeated compatible accumulation can reduce restraint. This is not a claim that caution disappears; it is a formal way of representing the relaxation of restraint under accumulated support.

\begin{remark}[Restraint under accumulated evidence]
\label{rem:restraint-accumulation}
The relaxation of restraint in \(\oplus\) must be read within a fixed commitment interface. Corroborating judgments may reduce the evidence-responsive reason for withholding, but they do not reduce a non-removable floor generated by irreversibility, legal constraint, loss severity, or ethical burden. Accordingly, the zero-floor formula in Definition~\ref{def:interpretive-addition} is appropriate when \(\gamma\) is already the normalized residual restraint for a common interface, or when the interface has no positive restraint floor. If a floor \(\underline{\gamma}_p>0\) is declared, the third coordinate must be combined through \(H_p\) from Remark~\ref{rem:restraint-floor}. Fusion across different commitment interfaces is not a task for \(\oplus\); it belongs to a higher-level aggregation or projection policy in which the interface difference is explicitly declared.
\end{remark}

\begin{proposition}[Elementary properties of \(\oplus\)]
\label{prop:addition-properties}
For all \(A,B,C\in\cA\), the operation \(\oplus\) is closed, commutative, and associative. Its identity element is \(\bot=(0,0,1)\), and \(\top=(1,1,0)\) is absorbing:
\[
A\oplus\bot=A,\qquad A\oplus\top=\top.
\]
\end{proposition}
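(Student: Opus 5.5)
The plan is to reduce every claim to coordinatewise facts about the scalar connectors \(S\) and \(H\), already recorded in Lemma~\ref{lem:basic-S-H}. By Definition~\ref{def:interpretive-addition}, \(\oplus\) is role-preserving in the sense of Principle~\ref{principle:role-preserving-arithmetic}. Its \(\theta\)- and \(\sigma\)-outputs are values of \(S\) at the corresponding input coordinates, and its \(\gamma\)-output is a value of \(H\). So each algebraic property of \(\oplus\) splits into three independent scalar statements, two about \(S\) and one about \(H\).

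First, closure. Since \(S\) and \(H\) map \([0,1]^2\) into \([0,1]\) by Lemma~\ref{lem:basic-S-H}, each coordinate of \(A\oplus B\) lies in \([0,1]\), hence \(A\oplus B\in\cA\).

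Next, commutativity and associativity. Both \(S\) and \(H\) are commutative and associative by Lemma~\ref{lem:basic-S-H}. Comparing \((A\oplus B)\oplus C\) with \(A\oplus(B\oplus C)\) coordinate by coordinate therefore gives equality. The only subtle point is the convention \(H(0,0)=0\) in the associativity of \(H\); Lemma~\ref{lem:basic-S-H} already covers it through the boundary case \(H(0,x)=0\), so I will cite it rather than redo it. If one prefers a direct check, Lemma~\ref{lem:nary-identities} gives symmetric closed forms for \(S_3\) and \(H_3\): the product formula for \(S\), and for \(H\) the reciprocal sum when all inputs are positive, or \(0\) when some input is zero.

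Finally, the identity and absorbing elements. For \(\bot=(0,0,1)\), I will use \(S(x,0)=x\) on the first two coordinates and \(H(x,1)=x\) on the third, which gives \(A\oplus\bot=A\). Commutativity then covers \(\bot\oplus A\). For \(\top=(1,1,0)\), I will use \(S(x,1)=1\) and \(H(x,0)=0\), which gives \(A\oplus\top=(1,1,0)=\top\).

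None of these steps is a real obstacle. The only care needed is that the identity and absorbing elements work because of the reversed polarity: \(\bot\) carries \(\gamma=1\), which is the neutral element of \(H\), and \(\top\) carries \(\gamma=0\), which is the annihilator of \(H\). These are exactly the boundary identities of \(H\) already proved, so the argument matches the polarity design of the order in Section~\ref{sec:cognitive-order}.
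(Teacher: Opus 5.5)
Your proposal is correct and follows essentially the same route as the paper: a coordinatewise reduction to Lemma~\ref{lem:basic-S-H}, using \(S(x,0)=x\) and \(H(x,1)=x\) for the identity and \(S(x,1)=1\) and \(H(x,0)=0\) for absorption. Your remarks on the \(H(0,0)=0\) convention and on the role of \(\gamma=1\) at \(\bot\) and \(\gamma=0\) at \(\top\) are accurate, though they add nothing the lemma does not already supply.
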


\begin{proof}
The proof is coordinatewise. Closure, commutativity, and associativity follow from Lemma~\ref{lem:basic-S-H}. The identity follows from \(S(x,0)=x\) and \(H(x,1)=x\). The absorbing property follows from \(S(x,1)=1\) and \(H(x,0)=0\).
\end{proof}

\begin{proposition}[Cognitive monotonicity of \(\oplus\)]
\label{prop:addition-monotonicity}
If \(A\preceqA B\), then for every \(C\in\cA\),
\begin{equation}
A\oplus C\preceqA B\oplus C.
\label{eq:addition-monotone-one}
\end{equation}
More generally, if \(A_i\preceqA B_i\) for \(i=1,2\), then
\[
A_1\oplus A_2\preceqA B_1\oplus B_2.
\]
\end{proposition}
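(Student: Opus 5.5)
The plan is to work coordinatewise, using the order-alignment map \(\Psi\) of Proposition~\ref{prop:order-alignment} to read the cognitive order as three scalar inequalities: \(\theta_A\le\theta_B\), \(\sigma_A\le\sigma_B\), and \(\gamma_A\ge\gamma_B\). Since \(\oplus\) is role-preserving, the \(\theta\)-, \(\sigma\)- and \(\gamma\)-coordinates of \(A\oplus C\) are \(S(\theta_A,\theta_C)\), \(S(\sigma_A,\sigma_C)\) and \(H(\gamma_A,\gamma_C)\). So Eq.~\eqref{eq:addition-monotone-one} reduces to three scalar inequalities, one per role, with no cross-role interaction to control.

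For the first two coordinates, Lemma~\ref{lem:basic-S-H} gives that \(S\) is nondecreasing in each argument. Hence \(\theta_A\le\theta_B\) implies \(S(\theta_A,\theta_C)\le S(\theta_B,\theta_C)\), and likewise for \(\sigma\). For the restraint coordinate, the point to check is the polarity. We have \(\gamma_A\ge\gamma_B\), and we need \(H(\gamma_A,\gamma_C)\ge H(\gamma_B,\gamma_C)\), which is what the reversed order on the third coordinate requires. This again follows from monotonicity of \(H\) in its first argument (Lemma~\ref{lem:basic-S-H}), now applied with the inequality in the reversed direction. Reassembling the three inequalities through \(\Psi\) gives \(A\oplus C\preceqA B\oplus C\).

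For the two-sided statement, I would chain the one-sided version with commutativity from Proposition~\ref{prop:addition-properties}. From \(A_1\preceqA B_1\) we get \(A_1\oplus A_2\preceqA B_1\oplus A_2\). Using commutativity and \(A_2\preceqA B_2\), we get \(B_1\oplus A_2=A_2\oplus B_1\preceqA B_2\oplus B_1=B_1\oplus B_2\). Transitivity of \(\preceqA\), which holds because it is a partial order isomorphic via \(\Psi\) to the coordinatewise order, then concludes. Alternatively, joint monotonicity of \(S\) and \(H\) in both arguments gives the result directly coordinatewise.

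The only delicate step is the monotonicity of \(H\) near the origin, since \(H\) is defined piecewise with \(H(0,0)=0\). This is already asserted in Lemma~\ref{lem:basic-S-H}, so I would simply invoke it. If it needed justification, I would use that \(H(x,y)=0\) whenever one argument is \(0\) and \(H\ge 0\) everywhere, while on positive arguments the reciprocal identity \(1/H=1/x+1/y-1\) is decreasing in \(x\).
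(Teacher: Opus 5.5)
Your proof is correct and follows the paper's argument: you use coordinatewise monotonicity of \(S\) and \(H\) from Lemma~\ref{lem:basic-S-H}, read the restraint inequality in the reversed direction, and handle the two-input case either by joint monotonicity (the paper simply calls it ``identical'') or by chaining through commutativity and transitivity. Your extra boundary check of \(H\) via the reciprocal identity only re-verifies what the lemma already asserts.
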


\begin{proof}
If \(A\preceqA B\), then \(\theta_A\le\theta_B\), \(\sigma_A\le\sigma_B\), and \(\gamma_A\ge\gamma_B\). Since \(S\) and \(H\) are nondecreasing in each argument,
\[
S(\theta_A,\theta_C)\le S(\theta_B,\theta_C),\qquad
S(\sigma_A,\sigma_C)\le S(\sigma_B,\sigma_C),
\]
and
\[
H(\gamma_A,\gamma_C)\ge H(\gamma_B,\gamma_C).
\]
The last inequality is the correct direction for the cognitive order because larger \(\gamma\) means greater restraint. Therefore Eq.~\eqref{eq:addition-monotone-one} follows. The two-input statement is identical.
\end{proof}

\begin{definition}[Interpretive multiplication]
\label{def:interpretive-multiplication}
For \(A,B\in\cA\), the interpretive multiplication of \(A\) and \(B\) is defined by
\[
A\otimes B
=
\big(H(\theta_A,\theta_B),\,H(\sigma_A,\sigma_B),\,S(\gamma_A,\gamma_B)\big).
\]
\end{definition}

The operation \(\otimes\) is conservative. It restricts valuation and contextual alignment through \(H\), while it accumulates restraint through \(S\). It is therefore suited to conjunctive fusion, safety-sensitive filtering, and situations in which failure of one component should reduce the overall interpretive support.

\begin{proposition}[Elementary properties of \(\otimes\)]
\label{prop:multiplication-properties}
For all \(A,B,C\in\cA\), the operation \(\otimes\) is closed, commutative, and associative. Its identity element is \(\top=(1,1,0)\), and \(\bot=(0,0,1)\) is absorbing:
\[
A\otimes\top=A,\qquad A\otimes\bot=\bot.
\]
\end{proposition}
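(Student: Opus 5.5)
The plan is to argue coordinatewise, exactly as in the proof of Proposition~\ref{prop:addition-properties}, with the roles of \(S\) and \(H\) swapped. By Definition~\ref{def:interpretive-multiplication}, the first two coordinates of \(A\otimes B\) are obtained from \(H\) applied to valuation and contextual-alignment inputs. The third coordinate is obtained from \(S\) applied to restraint inputs. Every claimed property therefore reduces to a statement about the scalar connectors on \([0,1]\), and those are supplied by Lemma~\ref{lem:basic-S-H}.

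The steps run as follows.

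\textbf{Closure.} Lemma~\ref{lem:basic-S-H} states that \(H\) and \(S\) are closed on \([0,1]\). Hence each coordinate of \(A\otimes B\) lies in \([0,1]\), and \(A\otimes B\in\cA\).

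\textbf{Commutativity and associativity.} These also follow from Lemma~\ref{lem:basic-S-H}, applied separately in each coordinate. Since \(\otimes\) is role-preserving in the sense of Principle~\ref{principle:role-preserving-arithmetic}, the coordinates of \((A\otimes B)\otimes C\) are \(H(H(\theta_A,\theta_B),\theta_C)\), then the analogous expression for \(\sigma\), and \(S(S(\gamma_A,\gamma_B),\gamma_C)\). These coincide with the coordinates of \(A\otimes(B\otimes C)\) by associativity of \(H\) and \(S\).

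\textbf{Identity and absorption.} For the identity I compute \(A\otimes\top=(H(\theta_A,1),H(\sigma_A,1),S(\gamma_A,0))\). The boundary identities \(H(x,1)=x\) and \(S(x,0)=x\) then give \(A\otimes\top=A\). For absorption, \(A\otimes\bot=(H(\theta_A,0),H(\sigma_A,0),S(\gamma_A,1))=(0,0,1)=\bot\), using \(H(x,0)=0\) and \(S(x,1)=1\). Commutativity extends both statements to multiplication on the left.

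The only point needing care is the Hamacher convention at the origin. The boundary identities must hold when \(x=0\), for example \(H(0,1)=0\) and \(H(0,0)=0\). These cases are covered by the convention \(H(0,0)=0\) together with the zero case recorded in Lemma~\ref{lem:basic-S-H}. Associativity at boundary points likewise relies on \(H(0,x)=0\), as already noted in the proof of Lemma~\ref{lem:basic-S-H}. I therefore expect no genuine obstacle: the whole proof is a direct transcription of the one for \(\oplus\).
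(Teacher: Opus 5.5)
Your proof is correct and follows essentially the same route as the paper, which also argues coordinatewise from Lemma~\ref{lem:basic-S-H}, using \(H(x,1)=x\), \(S(x,0)=x\) for the identity \(\top\) and \(H(x,0)=0\), \(S(x,1)=1\) for the absorbing element \(\bot\). Your check of the Hamacher boundary values (\(H(0,1)=0\) directly from the formula, \(H(0,0)=0\) by convention) is a reasonable level of care that the paper leaves implicit.
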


\begin{proof}
Each property follows coordinatewise from Lemma~\ref{lem:basic-S-H}. The identity follows from \(H(x,1)=x\) and \(S(x,0)=x\). The absorbing property follows from \(H(x,0)=0\) and \(S(x,1)=1\).
\end{proof}

\begin{proposition}[Cognitive monotonicity of \(\otimes\)]
\label{prop:multiplication-monotonicity}
If \(A\preceqA B\), then for every \(C\in\cA\),
\[
A\otimes C\preceqA B\otimes C.
\]
More generally, if \(A_i\preceqA B_i\) for \(i=1,2\), then
\[
A_1\otimes A_2\preceqA B_1\otimes B_2.
\]
\end{proposition}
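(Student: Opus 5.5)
The plan mirrors the proof of Proposition~\ref{prop:addition-monotonicity}, with the roles of \(S\) and \(H\) exchanged. First unpack the hypothesis \(A\preceqA B\) through Eq.~\eqref{eq:cognitive-order}. This gives \(\theta_A\le\theta_B\), \(\sigma_A\le\sigma_B\) and \(\gamma_A\ge\gamma_B\). Then compute \(A\otimes C\) and \(B\otimes C\) coordinatewise from Definition~\ref{def:interpretive-multiplication}. The operation is role-preserving in the sense of Principle~\ref{principle:role-preserving-arithmetic}, so each output coordinate can be compared separately with no cross-role interaction.

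For the two positively oriented coordinates, invoke the monotonicity of \(H\) in each argument from Lemma~\ref{lem:basic-S-H}. This yields \(H(\theta_A,\theta_C)\le H(\theta_B,\theta_C)\) and \(H(\sigma_A,\sigma_C)\le H(\sigma_B,\sigma_C)\). For the restraint coordinate, use the monotonicity of \(S\), also from Lemma~\ref{lem:basic-S-H}. Since \(\gamma_A\ge\gamma_B\), it gives \(S(\gamma_A,\gamma_C)\ge S(\gamma_B,\gamma_C)\). This is exactly the reversed inequality that the cognitive order requires in its third coordinate, so the three inequalities together give \(A\otimes C\preceqA B\otimes C\).

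The only point needing care is the Hamacher product at the boundary. Lemma~\ref{lem:basic-S-H} asserts that \(H\) is nondecreasing on all of \([0,1]^2\), including the convention \(H(0,0)=0\). If one wanted to double-check this directly, the cases are short. On the positive domain, the reciprocal identity \(1/H=1/x+1/y-1\) is decreasing in \(x\), so \(H\) is increasing in \(x\). When an argument is \(0\), \(H=0\), which is the minimum possible value. So no separate argument is needed beyond citing the lemma.

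For the two-input statement, chain two one-sided steps and use commutativity of \(\otimes\) from Proposition~\ref{prop:multiplication-properties}. We have \(A_1\otimes A_2\preceqA B_1\otimes A_2\) by the first part with \(C=A_2\). Next, \(B_1\otimes A_2=A_2\otimes B_1\preceqA B_2\otimes B_1=B_1\otimes B_2\) by the first part with \(C=B_1\). Transitivity of \(\preceqA\), which holds because it is a partial order via the order isomorphism \(\Psi\) of Proposition~\ref{prop:order-alignment}, then concludes the proof.
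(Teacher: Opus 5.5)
Your proof is correct and takes essentially the same approach as the paper, which simply says the argument is that of Proposition~\ref{prop:addition-monotonicity} with \(H\) in the valuation and contextual coordinates and \(S\) in the restraint coordinate. For the two-input case the paper only says it is ``identical'' (monotonicity of \(H\) and \(S\) in both arguments); your route through commutativity of \(\otimes\) and transitivity of the order is an equally valid way to get it.
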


\begin{proof}
The proof is the same as the proof of Proposition~\ref{prop:addition-monotonicity}, using \(H\) in the valuation and contextual coordinates and \(S\) in the restraint coordinate.
\end{proof}

The arithmetic core continues the polarity signature of epistemic restraint. In the canonical zero-floor realization, interpretive addition accumulates valuation and contextual alignment through the algebraic sum and combines residual restraint through the Hamacher product; supportive accumulation can therefore reduce the evidence-responsive restraint barrier. Interpretive multiplication restricts valuation and contextual alignment through the Hamacher product and accumulates restraint through the algebraic sum. With a declared non-removable floor, the guarded connectors of Remark~\ref{rem:restraint-floor} preserve the same role directions without allowing total restraint to fall below the floor. This dual arithmetic behavior is consistent with \(\gamma\) as a permission-limiting coordinate rather than an assertion-supportive confidence coordinate.

Repeated interpretive addition has a closed form. This allows scalar multiplication to be defined for all nonnegative real weights, not only for positive integers.

\begin{definition}[Scalar multiplication]
\label{def:scalar-multiplication}
For \(\lambda>0\) and \(A\in\cA\), define
\begin{equation}
\lambda\odot A
=
\left(
1-(1-\theta_A)^\lambda,
\;1-(1-\sigma_A)^\lambda,
\;\frac{\gamma_A}{\lambda-(\lambda-1)\gamma_A}
\right).
\label{eq:scalar-multiplication}
\end{equation}
For \(\lambda=0\), set
\[
0\odot A=\bot.
\]
\end{definition}

For an integer \(k\ge1\), Eq.~\eqref{eq:scalar-multiplication} coincides with the repeated addition of \(A\) with itself \(k\) times. For noninteger \(\lambda\), it gives the continuous extension generated by the algebraic sum in the positive coordinates and the Hamacher product in the restraint coordinate.

\begin{proposition}[Closure and monotonicity of scalar multiplication]
\label{prop:scalar-multiplication-closure}
For every \(A\in\cA\) and \(\lambda\ge0\), \(\lambda\odot A\in\cA\). If \(A\preceqA B\), then
\[
\lambda\odot A\preceqA \lambda\odot B
\]
for every \(\lambda\ge0\).
\end{proposition}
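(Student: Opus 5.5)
The argument is coordinatewise. The case \(\lambda=0\) is immediate: \(0\odot A=\bot\in\cA\), and \(0\odot A=\bot=0\odot B\), so the order claim holds trivially.

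For \(\lambda>0\) I will work with the two scalar maps
\[
f_\lambda(x)=1-(1-x)^\lambda,\qquad g_\lambda(x)=\frac{x}{\lambda-(\lambda-1)x},\qquad x\in[0,1].
\]
In terms of these, \(\lambda\odot A=(f_\lambda(\theta_A),f_\lambda(\sigma_A),g_\lambda(\gamma_A))\). The plan is to show that both maps send \([0,1]\) into \([0,1]\) and are nondecreasing. Monotonicity then transfers exactly as in Proposition~\ref{prop:addition-monotonicity}. If \(A\preceqA B\), then \(\theta_A\le\theta_B\) and \(\sigma_A\le\sigma_B\), which give \(f_\lambda(\theta_A)\le f_\lambda(\theta_B)\) and \(f_\lambda(\sigma_A)\le f_\lambda(\sigma_B)\). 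Also \(\gamma_A\ge\gamma_B\), which gives \(g_\lambda(\gamma_A)\ge g_\lambda(\gamma_B)\). This is the correct direction for the reversed restraint coordinate, so \(\lambda\odot A\preceqA\lambda\odot B\).

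The map \(f_\lambda\) is routine. Since \(1-x\in[0,1]\) and \(\lambda>0\), we have \((1-x)^\lambda\in[0,1]\), using the convention \(0^\lambda=0\). Moreover \(t\mapsto t^\lambda\) is nondecreasing on \([0,1]\), so \(f_\lambda\) is nondecreasing with values in \([0,1]\).

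The main obstacle is \(g_\lambda\), because the denominator is not obviously positive when \(\lambda<1\). Write
\[
D(x)=\lambda-(\lambda-1)x=\lambda(1-x)+x.
\]
This is a convex combination of \(\lambda>0\) and \(1\) for \(x\in[0,1]\), so \(D(x)\ge\min\{\lambda,1\}>0\) and \(g_\lambda\) is well defined.

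For the bounds, \(g_\lambda(x)\ge0\) is immediate. The bound \(g_\lambda(x)\le1\) is equivalent to \(x\le\lambda(1-x)+x\), that is, to \(\lambda(1-x)\ge0\), which holds.

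For monotonicity I differentiate. The quotient rule gives
\[
g_\lambda'(x)=\frac{D(x)+(\lambda-1)x}{D(x)^2}=\frac{\lambda}{D(x)^2}>0.
\]
Alternatively, for \(x>0\) one can use the reciprocal form \(1/g_\lambda(x)=\lambda/x-(\lambda-1)\), mirroring Lemma~\ref{lem:nary-identities}; it is decreasing in \(x\), so \(g_\lambda\) is increasing on \((0,1]\), and \(g_\lambda(0)=0\) covers the boundary. Closure and monotonicity then follow by combining the coordinate facts as above.
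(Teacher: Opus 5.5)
Your proposal is correct and follows the same coordinatewise route as the paper. Both proofs rewrite the restraint denominator as \(\lambda(1-x)+x>0\), check that each coordinate map sends \([0,1]\) into itself and is nondecreasing, and use nondecreasingness in \(\gamma\) to preserve the reversed restraint order; your derivative \(g_\lambda'(x)=\lambda/D(x)^2\) and the equivalence \(g_\lambda\le 1\iff\lambda(1-x)\ge 0\) simply make explicit steps the paper only asserts.
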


\begin{proof}
For \(x\in[0,1]\) and \(\lambda>0\), the expression \(1-(1-x)^\lambda\) belongs to \([0,1]\). Also,
\[
\lambda-(\lambda-1)x=\lambda(1-x)+x>0
\]
for \(x\in[0,1]\) and \(\lambda>0\), and therefore \(x/[\lambda-(\lambda-1)x]\in[0,1]\). The case \(\lambda=0\) is defined as \(\bot\). Monotonicity follows because the first two coordinate functions are nondecreasing in \(\theta\) and \(\sigma\), while the restraint coordinate function is nondecreasing in \(\gamma\); the reversed order in \(\gamma\) is therefore preserved.
\end{proof}

Repeated interpretive multiplication also has a closed form. It yields a power operation that is conservative in the positive coordinates and accumulative in the restraint coordinate.

\begin{definition}[Interpretive power]
\label{def:interpretive-power}
For \(p>0\) and \(A\in\cA\), define
\begin{equation}
A^p
=
\left(
\frac{\theta_A}{p-(p-1)\theta_A},
\;\frac{\sigma_A}{p-(p-1)\sigma_A},
\;1-(1-\gamma_A)^p
\right).
\label{eq:interpretive-power}
\end{equation}
For \(p=0\), set
\[
A^0=\top.
\]
\end{definition}

For an integer \(k\ge1\), \(A^k\) is the repeated interpretive multiplication of \(A\) with itself \(k\) times. The formula extends this behavior to positive real powers.

\begin{proposition}[Closure and monotonicity of powers]
\label{prop:powers-closure}
For every \(A\in\cA\) and \(p\ge0\), \(A^p\in\cA\). If \(A\preceqA B\), then
\[
A^p\preceqA B^p
\]
for every \(p\ge0\).
\end{proposition}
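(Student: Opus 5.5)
The argument mirrors the proof of Proposition~\ref{prop:scalar-multiplication-closure}, with the roles of the two coordinate functions exchanged. Write $h_p(x)=x/(p-(p-1)x)$ and $g_p(x)=1-(1-x)^p$. Then $A^p=(h_p(\theta_A),h_p(\sigma_A),g_p(\gamma_A))$ for $p>0$, and everything reduces to scalar facts about $h_p$ and $g_p$ on $[0,1]$. The case $p=0$ is disposed of first. There $A^0=\top\in\cA$ by definition, and $A^0=\top=B^0$, so the order claim is trivial by reflexivity.

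For closure when $p>0$, I would note, as in the scalar-multiplication proof, that the denominator satisfies $p-(p-1)x=p(1-x)+x$. This is a convex combination of $p>0$ and $1$ with weights $1-x$ and $x$, so it is strictly positive on $[0,1]$. Hence $h_p(x)\ge0$. Moreover $h_p(x)\le1$ is equivalent to $x\le p(1-x)+x$, that is, $0\le p(1-x)$, which holds. For $g_p$, the map $(1-x)\mapsto(1-x)^p$ sends $[0,1]$ into $[0,1]$ for $p>0$, so $g_p(x)\in[0,1]$. Thus $A^p\in\cA$.

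For monotonicity, assume $A\preceqA B$, i.e.\ $\theta_A\le\theta_B$, $\sigma_A\le\sigma_B$ and $\gamma_A\ge\gamma_B$. It suffices to show that $h_p$ and $g_p$ are both nondecreasing on $[0,1]$. Then the first two coordinates of $A^p$ are bounded above by those of $B^p$, and $g_p(\gamma_A)\ge g_p(\gamma_B)$ gives exactly the reversed inequality required in the restraint coordinate, as in Proposition~\ref{prop:multiplication-monotonicity}.

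The function $g_p$ is nondecreasing because $t\mapsto t^p$ is increasing on $[0,1]$ and $1-x$ is decreasing. For $h_p$, the quotient rule gives
\[
h_p'(x)=\frac{(p-(p-1)x)+(p-1)x}{(p-(p-1)x)^2}=\frac{p}{(p(1-x)+x)^2}>0 .
\]
Alternatively, one can avoid derivatives by cross-multiplying $h_p(x)\le h_p(y)$ for $x\le y$, which reduces to $px\le py$.

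The only point needing care is the sign of the denominator when $p<1$, where the coefficient $-(p-1)$ is positive. The convex-combination rewriting settles this uniformly in $p$, so I expect no genuine obstacle. Optionally, one could remark that for integer $p$ these formulas agree with Lemma~\ref{lem:nary-identities}, but the argument above does not need this.
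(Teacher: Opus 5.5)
Your proof is correct and follows the same route as the paper: it reduces everything to closure and monotonicity of the scalar maps $x\mapsto x/(p-(p-1)x)$ and $x\mapsto 1-(1-x)^p$, and it handles $p=0$ through $A^0=\top$, but it actually carries out the scalar verifications that the paper only asserts. One small slip, in your closing remark only: the case where positivity of the denominator needs a moment's thought is $p>1$, where the coefficient $-(p-1)$ is negative, not $p<1$; your convex-combination rewriting $p(1-x)+x$ already covers all $p>0$, so the argument itself is unaffected.
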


\begin{proof}
The proof is analogous to Proposition~\ref{prop:scalar-multiplication-closure}. For \(x\in[0,1]\) and \(p>0\), \(x/[p-(p-1)x]\in[0,1]\), and \(1-(1-x)^p\in[0,1]\). Monotonicity follows from the monotonicity of these scalar functions and the reversed interpretation of the restraint coordinate in \(\preceqA\). The case \(p=0\) is defined as \(\top\).
\end{proof}

\subsection{Algebraic Structure}
\label{subsec:algebraic-properties}

The following theorem summarizes the algebraic structure of the total arithmetic operations.

\begin{theorem}[Commutative monoid structures]
\label{thm:commutative-monoids}
The triples \((\cA,\oplus,\bot)\) and \((\cA,\otimes,\top)\) are commutative monoids. Moreover, \(\top\) is absorbing for \(\oplus\), and \(\bot\) is absorbing for \(\otimes\).
\end{theorem}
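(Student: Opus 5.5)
The plan is to assemble Theorem~\ref{thm:commutative-monoids} from the coordinatewise facts already established, since $\oplus$ and $\otimes$ act role-locally. A commutative monoid needs four things: closure, associativity, commutativity and a two-sided identity. The absorption claims are then extra statements to be checked at the boundary states. All of these are already recorded, coordinate by coordinate, in Proposition~\ref{prop:addition-properties} and Proposition~\ref{prop:multiplication-properties}. So the proof will first reduce each axiom on $\cA$ to the corresponding axiom for $S$ or $H$ on $[0,1]$, and then cite Lemma~\ref{lem:basic-S-H}.

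For $(\cA,\oplus,\bot)$, the steps are as follows.
\begin{itemize}
\item \emph{Closure.} Each of the three output coordinates is produced by $S$ or $H$, and both are closed on $[0,1]$, so $A\oplus B\in\cA$.
\item \emph{Commutativity and associativity.} These hold coordinatewise because $S$ and $H$ are commutative and associative. Explicitly,
\[
(A\oplus B)\oplus C=\bigl(S(S(\theta_A,\theta_B),\theta_C),\,S(S(\sigma_A,\sigma_B),\sigma_C),\,H(H(\gamma_A,\gamma_B),\gamma_C)\bigr),
\]
and this equals $A\oplus(B\oplus C)$ by rebracketing each coordinate.
\item \emph{Identity.} The identities $S(x,0)=x$ and $H(x,1)=x$ give $A\oplus\bot=A$. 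Commutativity then makes $\bot$ a two-sided identity.
\item \emph{Absorption.} The identities $S(x,1)=1$ and $H(x,0)=0$ give $A\oplus\top=\top$.
\end{itemize}
The case $(\cA,\otimes,\top)$ is the mirror image: $H$ acts in the positive coordinates and $S$ in the restraint coordinate. Here $H(x,1)=x$ and $S(x,0)=x$ make $\top=(1,1,0)$ the identity, and $H(x,0)=0$ and $S(x,1)=1$ make $\bot=(0,0,1)$ absorbing.

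The only step that is not purely formal is the associativity of $H$ at the boundary, where the reciprocal identity $1/H(x,y)=1/x+1/y-1$ does not apply. I would handle it directly. If any of $x,y,z$ is zero, then both bracketings equal $0$, because $H(0,\cdot)=0$ and $H$ of two positive arguments is positive. If all three are positive, both bracketings have reciprocal $1/x+1/y+1/z-2$. It also follows that $H(x,y)>0$ whenever $x,y>0$, so the reciprocal identity can be applied again to the inner bracket. Lemma~\ref{lem:basic-S-H} already asserts this, so I can simply cite it; I would only remark that the convention $H(0,0)=0$ is what makes this case analysis go through.

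Finally, I would note that the monoid structure does not use the cognitive order. The polarity reversal enters only through the choice of identity element, which is $\bot$ for the permissive operation and $\top$ for the conservative one. That choice reflects the fact that the neutral element of $S$ is $0$ and the neutral element of $H$ is $1$.
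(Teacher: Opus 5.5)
Your proposal is correct and follows the paper's route: the paper's proof simply cites Propositions~\ref{prop:addition-properties} and~\ref{prop:multiplication-properties}, which are themselves obtained coordinatewise from Lemma~\ref{lem:basic-S-H}, exactly as you do. Your explicit boundary check of the associativity of $H$ spells out in full what the paper's lemma states in one line via $H(0,x)=0$.
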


\begin{proof}
Closure, commutativity, associativity, identity, and absorber properties follow from Propositions~\ref{prop:addition-properties} and \ref{prop:multiplication-properties}.
\end{proof}

\begin{theorem}[Compatibility with the cognitive order]
\label{thm:arithmetic-order-compatibility}
The operations \(\oplus\), \(\otimes\), scalar multiplication, and powers are compatible with the cognitive order. In particular, if \(A_i\preceqA B_i\) for all involved inputs, then applying the same operation to the \(A_i\)'s gives a result cognitively below the result obtained from the \(B_i\)'s.
\end{theorem}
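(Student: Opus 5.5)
This theorem collects results already proved, so the plan is to assemble them rather than redo the estimates. For \(\oplus\) and \(\otimes\), the two-input versions of Propositions~\ref{prop:addition-monotonicity} and~\ref{prop:multiplication-monotonicity} give exactly the claim: if \(A_1\preceqA B_1\) and \(A_2\preceqA B_2\), then \(A_1\oplus A_2\preceqA B_1\oplus B_2\), and likewise for \(\otimes\). For scalar multiplication and powers, Propositions~\ref{prop:scalar-multiplication-closure} and~\ref{prop:powers-closure} give \(\lambda\odot A\preceqA\lambda\odot B\) and \(A^p\preceqA B^p\) for a fixed parameter \(\lambda,p\ge0\). The phrase ``for all involved inputs'' is therefore read as applying only to the state arguments, with the real parameter held fixed.

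To make the argument uniform, I would route everything through the order-alignment map \(\Psi\) of Proposition~\ref{prop:order-alignment}. Each operation is role-preserving in the sense of Principle~\ref{principle:role-preserving-arithmetic}. It acts on \(\theta\) and \(\sigma\) through a coordinate function that is nondecreasing in every argument: \(S\), \(H\), \(x\mapsto1-(1-x)^\lambda\), or \(x\mapsto x/(p-(p-1)x)\). It acts on \(\gamma\) through a function that is also nondecreasing in every argument: \(H\), \(S\), \(x\mapsto x/(\lambda-(\lambda-1)x)\), or \(x\mapsto1-(1-x)^p\).

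A nondecreasing function of the \(\gamma\)'s sends the hypotheses \(\gamma_{A_i}\ge\gamma_{B_i}\) to \(\gamma_{\text{out}}(A)\ge\gamma_{\text{out}}(B)\), which is exactly the reversed third inequality in Eq.~\eqref{eq:cognitive-order}. The three coordinatewise inequalities then assemble into \(\text{out}(A)\preceqA\text{out}(B)\). Monotonicity of \(S\) and \(H\) is supplied by Lemma~\ref{lem:basic-S-H}.

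I would also note one extension: iterated \(n\)-ary applications, such as \(A_1\oplus\cdots\oplus A_n\) or mixed expressions, follow by induction on the number of operations, using associativity from Theorem~\ref{thm:commutative-monoids}.

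The only genuine obstacle is checking monotonicity of the one-variable coordinate functions, since the earlier proofs merely assert it. For \(f(x)=x/(c-(c-1)x)\) with \(c>0\), I would compute
\[
f'(x)=\frac{c}{(c-(c-1)x)^2}>0,
\]
using \(c-(c-1)x=c(1-x)+x>0\) on \([0,1]\). For \(g(x)=1-(1-x)^c\) with \(c>0\), monotonicity is immediate. The boundary parameters \(\lambda=0\) and \(p=0\) give the constant maps \(\bot\) and \(\top\), for which the conclusion holds by reflexivity of \(\preceqA\).
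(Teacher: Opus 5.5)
Your proposal is correct and matches the paper's proof, which likewise just invokes Propositions~\ref{prop:addition-monotonicity}, \ref{prop:multiplication-monotonicity}, \ref{prop:scalar-multiplication-closure} and~\ref{prop:powers-closure} with the real parameter fixed, and handles finitely many inputs by induction. Your derivative computation $f'(x)=c/(c-(c-1)x)^2>0$ and the reflexivity argument for $\lambda=0$, $p=0$ only make explicit the one-variable monotonicity those propositions assert; note also that the induction over composite expressions needs only monotonicity at each step, not associativity.
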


\begin{proof}
The binary cases follow from Propositions~\ref{prop:addition-monotonicity} and \ref{prop:multiplication-monotonicity}. The scalar and power cases follow from Propositions~\ref{prop:scalar-multiplication-closure} and \ref{prop:powers-closure}. The general finite-input case follows by induction.
\end{proof}

\begin{table}[!htbp]
\centering
\caption{Summary of structure-preserving ALA arithmetic operations.}
\label{tab:ala-arithmetic-summary}
\fontsize{10}{12}\selectfont
\setlength{\tabcolsep}{2pt}
\renewcommand{\arraystretch}{1.15}
\begin{tabularx}{\textwidth}{>{\raggedright\arraybackslash}p{0.62in}>{\raggedright\arraybackslash}X>{\raggedright\arraybackslash}X>{\raggedright\arraybackslash}X>{\raggedright\arraybackslash}p{0.72in}}
\toprule
Operation & \(\theta\)-coordinate & \(\sigma\)-coordinate & \(\gamma\)-coordinate & Role \\
\midrule
\(A\oplus B\) & \(S(\theta_A,\theta_B)\) & \(S(\sigma_A,\sigma_B)\) & \(H(\gamma_A,\gamma_B)\) & Permissive \\
\(A\otimes B\) & \(H(\theta_A,\theta_B)\) & \(H(\sigma_A,\sigma_B)\) & \(S(\gamma_A,\gamma_B)\) & Conservative \\
\(\lambda\odot A\) & \(1-(1-\theta_A)^\lambda\) & \(1-(1-\sigma_A)^\lambda\) & \(\dfrac{\gamma_A}{\lambda-(\lambda-1)\gamma_A}\) & Repeated \(\oplus\) \\
\(A^p\) & \(\dfrac{\theta_A}{p-(p-1)\theta_A}\) & \(\dfrac{\sigma_A}{p-(p-1)\sigma_A}\) & \(1-(1-\gamma_A)^p\) & Repeated \(\otimes\) \\

\bottomrule
\end{tabularx}
\end{table}

\section{Role-Preserving Aggregation and the Aggregation Envelope}
\label{sec:aggregation}

The arithmetic core of Section~\ref{sec:arithmetic-core} is evidence-level and therefore non-idempotent. Aggregation has a different purpose. It fuses a finite family of ALA states into a single representative state while preserving the three semantic roles and while respecting normalized weights. Classical ordered weighted averaging and aggregation-function theory provide important precedents for formal aggregation design, but ALA uses aggregation to preserve role-separated ambiguity rather than to collapse it prematurely \cite{Yager1988,BeliakovPraderaCalvo2007}. Hence aggregation must be role-preserving, cognitively monotone, and idempotent. If all inputs are the same ALA state, the aggregate must return that state.

This section introduces the first aggregation layer of the normalized ALA realization. Two Hamacher-type normalized operators are defined. The first is permissive and is obtained from weighted interpretive addition; the second is conservative and is obtained from weighted interpretive multiplication. Their relationship yields an aggregation envelope for the canonical pair \((S,H)\), which is the endpoint of the foundational operational core developed in this paper.

Unlike scalar projection in Section~\ref{sec:scalar-projection}, which maps a single preserved ALA state to a scalar interface value, the aggregation operators in this section fuse multiple ALA states into a new ALA state. Thus projection has type \(\cA\to[0,1]\), whereas aggregation has type \(\cA^n\to\cA\).

\begin{remark}[Heterogeneous aggregation envelopes]
\label{rem:heterogeneous-aggregation-envelope}
Principle~\ref{prin:role-local-calculus} suggests how an aggregation envelope may be extended beyond the unit cube. For inputs \(A_i^{\star}\in\mathcal A^{\star}\), each role carrier would need a declared conservative and permissive aggregator satisfying the native order relation
\[
G_{r,\mathbf w}^{-}
\leq_r
G_{r,\mathbf w}^{+}
\]
for positively oriented roles, with the corresponding reversed direction for restraint. The role-typed endpoints would then satisfy
\[
A_{\mathbf w}^{\star,-}
\preceq_{\mathcal A^{\star}}
A_{\mathbf w}^{\star,+}.
\]
This is an architectural template, not a theorem for every structured carrier. A carrier-specific extension requires native orders, boundary conventions, rolewise aggregation formulas, and sufficient conditions ensuring the endpoint inequality. If a rolewise standardization \(\mathcal R\) is isotone, a valid native envelope is carried to an ordered standard envelope. The operators below establish these properties completely only for the normalized realization \([0,1]^3\) with the canonical pair \((S,H)\).
\end{remark}

\subsection{Weighted Aggregation Operators}
\label{subsec:arithmetic-to-aggregation}

Let
\[
A_i=(\theta_i,\sigma_i,\gamma_i)\in\cA,\qquad i=1,\ldots,n,
\]
be a finite family of ALA states, and let
\begin{equation}
\mathbf{w}=(w_1,\ldots,w_n),
\qquad
w_i>0,
\qquad
\sum_{i=1}^{n}w_i=1,
\label{eq:weight-vector}
\end{equation}
be a normalized positive weight vector. The weights may represent source importance, expert reliability at the source level, criterion importance, or observation relevance. They are not additional ALA coordinates. They specify how much each input state contributes to the fusion process. The formulas below use the canonical zero-floor restraint calculus. When a fixed commitment interface has a declared non-removable floor \(\underline{\gamma}_p>0\), the weighted construction is applied to the normalized residual restraint, or equivalently built from the floor-guarded connectors of Remark~\ref{rem:restraint-floor}.

The distinction between arithmetic and aggregation is important. Arithmetic operations such as \(\oplus\) and \(\otimes\) model accumulation and restriction at the evidence level. Aggregation normalizes these operations by weights. This normalization is what restores idempotency. Repeating the same state as evidence may change the arithmetic result, but aggregating identical weighted inputs must not change the state.

\begin{definition}[Hamacher-type ALA weighted permissive mean]
\label{def:ala-wam}
The Hamacher-type ALA weighted permissive mean, retained under the abbreviation ALA-WAM, of \(A_1,\ldots,A_n\) with respect to \(\mathbf{w}\) is defined by
\[
A_{\mathbf{w}}^{+}
=
\operatorname{WAM}_{\cA,\mathbf{w}}(A_1,\ldots,A_n)
=
\bigoplus_{i=1}^{n}(w_i\odot A_i).
\]
\end{definition}

The superscript \(+\) indicates the permissive side of the aggregation envelope. The operator first scales each input state by its weight through \(w_i\odot A_i\), then fuses the weighted states by interpretive addition. Thus valuation and contextual alignment are accumulated in a permissive manner, while epistemic restraint is combined through the Hamacher component inherited from \(\oplus\).

\begin{definition}[Hamacher-type ALA weighted conservative mean]
\label{def:ala-wgm}
The Hamacher-type ALA weighted conservative mean, retained under the abbreviation ALA-WGM, of \(A_1,\ldots,A_n\) with respect to \(\mathbf{w}\) is defined by
\[
A_{\mathbf{w}}^{-}
=
\operatorname{WGM}_{\cA,\mathbf{w}}(A_1,\ldots,A_n)
=
\bigotimes_{i=1}^{n} A_i^{w_i}.
\]
\end{definition}

The superscript \(-\) indicates the conservative side of the aggregation envelope. Each input is first transformed by its weighted interpretive power, and the resulting states are fused by interpretive multiplication. Thus valuation and contextual alignment are restricted conservatively, while epistemic restraint is accumulated through the algebraic-sum component inherited from \(\otimes\).

\subsection{Closed Forms and Properties}
\label{subsec:aggregation-closed-forms}

The following theorem gives the coordinatewise closed forms. Throughout this section, the convention
\begin{equation}
\left(\sum_{i=1}^{n}\frac{w_i}{x_i}\right)^{-1}=0
\quad\text{whenever at least one }x_i=0
\label{eq:harmonic-convention}
\end{equation}
is used for weighted harmonic-type expressions.

\begin{theorem}[Closed forms of ALA-WAM and ALA-WGM]
\label{thm:closed-forms-wam-wgm}
Let \(A_i=(\theta_i,\sigma_i,\gamma_i)\in\cA\) and let \(\mathbf{w}\) satisfy Eq.~\eqref{eq:weight-vector}. Then
\begin{equation}
A_{\mathbf{w}}^{+}
=
\left(
1-\prod_{i=1}^{n}(1-\theta_i)^{w_i},
\;1-\prod_{i=1}^{n}(1-\sigma_i)^{w_i},
\;\left(\sum_{i=1}^{n}\frac{w_i}{\gamma_i}\right)^{-1}
\right),
\label{eq:closed-wam}
\end{equation}
and
\begin{equation}
A_{\mathbf{w}}^{-}
=
\left(
\left(\sum_{i=1}^{n}\frac{w_i}{\theta_i}\right)^{-1},
\;\left(\sum_{i=1}^{n}\frac{w_i}{\sigma_i}\right)^{-1},
\;1-\prod_{i=1}^{n}(1-\gamma_i)^{w_i}
\right).
\label{eq:closed-wgm}
\end{equation}
\end{theorem}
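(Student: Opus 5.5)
The proof is coordinatewise, since $\odot$, $\oplus$, powers and $\otimes$ are all role-preserving: each output coordinate depends only on the same coordinate of the inputs. I would first write down the coordinates of $w_i\odot A_i$ and $A_i^{w_i}$ from Eqs.~\eqref{eq:scalar-multiplication} and \eqref{eq:interpretive-power}. I would then fold them with the $n$-ary identities of Lemma~\ref{lem:nary-identities}. Because $\oplus$ and $\otimes$ are associative and commutative (Propositions~\ref{prop:addition-properties} and \ref{prop:multiplication-properties}), the iterated $\bigoplus$ and $\bigotimes$ are well defined. Their coordinates are exactly $S_n$ and $H_n$ applied to the corresponding input coordinates.

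For the $S$-coordinates the computation is immediate. The first coordinate of $w_i\odot A_i$ is $1-(1-\theta_i)^{w_i}$, so $1-x_i=(1-\theta_i)^{w_i}$. Eq.~\eqref{eq:nary-S} then gives $1-\prod_i(1-\theta_i)^{w_i}$, and likewise for $\sigma$. For the restraint coordinate of $A^{-}_{\mathbf w}$, the power contributes $1-(1-\gamma_i)^{w_i}$, and $S_n$ yields $1-\prod_i(1-\gamma_i)^{w_i}$.

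For the $H$-coordinates I would use the reciprocal identity. If $x>0$ and $\lambda>0$, the map $g_\lambda(x)=x/(\lambda-(\lambda-1)x)$ satisfies
\[
\frac{1}{g_\lambda(x)}=\frac{\lambda-(\lambda-1)x}{x}=\frac{\lambda}{x}-\lambda+1 .
\]
Taking $\lambda=w_i$ and $x=\gamma_i$ in the third coordinate of $w_i\odot A_i$, Eq.~\eqref{eq:nary-H} gives
\[
H_n^{-1}=\sum_i\Bigl(\frac{w_i}{\gamma_i}-w_i+1\Bigr)-(n-1)=\sum_i\frac{w_i}{\gamma_i}-1+n-(n-1)=\sum_i\frac{w_i}{\gamma_i},
\]
using $\sum_i w_i=1$. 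This is precisely where the normalization of the weights enters. The same computation with $x=\theta_i$ or $x=\sigma_i$ handles the first two coordinates of $A^{-}_{\mathbf w}$, using the $p=w_i$ power formula.

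The only delicate step is the boundary case where some coordinate fed into $H$ equals $0$. If $\gamma_i=0$, then $g_{w_i}(0)=0$, because $w_i>0$ keeps the denominator equal to $w_i\neq0$. Lemma~\ref{lem:nary-identities} then gives $H_n=0$, which agrees with the convention \eqref{eq:harmonic-convention}. If all such inputs are positive, every $g_{w_i}(x_i)$ is positive and the reciprocal computation above applies. With this case split, both \eqref{eq:closed-wam} and \eqref{eq:closed-wgm} follow.
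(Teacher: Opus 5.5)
Your proposal is correct and follows the paper's proof essentially step for step. Both arguments expand $w_i\odot A_i$ and $A_i^{w_i}$ coordinatewise and fold them with the $n$-ary identities for $S_n$ and $H_n$. Both use the reciprocal form $1/g_\lambda(x)=1+\lambda(1/x-1)$ together with $\sum_i w_i=1$, and both settle the zero case through the absorbing zero of $H$ and the harmonic convention; your explicit treatment of the $\theta$- and $\sigma$-coordinates of $A^{-}_{\mathbf w}$ simply writes out what the paper calls the dual argument.
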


\begin{proof}
For ALA-WAM, Definition~\ref{def:ala-wam} gives
\[
A_{\mathbf{w}}^{+}=\bigoplus_{i=1}^{n}(w_i\odot A_i).
\]
Using Eq.~\eqref{eq:scalar-multiplication}, the valuation coordinate of \(w_i\odot A_i\) is \(1-(1-\theta_i)^{w_i}\). Applying the n-ary algebraic sum identity in Eq.~\eqref{eq:nary-S} gives
\[
1-\prod_{i=1}^{n}\left(1-[1-(1-\theta_i)^{w_i}]\right)
=
1-\prod_{i=1}^{n}(1-\theta_i)^{w_i}.
\]
The same argument gives the contextual coordinate in Eq.~\eqref{eq:closed-wam}.

For the restraint coordinate, the restraint component of \(w_i\odot A_i\) is
\[
y_i=\frac{\gamma_i}{w_i-(w_i-1)\gamma_i}.
\]
If every \(\gamma_i>0\), then
\[
\frac{1}{y_i}=1+w_i\left(\frac{1}{\gamma_i}-1\right).
\]
Using the n-ary Hamacher identity in Eq.~\eqref{eq:nary-H},
\[
\frac{1}{H_n(y_1,\ldots,y_n)}
=
\sum_{i=1}^{n}\frac{1}{y_i}-(n-1)
=
\sum_{i=1}^{n}\frac{w_i}{\gamma_i},
\]
where \(\sum_i w_i=1\) has been used. This proves the restraint coordinate of Eq.~\eqref{eq:closed-wam}; the zero case follows from the convention in Eq.~\eqref{eq:harmonic-convention} and the fact that Hamacher product has zero as absorbing element.

The proof of Eq.~\eqref{eq:closed-wgm} is dual. Apply Definition~\ref{def:ala-wgm}, use the power formula in Eq.~\eqref{eq:interpretive-power}, use Eq.~\eqref{eq:nary-H} for the valuation and contextual coordinates, and use Eq.~\eqref{eq:nary-S} for the restraint coordinate.
\end{proof}

Eqs.~\eqref{eq:closed-wam} and \eqref{eq:closed-wgm} show why the aggregation operators are not coordinate-mixing. The \(\theta\)-output is generated only from \(\theta\)-inputs, the \(\sigma\)-output only from \(\sigma\)-inputs, and the \(\gamma\)-output only from \(\gamma\)-inputs. This is the aggregation-level form of role preservation.

\begin{remark}[Naming and the Hamacher family]
\label{rem:hamacher-naming}
The abbreviations ALA-WAM and ALA-WGM are retained for continuity with weighted aggregation terminology, but the operators are not the coordinatewise arithmetic and geometric means. Their native construction is Hamacher based. In particular, the \(\theta\)- and \(\sigma\)-coordinates of ALA-WGM in Eq.~\eqref{eq:closed-wgm} are weighted harmonic means rather than products \(\prod_i\theta_i^{w_i}\). The full names ``Hamacher-type permissive mean'' and ``Hamacher-type conservative mean'' are therefore the mathematically more informative descriptions; the shorter acronyms serve only as compact notation \cite{LiuHamacher2014}.
\end{remark}

\begin{theorem}[Idempotency and closure]
\label{thm:aggregation-idempotency}
For every \(A\in\cA\),
\[
\operatorname{WAM}_{\cA,\mathbf{w}}(A,\ldots,A)=A,
\qquad
\operatorname{WGM}_{\cA,\mathbf{w}}(A,\ldots,A)=A.
\]
Moreover, \(A_{\mathbf{w}}^{+}\in\cA\) and \(A_{\mathbf{w}}^{-}\in\cA\).
\end{theorem}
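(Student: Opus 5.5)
The plan is to work entirely from the closed forms of Theorem~\ref{thm:closed-forms-wam-wgm}. They reduce both claims to scalar statements about three coordinate functions, using only the normalization \(\sum_i w_i=1\) and \(w_i>0\) from Eq.~\eqref{eq:weight-vector}.

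For idempotency, substitute \(A_i=A=(\theta,\sigma,\gamma)\) for every \(i\) in Eq.~\eqref{eq:closed-wam}. The positive coordinates become
\[
1-\prod_i(1-\theta)^{w_i}=1-(1-\theta)^{\sum_i w_i}=\theta,
\]
and similarly for \(\sigma\). The restraint coordinate becomes \(\bigl(\sum_i w_i/\gamma\bigr)^{-1}=\gamma\) when \(\gamma>0\). When \(\gamma=0\), the convention in Eq.~\eqref{eq:harmonic-convention} gives \(0=\gamma\). The same two identities, with roles exchanged, give idempotency of \(\operatorname{WGM}\) from Eq.~\eqref{eq:closed-wgm}. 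The harmonic expression gives back \(\theta\) and \(\sigma\), including the zero case by convention, and \(1-\prod_i(1-\gamma)^{w_i}=\gamma\). Note that \(0^{w_i}=0\) for \(w_i>0\), so the boundary value \(\theta=1\) causes no trouble.

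For closure, one can argue abstractly that each aggregate is built from \(w_i\odot A_i\) or \(A_i^{w_i}\) combined by \(\oplus\) or \(\otimes\). Propositions~\ref{prop:scalar-multiplication-closure}, \ref{prop:powers-closure}, \ref{prop:addition-properties} and \ref{prop:multiplication-properties} give membership in \(\cA\) at each step, by induction on \(n\). As a direct check, I would also note the following. Each \((1-x_i)^{w_i}\in[0,1]\), so the product lies in \([0,1]\) and so does \(1-\)product. Each weighted harmonic mean with all \(x_i>0\) lies between \(\min_i x_i\) and \(\max_i x_i\), hence in \((0,1]\), because it is a weighted power mean of order \(-1\). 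In the zero case it equals \(0\).

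The only delicate step is the boundary bookkeeping. The zero convention must be applied consistently, and it must be compatible with the Hamacher absorbing property already used in proving Theorem~\ref{thm:closed-forms-wam-wgm}. Everything else is routine exponent arithmetic.
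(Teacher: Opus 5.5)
Your proof is correct and follows essentially the same route as the paper. You substitute identical inputs into the closed forms of Theorem~\ref{thm:closed-forms-wam-wgm}, collapse \(\prod_i(1-x)^{w_i}\) and \(\sum_i w_i/x\) using \(\sum_i w_i=1\), handle the zero case by the convention in Eq.~\eqref{eq:harmonic-convention}, and read closure off the closed forms. Your extra closure argument through \(\odot\), powers, \(\oplus\), and \(\otimes\) is valid but redundant, and your remark that \(0^{w_i}=0\) for \(w_i>0\) correctly covers the boundary \(\theta=1\), which the paper passes over silently.
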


\begin{proof}
Closure follows from Theorem~\ref{thm:closed-forms-wam-wgm}, since all coordinates remain in \([0,1]\). If \(A_i=A=(\theta,\sigma,\gamma)\) for all \(i\), then
\[
1-\prod_{i=1}^{n}(1-\theta)^{w_i}=1-(1-\theta)^{\sum_iw_i}=\theta,
\]
and similarly for \(\sigma\). Also,
\[
\left(\sum_{i=1}^{n}\frac{w_i}{\gamma}\right)^{-1}=\gamma
\]
whenever \(\gamma>0\), with the boundary case \(\gamma=0\) following from Eq.~\eqref{eq:harmonic-convention}. Thus ALA-WAM is idempotent. The proof for ALA-WGM is analogous.
\end{proof}

\begin{remark}[Boundary convention and \(\varepsilon\)-regularization]
\label{rem:boundary-aggregation}
The convention in Eq.~\eqref{eq:harmonic-convention} gives exact zeros veto-like or absorbing semantics: one input with \(\gamma_i=0\) forces the restraint coordinate of \(A^{+}_{\mathbf w}\) to zero, while one input with \(\theta_i=0\) or \(\sigma_i=0\) annihilates the corresponding coordinate of \(A^{-}_{\mathbf w}\), regardless of a small positive weight. This behavior is mathematically coherent when an exact boundary value is intended as a categorical release or veto.

When exact boundary values are measurement artifacts rather than categorical statements, applications should adopt and report an \(\varepsilon\)-regularization convention before aggregation:
\[
x_i^{(\varepsilon)}
=
\min\{1-\varepsilon,\max\{\varepsilon,x_i\}\},
\qquad
0<\varepsilon<\tfrac12.
\]
The closed forms and envelope theorem then apply to the regularized inputs. Thus the framework does not silently choose between absorbing boundary semantics and weight-sensitive interior behavior; the application must declare which interpretation is intended.
\end{remark}

\begin{theorem}[Cognitive monotonicity of aggregation]
\label{thm:aggregation-monotonicity}
Let \(A_i,B_i\in\cA\) for \(i=1,
\ldots,n\). If \(A_i\preceqA B_i\) for every \(i\), then
\[
\operatorname{WAM}_{\cA,\mathbf{w}}(A_1,\ldots,A_n)
\preceqA
\operatorname{WAM}_{\cA,\mathbf{w}}(B_1,\ldots,B_n),
\]
and
\[
\operatorname{WGM}_{\cA,\mathbf{w}}(A_1,\ldots,A_n)
\preceqA
\operatorname{WGM}_{\cA,\mathbf{w}}(B_1,\ldots,B_n).
\]
\end{theorem}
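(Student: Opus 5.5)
The plan is to derive monotonicity structurally from the arithmetic layer rather than from the closed forms, since the needed ingredients are already in place. By Definition~\ref{def:ala-wam}, \(\operatorname{WAM}_{\cA,\mathbf{w}}(A_1,\ldots,A_n)=\bigoplus_{i}(w_i\odot A_i)\). Proposition~\ref{prop:scalar-multiplication-closure} with \(\lambda=w_i>0\) turns each hypothesis \(A_i\preceqA B_i\) into \(w_i\odot A_i\preceqA w_i\odot B_i\). The two-input form of Proposition~\ref{prop:addition-monotonicity}, applied inductively over the \(n\) summands, then gives
\[
\bigoplus_{i=1}^{n}(w_i\odot A_i)\preceqA\bigoplus_{i=1}^{n}(w_i\odot B_i).
\]
The induction is well defined because \(\oplus\) is associative (Theorem~\ref{thm:commutative-monoids}), so the \(n\)-fold sum does not depend on bracketing. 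At step \(k\) we combine \(P_k\preceqA Q_k\) with \(w_{k+1}\odot A_{k+1}\preceqA w_{k+1}\odot B_{k+1}\). This is exactly the finite-input clause already recorded in Theorem~\ref{thm:arithmetic-order-compatibility}.

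The conservative operator is handled the same way. Write \(\operatorname{WGM}_{\cA,\mathbf{w}}=\bigotimes_i A_i^{w_i}\). Proposition~\ref{prop:powers-closure} with \(p=w_i\) gives \(A_i^{w_i}\preceqA B_i^{w_i}\), and Proposition~\ref{prop:multiplication-monotonicity}, iterated over the product, gives the second inequality.

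As an independent check, and to handle the boundary convention cleanly, I would also read the result off the closed forms of Theorem~\ref{thm:closed-forms-wam-wgm}. The maps \(\theta_i\mapsto 1-\prod_i(1-\theta_i)^{w_i}\) are coordinatewise nondecreasing. The weighted harmonic expression \(\bigl(\sum_i w_i/x_i\bigr)^{-1}\) is also nondecreasing in each \(x_i\) on \((0,1]\), and it remains so under convention~\eqref{eq:harmonic-convention}: lowering an input to \(0\) drives the value to \(0\), which is the minimum. Since \(A_i\preceqA B_i\) means \(\theta\) and \(\sigma\) increase while \(\gamma\) decreases, each output coordinate moves in the direction \(\preceqA\) requires. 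For WAM, \(\gamma\) enters through a nondecreasing harmonic map, so smaller input restraint yields smaller output restraint. For WGM, \(\gamma\) enters through a nondecreasing algebraic-sum map.

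The main obstacle I expect is the boundary behaviour of the scalar and power maps at zero weight and zero coordinates. The structural route needs \(w_i>0\) (guaranteed by Eq.~\eqref{eq:weight-vector}), so the degenerate definitions \(0\odot A=\bot\) and \(A^0=\top\) are never invoked. I would still verify that the restraint map \(x\mapsto x/(\lambda-(\lambda-1)x)\) is nondecreasing on all of \([0,1]\), including at \(x=0\). Its derivative \(\lambda/(\lambda-(\lambda-1)x)^2\) is positive because the denominator \(\lambda(1-x)+x\) stays positive. With that checked, Proposition~\ref{prop:scalar-multiplication-closure} applies without exception and the argument closes.
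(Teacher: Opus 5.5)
Your proposal is correct and follows essentially the same route as the paper: for each $i$, monotonicity of $w_i\odot(\cdot)$ and of the power $(\cdot)^{w_i}$ gives the pointwise inequalities, and the binary monotonicity of $\oplus$ and $\otimes$, iterated over the $n$ inputs (the finite-input clause of Theorem~\ref{thm:arithmetic-order-compatibility}), finishes the proof. Your additional check via the closed forms of Theorem~\ref{thm:closed-forms-wam-wgm} is also valid, including your treatment of the zero convention for the harmonic expression and your observation that $w_i>0$ keeps the degenerate cases $0\odot A=\bot$ and $A^0=\top$ out of play.
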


\begin{proof}
For ALA-WAM, the result follows from the cognitive monotonicity of scalar multiplication and interpretive addition, established in Theorem~\ref{thm:arithmetic-order-compatibility}. For ALA-WGM, it follows from the cognitive monotonicity of interpretive powers and interpretive multiplication. The finite-input case is obtained by repeated application of the binary monotonicity results.
\end{proof}

\subsection{The Aggregation Envelope}
\label{subsec:aggregation-envelope-theorem}

The permissive and conservative aggregates form an order interval in the cognitive order. This interval is the first formal expression of aggregation ambiguity in ALA.

\begin{theorem}[Aggregation envelope]
\label{thm:aggregation-envelope}
For every finite family \(A_1,\ldots,A_n\in\cA\) and every positive normalized weight vector \(\mathbf{w}\),
\[
A_{\mathbf{w}}^{-}\preceqA A_{\mathbf{w}}^{+}.
\]
Equivalently,
\begin{equation}
\theta_{\mathbf{w}}^{-}\le \theta_{\mathbf{w}}^{+},
\qquad
\sigma_{\mathbf{w}}^{-}\le \sigma_{\mathbf{w}}^{+},
\qquad
\gamma_{\mathbf{w}}^{-}\ge \gamma_{\mathbf{w}}^{+}.
\label{eq:aggregation-envelope-coordinates}
\end{equation}
\end{theorem}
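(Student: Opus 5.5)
The plan is to use the closed forms of Theorem~\ref{thm:closed-forms-wam-wgm} and reduce the envelope to two scalar inequalities. Because both aggregates are role-preserving, the three coordinate inequalities in Eq.~\eqref{eq:aggregation-envelope-coordinates} decouple. The \(\theta\)- and \(\sigma\)-inequalities are the same statement: for \(x_i\in[0,1]\) and positive normalized weights,
\[
\left(\sum_{i=1}^{n}\frac{w_i}{x_i}\right)^{-1}\le 1-\prod_{i=1}^{n}(1-x_i)^{w_i}.
\]
The \(\gamma\)-inequality is this same inequality with the two sides exchanged and \(x_i=\gamma_i\): the WGM restraint coordinate \(1-\prod_i(1-\gamma_i)^{w_i}\) must dominate the WAM restraint coordinate \((\sum_i w_i/\gamma_i)^{-1}\). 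So one scalar lemma, applied three times, gives all of Eq.~\eqref{eq:aggregation-envelope-coordinates}. Proposition~\ref{prop:order-alignment} then turns these into \(A_{\mathbf w}^{-}\preceqA A_{\mathbf w}^{+}\).

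To prove the scalar lemma, I would insert the weighted arithmetic mean \(\bar x=\sum_i w_i x_i\) between the two sides.

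For the left side, the weighted harmonic–arithmetic mean inequality gives
\[
\left(\sum_i w_i/x_i\right)^{-1}\le \sum_i w_i x_i
\]
when all \(x_i>0\). If some \(x_i=0\), the convention in Eq.~\eqref{eq:harmonic-convention} makes the left side \(0\), and the inequality is trivial.

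For the right side, the weighted AM–GM inequality applied to the numbers \(1-x_i\) gives
\[
\prod_i(1-x_i)^{w_i}\le \sum_i w_i(1-x_i)=1-\bar x,
\]
so \(1-\prod_i(1-x_i)^{w_i}\ge \bar x\). Factors with \(1-x_i=0\) make the product \(0\), and the bound still holds.

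Chaining the two bounds gives the lemma. Both mean inequalities are standard, so no new analysis is needed.

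The main obstacle, modest as it is, is the boundary bookkeeping. I need to check that the convention in Eq.~\eqref{eq:harmonic-convention} and the cases where some \(x_i\in\{0,1\}\) are consistent with the closed forms as actually derived from \(\odot\) and the power operation at boundary values. For example, when \(\gamma_i=0\), the scalar-multiplication formula gives \(w_i\odot A_i\) restraint \(0\), so \(H\) absorbs it, which matches the convention. I also need to confirm that the direction of the \(\gamma\)-inequality is indeed reversed relative to the positive coordinates. Once those cases are checked, the theorem follows directly.
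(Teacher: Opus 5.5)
Your proposal is correct and follows essentially the same route as the paper's proof. Both reduce the envelope to the single scalar inequality \(\bigl(\sum_i w_i/x_i\bigr)^{-1}\le 1-\prod_i(1-x_i)^{w_i}\), prove it by inserting the weighted arithmetic mean via the harmonic--arithmetic and AM--GM inequalities, and apply it to \(\theta\), \(\sigma\), and (with the sides exchanged) \(\gamma\). Your extra boundary bookkeeping is consistent with the paper's convention in Eq.~\eqref{eq:harmonic-convention}.
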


\begin{proof}
It is enough to prove the scalar inequality
\begin{equation}
\left(\sum_{i=1}^{n}\frac{w_i}{x_i}\right)^{-1}
\le
1-\prod_{i=1}^{n}(1-x_i)^{w_i}
\label{eq:scalar-envelope-inequality}
\end{equation}
for \(x_i\in[0,1]\), with the convention in Eq.~\eqref{eq:harmonic-convention}. If some \(x_i=0\), the left-hand side is zero and the inequality is immediate. Assume therefore that all \(x_i>0\). The weighted harmonic mean is not larger than the weighted arithmetic mean:
\[
\left(\sum_{i=1}^{n}\frac{w_i}{x_i}\right)^{-1}
\le
\sum_{i=1}^{n}w_i x_i.
\]
By the weighted arithmetic-geometric mean inequality applied to \(1-x_i\),
\[
\prod_{i=1}^{n}(1-x_i)^{w_i}
\le
\sum_{i=1}^{n}w_i(1-x_i)
=
1-\sum_{i=1}^{n}w_i x_i.
\]
Hence
\[
\sum_{i=1}^{n}w_i x_i
\le
1-\prod_{i=1}^{n}(1-x_i)^{w_i}.
\]
Combining the two inequalities gives Eq.~\eqref{eq:scalar-envelope-inequality}.

Applying Eq.~\eqref{eq:scalar-envelope-inequality} to \(x_i=\theta_i\) and \(x_i=\sigma_i\) gives \(\theta_{\mathbf{w}}^{-}\le\theta_{\mathbf{w}}^{+}\) and \(\sigma_{\mathbf{w}}^{-}\le\sigma_{\mathbf{w}}^{+}\). Applying it to \(x_i=\gamma_i\) gives \(\gamma_{\mathbf{w}}^{+}\le\gamma_{\mathbf{w}}^{-}\). This is precisely Eq.~\eqref{eq:aggregation-envelope-coordinates}, which is equivalent to \(A_{\mathbf{w}}^{-}\preceqA A_{\mathbf{w}}^{+}\) because \(\gamma\) has reversed polarity in the cognitive order.
\end{proof}

The theorem does not rank the two aggregators. It locates the conservative aggregate cognitively below the permissive aggregate in the ALA order. The coordinatewise aggregation latitude is the nonnegative vector
\[
\Delta_{\mathbf w}
=
\bigl(
\theta_{\mathbf w}^{+}-\theta_{\mathbf w}^{-},
\;
\sigma_{\mathbf w}^{+}-\sigma_{\mathbf w}^{-},
\;
\gamma_{\mathbf w}^{-}-\gamma_{\mathbf w}^{+}
\bigr).
\]
It records, role by role, the spread between the conservative and permissive endpoints. A scalar norm of \(\Delta_{\mathbf w}\) may be introduced by an application, but no such norm is primitive in the present paper.

\begin{example}[A numerical aggregation envelope]
\label{ex:aggregation-envelope}
Let \(A_1=(0.80,0.60,0.20)\) and \(A_2=(0.50,0.90,0.60)\), with \(\mathbf{w}=(0.6,0.4)\). Theorem~\ref{thm:closed-forms-wam-wgm} gives, to four decimals,
\[
A^{+}_{\mathbf{w}}=(0.7115,\;0.7703,\;0.2727),
\qquad
A^{-}_{\mathbf{w}}=(0.6452,\;0.6923,\;0.3937).
\]
The envelope inequality is visible coordinatewise: the conservative aggregate carries lower valuation, lower contextual alignment, and stronger restraint. Here
\[
\Delta_{\mathbf w}
=
(0.0663,\;0.0780,\;0.1210),
\]
so the largest rolewise latitude occurs in restraint. A downstream interface may operate at a declared point or by a declared rule inside the order interval; that selection is a policy rather than a hidden consequence of the aggregation formula.
\end{example}

\begin{remark}[Rolewise interpolation within the envelope]
\label{rem:power-mean}
For a positively oriented scalar coordinate, the conservative endpoint is the weighted harmonic mean \(M_{-1}\), while the permissive endpoint dominates the weighted arithmetic mean \(M_1\). Since the weighted power mean
\[
M_p(x_1,\ldots,x_n;\mathbf{w})
=
\Bigl(\sum_{i=1}^{n}w_i x_i^{p}\Bigr)^{1/p},
\qquad
p\in[-1,1]\setminus\{0\},
\]
is nondecreasing in \(p\), with the geometric mean recovered as \(p\to0\), every \(M_p\) on this range lies between the scalar endpoints for \(\theta\) and \(\sigma\). For restraint, the same numerical family must be interpreted through the reversed cognitive polarity. Consequently, \(p\) is a rolewise interpolation parameter within the aggregation envelope, not automatically a single global ``conservatism dial'' for the entire ALA state. A globally ordered interpolation would require coordinate-specific parameterization consistent with the reversed restraint direction.
\end{remark}

\section{Operational Use and Clinical Stress Testing}

This section moves from the structural development of ALA to its operational interpretation. Its purpose is not to introduce a domain-specific decision rule, but to show how a triadic ALA state can be constructed, processed, and finally projected when an external action or report is required. The clinical stress test then illustrates why this operational separation matters in a high-stakes setting, where evidential support alone does not automatically authorize intervention.

\subsection{Operational Protocol}
\label{subsec:compact-operational-protocol}

Before turning to the clinical stress test, it is useful to state how ALA may be used operationally in an applied setting. A compact deployment proceeds in four stages. First, \emph{role elicitation} gathers domain inputs for valuation, contextual alignment, and epistemic restraint without premature aggregation. Second, \emph{state construction} maps these inputs to a triadic ALA state \(A\in\cA\), so that no role is already absorbed into another. Third, \emph{triadic processing} applies the cognitive order, lattice operations, arithmetic operations, aggregation operators, or other role-preserving updates at the level of the full state. Fourth, \emph{interface projection} may be applied when a scalar, linguistic, ranking, thresholding, or decision-ready readout is required downstream. The fourth stage is terminal and optional: it is a declared readout for a specific external purpose, not a replacement for the underlying triadic state.

The computational core of this protocol is lightweight once the ALA states and the selected state-level operators have been fixed. The basic operations developed in this paper are closed on \([0,1]^3\), coordinate-preserving, and require only elementary arithmetic and exponentiation. This tractability claim concerns the state-level operational kernel, not ALA as a whole; elicitation, interpretation, and policy selection remain domain-bound tasks whose complexity depends on the application.

\subsection{Clinical Stress Test}
\label{subsec:clinical-stress-test}

This case study illustrates the central operational distinction preserved by ALA: strong evidential support does not by itself imply permission for immediate operational commitment. The example is not intended as clinical advice. It is a methodological scenario designed to show why a membership-like judgment may need to preserve valuation, contextual alignment, and epistemic restraint before scalar projection.

This case gives operational content to the introductory principle that knowing is not permission to intervene, and uncertainty is not permission to delay. A judgment may have strong evidential support, strong contextual alignment, and high auxiliary confidence, while still being restrained from direct operational use. Conversely, the absence of permission for immediate invasive intervention does not by itself license routine delay. The issue is not whether the judgment is supported. The issue is whether that support is permitted to drive a particular form of operational commitment.

The case is built on a single contrast: the same support, carried through different commitment interfaces.

For clarity, the assessments in this section are stated as role-factorized ALA specifications in the sense of Definition~\ref{def:role-factorized}, so that each numerical coordinate can be read directly from its most direct semantic arguments; nothing in the analysis depends on this choice rather than on a general assessment producing the same states.

Consider an AI-assisted clinical assessment of a suspicious lesion \cite{Amann2020}, where the preserved judgment concerns diagnostic suspiciousness alone. The evidential basis is strong: the lesion is highly compatible with the target predicate in terms of shape, boundary, texture, and growth pattern. The interpretive conditions are also strongly aligned: the imaging quality is high, the device is calibrated, the clinical information is adequate, and the case lies within the intended domain of the diagnostic model. Suppose also that an auxiliary confidence score associated with the diagnostic evidence is high:
\[
\kappa=0.96.
\]
The score \(\kappa\) is not an ALA coordinate. It is an auxiliary elicitation input with boundary scope in the sense of Section~3.4, recorded here only to make explicit that the contrast below is not driven by any loss of confidence.
In ALA, the support-side coordinates are therefore assigned as
\[
\theta=0.94,
\qquad
\sigma=0.90.
\]

The case is constructed so that the evidential basis, contextual alignment, and confidence level remain fixed, while the operational commitment interface changes. The same preserved clinical judgment is first used in a lower-commitment, comparatively reversible follow-up interface and then in a high-stakes, invasive intervention interface. The point is not that the evidence changes, nor that its contextual adequacy changes. The point is that the permission boundary for operationalizing the same preserved judgment changes.

The two ALA states are accordingly assigned the same valuation and contextual-alignment coordinates:
\[
\theta_1=\theta_2=0.94,
\qquad
\sigma_1=\sigma_2=0.90.
\]
They differ only in epistemic restraint:
\[
A_F=(0.94,0.90,0.15),
\qquad
A_I=(0.94,0.90,0.95).
\]
The first state represents the use of the judgment in a lower-commitment follow-up interface. The second represents the use of the same preserved judgment in a high-stakes intervention interface. The difference is therefore not evidential and not contextual in the sense of contextual alignment. It is a difference in the boundary between preserved judgment and operational commitment. Formally, the two commitment interfaces correspond to two perspectives \(p_{F},p_{I}\in\mathcal{P}\) in the sense of Definition~\ref{def:ala-valued}: a perspective includes the operational policy under which the judgment is to be used, so the change of interface is a change of \(p\) with the object \(x\) and the context \(c\) held fixed.


\begin{figure}[!htbp]
\centering
\resizebox{0.86\textwidth}{!}{%
\begin{tikzpicture}[
    font=\small,
    >=Latex,
    shared/.style={
        draw=Ccube,
        thick,
        rounded corners,
        align=center,
        inner sep=7pt,
        text width=6.4cm
    },
    card/.style={
        draw,
        rounded corners,
        align=center,
        inner sep=8pt,
        minimum height=1.25cm,
        text width=4.8cm
    },
    status/.style={
        rounded corners,
        align=center,
        inner sep=6pt,
        text width=4.5cm,
        font=\footnotesize
    },
    arr/.style={-{Latex[length=2.4mm]}, thick}
]

\node[shared] (shared) at (0,2.25)
{\textbf{same evidential basis}\\[-1pt]
same contextual alignment\\[-1pt]
same auxiliary confidence \(\kappa=0.96\)};

\node[card, draw=Cok, very thick] (follow) at (-4.2,0.55)
{\textbf{follow-up interface}\\[2pt]
\(A_{\mathrm F}=(0.94,0.90,0.15)\)};

\node[card, draw=Cbad, very thick] (intervention) at (4.2,0.55)
{\textbf{invasive-intervention interface}\\[2pt]
\(A_{\mathrm I}=(0.94,0.90,0.95)\)};

\node[status, draw=Cok] (followstatus) at (-4.2,-1.15)
{\textbf{lower restraint toward follow-up}\\[-1pt]
comparatively reversible interface};

\node[status, draw=Cbad] (intervstatus) at (4.2,-1.15)
{\textbf{higher restraint toward intervention}\\[-1pt]
high-stakes, invasive interface};

\draw[arr, Cok] (shared.south west) -- (follow.north);
\draw[arr, Cbad] (shared.south east) -- (intervention.north);

\draw[arr, Cok] (follow.south) -- (followstatus.north);
\draw[arr, Cbad] (intervention.south) -- (intervstatus.north);

\end{tikzpicture}%
}

\caption{Support and permission under different commitment interfaces.}
\label{fig:case-study-permission}
\end{figure}
Fig.~\ref{fig:case-study-permission} gives operational content to the case-study principle: same support, different commitment interface. The evidential basis, contextual alignment, and auxiliary confidence are held fixed, while the operational interface changes from lower-commitment follow-up to high-stakes invasive intervention. The distinction is represented through \(\gamma\), not through a change in \(\theta\) or \(\sigma\).

In this case, the evidential and interpretive context relevant to valuation and contextual alignment remains fixed, while the commitment interface changes; the change is carried entirely by epistemic restraint.

\begin{remark}[Intervention and delay]
The preceding distinction should not be read as a preference for delay. 
Withholding immediate invasive intervention does not automatically 
license routine delay, because non-intervention is also an operational 
commitment and must itself be justified. Thus, ALA does not prescribe 
inaction; it requires both intervention and non-intervention to pass 
through an appropriate commitment interface. In this sense, the case 
operationalizes the principle stated above: supported knowing is not 
permission to intervene, and unresolved uncertainty is not permission 
to delay.
\end{remark}

\subsection{Representation and Projection Lessons}
\label{subsec:case-semantic-reassignment}

A standard fuzzy representation would assign the same membership value, for example
\[
\mu=0.94,
\]
to both uses of the judgment. This scalar value preserves the strength of the lesion-related assessment, but it does not preserve the difference between a lower-commitment follow-up recommendation and a high-stakes intervention commitment.

An intuitionistic or neutrosophic representation could introduce non-membership, hesitation, or indeterminacy. However, the present example is not primarily a case of high non-membership, weak evidence, or unresolved truth-status. The lesion remains strongly supported by the evidence. If hesitation or indeterminacy were artificially increased merely to prevent the invasive intervention, the semantics of these coordinates would be reassigned from truth-status or information-status to action-permission.

A Z-number representation could attach a high reliability score to the assessment, for example
\[
Z=(0.94,0.96).
\]
If reliability is kept high, as the evidence warrants, the representation still does not record the difference between the two operational commitment interfaces. If reliability is lowered merely to block the high-stakes intervention, then an action-permission constraint is absorbed into a reliability coordinate, although the evidential reliability itself has not changed.

Such frameworks can emulate the behavior by adding an external policy layer, a utility function, or an application-specific decision rule. What they do not do is preserve the distinction between evidential support and permission natively, inside the membership-like state itself. ALA represents this distinction internally.

\begin{proposition}[Role-accounting limitation of fixed bivariate support--quality representations]
\label{prop:bivariate_insufficiency}
Consider a bivariate representation \(\mathcal{F}(s)=(u(s),v(s))\) whose native roles are fixed: \(u\) represents predicate-directed support and \(v\) represents evidential reliability, hesitation, or contextual adequacy. Suppose two scenarios \(s_1\) and \(s_2\) share the same evidential basis, predicate support, and contextual adequacy, but differ in their permission boundary for operational commitment, for example follow-up versus invasive surgery.

If the representation preserves its native semantics, then it assigns \(\mathcal{F}(s_1)=\mathcal{F}(s_2)\). Consequently, it cannot internally distinguish the two operational uses. If it distinguishes them while retaining only the two fixed native coordinates, then at least one coordinate has been used to encode epistemic restraint rather than its stated role. Thus, this limitation applies to fixed bivariate support--quality representations whose two coordinates are not themselves defined to include restraint. It does not preclude an external policy layer, nor does it address a bivariate model whose second coordinate is explicitly redefined as restraint.
\end{proposition}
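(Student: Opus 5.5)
The plan is to formalize "native semantics" as a functional-dependence condition and then argue by a simple case split, in the same spirit as the pigeonhole argument of Proposition~\ref{thm:semantic-minimality}.

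\emph{Step 1: formalize native-role preservation.} I will say that $\mathcal F$ preserves its native semantics when each coordinate factors through the scenario features belonging to its declared role. Concretely,
\[
u(s)=U\bigl(\pi_{\mathrm{sup}}(s)\bigr),\qquad v(s)=V\bigl(\pi_{\mathrm{qual}}(s)\bigr).
\]
Here $\pi_{\mathrm{sup}}(s)$ extracts the evidential basis and the predicate-directed support. The map $\pi_{\mathrm{qual}}(s)$ extracts evidential reliability, hesitation, or contextual adequacy, according to which native reading of $v$ is fixed. This mirrors Principle~\ref{principle:role-preserving-arithmetic} and the elicitation discipline of Section~\ref{subsec:elicitation}: a coordinate may depend on an input only insofar as that input bears on its own role.

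\emph{Step 2: the two scenarios agree on every native input.} By hypothesis, $s_1$ and $s_2$ share the evidential basis, the predicate support and the contextual adequacy. Evidential reliability and hesitation are functions of the evidential basis, as in the clinical case where $\kappa$ is held fixed. So $\pi_{\mathrm{sup}}(s_1)=\pi_{\mathrm{sup}}(s_2)$ and $\pi_{\mathrm{qual}}(s_1)=\pi_{\mathrm{qual}}(s_2)$. It follows that $u(s_1)=u(s_2)$, $v(s_1)=v(s_2)$, and hence $\mathcal F(s_1)=\mathcal F(s_2)$. Any internal procedure that reads only the state $\mathcal F(s)$ therefore returns the same output in both scenarios, which is the claimed indistinguishability.

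\emph{Step 3: contrapositive.} Suppose instead that $\mathcal F(s_1)\ne\mathcal F(s_2)$ while only $u$ and $v$ are available. Then some coordinate, say $u$, takes different values on $s_1$ and $s_2$. By Step~2, $\pi_{\mathrm{sup}}(s_1)=\pi_{\mathrm{sup}}(s_2)$, so $u$ cannot factor through $\pi_{\mathrm{sup}}$. It must therefore depend on the only feature in which the scenarios differ, namely the permission boundary. That is exactly what it means for $u$ to encode epistemic restraint rather than its stated role, and the same argument applies to $v$. Closing this step means checking that the permission boundary is the sole distinguishing feature, which holds by the construction of the scenarios.

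\emph{Step 4: scope remarks.} An external policy layer acts outside $\mathcal F(s)$, so it is not excluded by the argument. A model whose second coordinate is explicitly redefined as restraint falls outside the hypothesis of fixed native roles, so it is not addressed either.

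The main obstacle is Step~1. The conclusion only has force if "preserves native semantics" is pinned down precisely, and the factorization condition is the commitment that makes Steps~2 and~3 rigorous. I would state this condition explicitly as the reading of the hypothesis. I would also note that reliability and hesitation are assumed to be determined by the shared evidential basis and contextual adequacy, so that no other hidden feature distinguishes $s_1$ from $s_2$.
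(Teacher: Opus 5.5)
Your proposal is correct and follows essentially the same argument as the paper's proof: agreement of the two scenarios on every native input forces $\mathcal{F}(s_1)=\mathcal{F}(s_2)$. Any internal distinction must then come from a coordinate that varies without a change in its native role, and so carries the permission boundary. The only difference is one of explicitness. You spell out, through the factorization $u=U\circ\pi_{\mathrm{sup}}$, $v=V\circ\pi_{\mathrm{qual}}$ and the assumption that reliability and hesitation are fixed by the shared evidential basis, what the paper's proof takes for granted under the phrase \emph{role preservation}.
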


\begin{proof}
By assumption, the predicate-directed support and the evidential or contextual quality are identical across \(s_1\) and \(s_2\). Role preservation therefore gives \(u(s_1)=u(s_2)\) and \(v(s_1)=v(s_2)\). Hence \(\mathcal{F}(s_1)=\mathcal{F}(s_2)\). Any distinction between the two operational uses inside the same bivariate representation requires one of the coordinates to vary despite no change in its native role. The distinction is therefore obtained either by an external policy layer or by using one native coordinate for an operational role that it does not normally carry. ALA avoids this reassignment by preserving epistemic restraint as a primitive coordinate.
\end{proof}

\begin{table}[!htbp]
\centering
\caption{Representations of the support--permission contrast across frameworks.}
\label{tab:case-framework-comparison}
\fontsize{10}{12}\selectfont
\setlength{\tabcolsep}{3pt}
\renewcommand{\arraystretch}{1.14}
\begin{tabular}{>{\raggedright\arraybackslash}p{0.18\textwidth}>{\raggedright\arraybackslash}p{0.34\textwidth}>{\raggedright\arraybackslash}p{0.38\textwidth}}
\toprule
Framework & Native representation & Role-level limitation \\
\midrule
Scalar fuzzy set & \(\mu=0.94\) for both interfaces & Preserves valuation strength but not the permission boundary. \\
Intuitionistic or neutrosophic form & May introduce non-membership, hesitation, or indeterminacy & The case is not primarily weak membership or unresolved truth-status. Their native roles do not distinguish operational permission unless an additional policy layer or role reassignment is introduced. \\
Z-number & \(Z=(0.94,0.96)\), if reliability is kept honest & Reliability remains high in both interfaces; lowering it to block intervention absorbs action-permission into reliability. \\
ALA & \(A_1=(0.94,0.90,0.15)\), \(A_2=(0.94,0.90,0.95)\) & Preserves the same support while distinguishing the operational commitment interface. \\
\bottomrule
\end{tabular}
\end{table}
To display the distinction computationally, consider the following
order-aligned, restraint-sensitive family of scalar interfaces:
\[
\phi_\alpha(A)
=
(1-\alpha)
\bigl(
\eta_\theta\theta_A+\eta_\sigma\sigma_A
\bigr)
+
\alpha(1-\gamma_A),
\]
where
\[
\alpha\in[0,1],
\qquad
\eta_\theta,\eta_\sigma\geq 0,
\qquad
\eta_\theta+\eta_\sigma=1.
\]
Here, \(\alpha\) controls the importance assigned to released
permissiveness \(1-\gamma_A\), while
\(\eta_\theta\) and \(\eta_\sigma\) distribute the remaining weight
between evidential valuation and contextual alignment.

\begin{remark}[Calibration from a commitment threshold]
The restraint weight need not be selected as an arbitrary numerical
preference. Let
\[
A_H=(\theta_H,\sigma_H,\gamma_H)
\]
be a reference state that should not pass a specified high-stakes
commitment threshold \(\tau\). Define
\[
S_H
=
\eta_\theta\theta_H+\eta_\sigma\sigma_H,
\qquad
R_H=1-\gamma_H.
\]
Then
\[
\phi_\alpha(A_H)
=
S_H-\alpha(S_H-R_H).
\]
If \(S_H>R_H\) and
\[
R_H\leq\tau\leq S_H,
\]
the requirement
\[
\phi_\alpha(A_H)\leq\tau
\]
is equivalent to
\[
\alpha
\geq
\frac{S_H-\tau}{S_H-R_H}.
\]
Thus, conditional on an explicitly declared commitment threshold,
the restraint weight can be calibrated rather than chosen
post hoc. The threshold itself remains domain- and policy-dependent.
In deployment, the reference state, commitment threshold, and relative
role weights should be prespecified or calibrated on an independent
reference set rather than selected from the same case that is being
evaluated.
\end{remark}

For the present illustration, take the high-restraint intervention
state
\[
A_I=(0.94,0.90,0.95),
\]
assign equal relative importance to valuation and contextual
alignment,
\[
\eta_\theta=\eta_\sigma=0.50,
\]
and set the high-stakes commitment threshold to
\[
\tau=0.40.
\]
Then
\[
S_H
=
0.50(0.94)+0.50(0.90)
=
0.92,
\qquad
R_H
=
1-0.95
=
0.05,
\]
and therefore
\[
\alpha
\geq
\frac{0.92-0.40}{0.92-0.05}
=
0.5977\ldots.
\]
Choosing \(\alpha=0.60\) yields
\[
(1-\alpha)\eta_\theta
=
(1-\alpha)\eta_\sigma
=
0.20,
\]
and hence the calibrated interface
\[
\phi_w(A)
=
0.20\theta_A
+
0.20\sigma_A
+
0.60(1-\gamma_A).
\]

For the lower-commitment, comparatively reversible follow-up
interface,
\[
A_F=(0.94,0.90,0.15),
\]
one obtains
\[
\phi_w(A_F)
=
0.20(0.94)
+
0.20(0.90)
+
0.60(1-0.15)
=
0.878.
\]
For the high-stakes intervention interface,
\[
A_I=(0.94,0.90,0.95),
\]
one obtains
\[
\phi_w(A_I)
=
0.20(0.94)
+
0.20(0.90)
+
0.60(1-0.95)
=
0.398.
\]

The interpretive valuation, contextual alignment, and auxiliary
confidence level are unchanged across the two interfaces. The scalar
outputs differ solely because the intended commitment interface, and
therefore the associated epistemic restraint, differs. The values
\(0.878\) and \(0.398\) are policy-indexed interface scores, not
probabilities of disease, treatment success, or action permissibility.

ALA does not encode this restriction by weakening the valuation,
degrading contextual alignment, or understating confidence. It
preserves the strong support and records the restriction on premature
operational commitment in a distinct coordinate. This is the
operational content of reasoning before projection:
\[
\text{same support, different commitment interface.}
\]

These scalar values do not themselves prescribe an action. They are
outputs of an explicitly declared interface policy. ALA preserves the
triadic state for comparison, operation, and aggregation, and permits
a scalar readout only afterward. The resulting representation retains
the role-typed source of the difference and is therefore more
diagnostically informative and auditable than a scalar report that
records support alone.


\begin{remark}[Analytical status of the clinical stress test]
\label{rem:case-study-status}
The clinical scenario is a methodological stress test, not an empirical study or a medical recommendation. The structural phenomenon it illustrates was established analytically in Section~\ref{sec:projection-equivalence}: projection-induced equivalence is not, in general, a congruence for the ALA lattice operations. The scenario shows how that proved distinction becomes visible at a high-stakes commitment interface. Empirical studies may later quantify the frequency or magnitude of projection collapse under declared sampling models, but such studies are not required for the existence result proved here.
\end{remark}

\section{Discussion}
\label{sec:discussion}

ALA is a structure-preserving calculus for ambiguous membership-like judgments, and its central distinction is not numerical. ALA separates the value assigned to a predicate, the contextual alignment under which that value is formed, and the restraint governing whether the judgment should be operationalized. This separation prevents the scalar interface from becoming the primitive representation.

Several boundaries follow from this architecture. Context-dependent membership treats context as a condition shaping a reported grade, whereas ALA preserves contextual alignment as part of the output state. Reliability concerns the trustworthiness of information; \(\sigma\) concerns the adequacy of the interpretive setting for the judgment. Confidence strengthens assertion; epistemic restraint limits premature operational commitment. The order-aligned form of restraint is therefore \(1-\gamma\), not \(\gamma\) itself.

The normalized cube \([0,1]^3\) should also not be misread as raw commensurability. The three coordinates share a bounded carrier for order-theoretic and computational reasons, but they do not share the same semantic substance. Projection weights are therefore not arbitrary decorations. They are declared interfaces for a particular task, such as reporting, ranking, screening, or safety-critical decision support.

ALA is operational as well as philosophical. It makes the origin of a scalar grade more accountable by requiring the evaluator or system to preserve the conditions under which the value was produced. A grade such as \(0.80\) is no longer an isolated number. In ALA, it may be accompanied by moderate contextual alignment and strong restraint, indicating that the value should be interpreted cautiously. This is especially important in education, medicine, peer review, social evaluation, and AI-assisted decision systems, where scalar outputs often hide the conditions of their own production.

The same point appears outside clinical reasoning. A scalar grade assigned to a student, for example \(0.80\), may appear to say only that the student belongs strongly to the class of good performers. Yet the grade may have been produced under online instruction, an examination submitted from home, uneven supervision, or a possible evaluator relationship. In ALA, a state such as \(A_{\mathrm{grade}}=(0.80,0.50,0.60)\) does not accuse the evaluator of bias; it records strong valuation, moderate contextual alignment, and non-negligible restraint, and prevents the number from appearing anonymous. The evaluator is placed in a transparent representational setting: the value is visible, but so are the conditions under which the value was produced.

Two further observations place the contribution in perspective. The first concerns the organizing principle. The deepest commitment of ALA is not the triad as such but the discipline that representation must precede projection: a scalar grade is a late, policy-laden, and lossy interface, and reasoning should operate on the role-typed state before that interface is imposed. Stated this way, the principle reaches beyond fuzzy membership, since many systems compress structured judgment into a single number, including credit and risk scores, recommender rankings, and the confidence outputs of automated assessment systems. The natural domain of application is correspondingly the class of high-stakes or irreversible decisions, in medicine, safety-critical autonomy, legal assessment, and content moderation, where the cost of collapsing support into permission is asymmetric. The reversed-polarity restraint coordinate is built for precisely such asymmetric-risk settings, and this is the application class that motivates the framework rather than a single illustrative example.

The clearest philosophical antecedent is the pragmatist analysis of acceptance developed by Isaac Levi: strong evidential support does not by itself compel acceptance into a corpus of full belief, because commitment carries informational value and risk of error \cite{LeviGamblingTruth1967,LeviEnterprise1980}. ALA addresses a related but distinct boundary. Even when a judgment is retained as well supported, its operational use may remain restrained. This lineage motivates the support--permission distinction without serving as a proof of the ALA formalism; the mathematical claims of the paper rest on the definitions, order, and theorems developed above. Rumi's elephant in the dark may be read only as a limited illustration of contextual dependence: disagreement can reflect an impoverished interpretive setting rather than a need to average incompatible verdicts \cite{RumiMathnawi}.

The second observation concerns the boundaries of the present contribution, which are best stated plainly. ALA is a representation layer, not a decision procedure: it preserves the roles of a judgment and supports operations on them, but it does not prescribe which action to take. It does not detect dishonesty, bias, or conflict of interest, and no numerical representation can guarantee truthful elicitation; the framework only ensures that the conditions under which a value is used are exposed rather than hidden. It does not supply a universal rule for obtaining \(\theta\), \(\sigma\), and \(\gamma\) in practice. Section~\ref{subsec:elicitation} provides an admissibility discipline and one illustrative route, but domain-specific identification, validation, and sensitivity analysis remain open tasks. These are limits of scope rather than defects.

Thus, the present paper is foundational, and the program it opens is broader than the results reported here. It fixes the normalized real-valued realization, the cognitive order, the projection discipline, the non-collapse result, the arithmetic core, the aggregation envelope, and the support--permission case. Subsequent work may develop an axiomatic theory of ALA elicitation, distance and similarity measures, traceability profiles, ALA-based decision making, information measures, and connections with probability and optimization, together with dynamics in which evidence moves valuation and contextual alignment while restraint responds to stakes rather than to evidence.

The crisp--fuzzy distinction concerns the representation selected inside a role; ALA concerns the architecture that separates the roles. These are orthogonal modeling questions. The typed product \(L_\theta\times L_\sigma\times L_\gamma^{\mathrm{op}}\) therefore admits crisp, graded, structured, and mixed-carrier realizations. Even a fully crisp profile may preserve a nontrivial separation between a categorical valuation, categorical contextual admissibility, and a categorical prohibition on operational use. ALA is consequently not exhausted by the representation of vagueness or uncertainty. It is more broadly a role-typed calculus of evaluative judgment, of which fuzzy and uncertainty-oriented applications form an important but non-exclusive class.

The full carrier-independent theory remains outside the present scope. It requires, for each proposed role carrier, declared native orders and bounds, role-preserving operations, standardization maps, and proofs of compatibility with the global ALA policy. Questions of commutation with native operations, generalized projection, non-collapse, and aggregation envelopes must be resolved carrier by carrier. The unit-cube calculus developed here is the first complete realization of ALA and a standard interface for such extensions; it is not a proof that every structured uncertainty formalism already supplies an admissible ALA carrier.

\section{Conclusion}
\label{sec:conclusion}

This paper introduced the Awareness Logic of Ambiguity (ALA) as a formal framework for preserving the internal structure of membership-like judgment before scalar projection. The starting point was the support--permission separation: a judgment may be strongly supported, contextually aligned, and held with high confidence, while still being withheld from immediate operational commitment. This distinction is not a peripheral decision-theoretic concern, but a structural feature of reasoning under ambiguity. ALA formalizes it by representing an evaluative act as a triadic interpretive state whose coordinates preserve valuation, contextual alignment, and epistemic restraint as distinct roles.

The formal development showed that these roles are not merely three numerical decorations attached to a scalar membership grade. They determine the geometry and algebra of the framework. The reversed polarity of epistemic restraint induces a cognitive order in which stronger valuation and stronger contextual alignment increase permissiveness, while stronger restraint decreases it. This order yields a bounded distributive lattice, a role-preserving complement, and admissible scalar projections that are delayed, declared, and order-consistent. The same order-theoretic skeleton extends to the typed product \(L_\theta\times L_\sigma\times L_\gamma^{\mathrm{op}}\) whenever the native carriers are bounded distributive lattices, while the unit cube supplies the complete normalized realization developed here. The non-collapse theorem establishes the central structural result: equality after projection does not imply equality of ALA states, and projection-induced scalar equality may fracture under an ALA lattice operation.

The paper also justified the conditional semantic minimality of the triad under role-transparency and independent-variability assumptions. Removing valuation, contextual alignment, or epistemic restraint collapses a distinct role of judgment. A scalar grade cannot, by itself, preserve whether weakness comes from low predicate support, weak contextual alignment, or high restraint toward operational commitment. The arithmetic and aggregation layers developed here therefore act directly on ALA states rather than on already-compressed scalar outputs. ALA thereby relocates scalar membership to its proper place: an interface imposed after the semantic source of the judgment has been preserved.

The clinical stress test illustrates the operational meaning of this architecture. The same evidential basis, contextual alignment, and auxiliary confidence can be represented with lower restraint toward a comparatively reversible follow-up interface and higher restraint toward an invasive intervention interface. The difference is not a change in the diagnostic predicate or a reduction in confidence; it is a change in the operational commitment interface, captured by epistemic restraint. Any final permission or prohibition remains a downstream policy conclusion. The principle ``knowing is not permission'' thereby acquires formal content: supported judgment and permission to act are related, but they are not identical.

The present paper is foundational rather than exhaustive. Supplementary diagnostics on role-preserving admissible elicitation, projection behavior, and coordinate deletion are collected in Appendix~\hyperref[app:diagnostic-details]{A}. Several extensions remain open. Future work may develop distances, similarities, projection-ambiguity measures, traceability profiles, and order-violation diagnostics for comparing ALA states before and after projection, together with residual operations, information measures, and application-specific fusion rules. Future work may also develop ALA-based decision making, probability, statistics, optimization, and AI-assisted interpretive systems.

A dedicated line of research is opened by the carrier-independent architecture. Future work may construct role-local calculi for specific bounded ordered carriers, characterize admissible rolewise standardizations \(\mathcal R\), distinguish standardization loss from cross-role projection loss, determine when native processing commutes with standardization, and establish carrier-specific preservation, aggregation-envelope, and non-collapse results. Each extension must remain role preserving: internal qualifications may enrich a role value without being identified with a different ALA role, and the global permissive--conservative policy must be proved rather than assumed for every native calculus and declared interface.

The central contribution of ALA is therefore not the addition of more numerical components to fuzzy membership. Nor is ALA confined to uncertain or fuzzy judgments: crispness, fuzziness, interval structure, reliability qualification, and other representational forms belong inside the selected role carriers, whereas ALA governs the architecture that keeps the roles distinct. It preserves what makes a judgment interpretable before that judgment is standardized and then compressed into a scalar report. In environments where context, responsibility, and operational consequence matter, scalar agreement is not the end of meaning. The unit-cube calculus is the first complete realization of this architecture, not its final boundary.

\section*{Acknowledgments}

The author thanks the anonymous reviewers for their constructive comments, which improved the clarity and presentation of this work, and the colleagues and friends who discussed its early conceptual ideas and encouraged its further development.

Above all, this work is lovingly dedicated to the author's daughter, whose name inspired the acronym of this framework, and to her mother, whose presence quietly inhabits its motivating example. ALA is presented here as a formal scientific framework, a theory of why a judgment should not be reduced to a single number before its meaning is understood. Its origin, however, is simpler and more personal: the framework was shaped not only by mathematical curiosity, but by a deeply human appreciation for awareness, interpretation, and meaning, the very qualities that, for its author, were never scalars to begin with.

\section*{Author Contributions}
\textbf{S. A. Edalatpanah:}
Conceptualization; Methodology; Formal analysis;
Investigation; Writing -- original draft;
Writing -- review \& editing.

\section*{Funding}
This research did not receive any specific grant from funding agencies in the public, commercial, or not-for-profit sectors.

\section*{Data Availability}
No datasets were generated or analyzed during the current study.

\section*{Conflicts of Interest}
The author declares a potential non-financial competing interest due to his editorial and/or managerial affiliation with the journal and its publishing ecosystem. Given this role, the author had no involvement in the peer-review of this article and had no access to information regarding its peer review. Full responsibility for the editorial process, including reviewer selection, editorial evaluation, revision assessment, and the final publication decision, was delegated to an independent journal editor. The author declares no relevant financial competing interests.

\section*{Use of AI Tools}
During the multi-year preparation of this work, the author used several generative artificial-intelligence tools, in the form of large-language-model assistants, to improve the readability, fluency, and linguistic accuracy of the manuscript and to assist in checking the internal consistency of definitions, statements, and cross-references. All conceptual contributions, formal results, and proofs are the author's own. After using these tools, the author carefully reviewed and edited the content and takes full responsibility for the accuracy, integrity, originality, and final content of the publication.

\begingroup
\fontfamily{ppl}\fontsize{9}{10.35}\selectfont
\raggedright

\endgroup

\phantomsection
\section*{Appendix A. Additional diagnostic details}
\label{app:diagnostic-details}

\setcounter{equation}{0}
\setcounter{table}{0}
\renewcommand{\theequation}{A\arabic{equation}}
\renewcommand{\thetable}{A\arabic{table}}
\renewcommand{\theHequation}{appendix.A.\arabic{equation}}
\renewcommand{\theHtable}{appendix.A.\arabic{table}}

This appendix records three supplementary diagnostics that support the main article but are not needed for the core flow of the paper. They are included to clarify how role separation may be checked, how projection can be diagnosed, and what is lost when one coordinate is deleted.

\subsection*{A.1 Role-separated elicitation tests}

ALA treats elicitation as admissible only when it respects the semantic roles of the three coordinates. The following diagnostic principle is not an additional coordinate definition but a practical test for whether contextual weakness has been silently absorbed into valuation.

Holding the content-directed evidence profile, concept, and evaluator perspective fixed, a degradation attributable solely to the adequacy of the interpretive conditions should not be silently recoded as a change in primitive valuation. It should primarily lower contextual alignment:
\begin{equation}
\label{eq:app-context-perturbation}
\begin{gathered}
 z_A^\theta,A,p \ \hbox{fixed},\qquad c' \ \hbox{a degraded interpretive context relative to } c,\\
 \theta_A \ \hbox{unchanged under the stipulated perturbation},\qquad
 \sigma_A(x,c',p) \leq \sigma_A(x,c,p).
\end{gathered}
\end{equation}
This is a conditional diagnostic test, not a claim that every observed degradation leaves every empirically estimated valuation unchanged. If the degraded channel also changes the content-directed evidence available to the evaluator, then both \(\theta\) and \(\sigma\) may legitimately change. Conversely, if the interpretive channel is held fixed while the object--concept match is weakened, the valuation coordinate should change. The test blocks only the circular move in which a purely contextual inadequacy is hidden inside \(\theta\) and then used to argue that \(\sigma\) is redundant.

Thus, a clean and a degraded imaging channel may differ primarily in contextual alignment when the content-directed profile is stipulated as fixed; when the degradation alters that profile as well, the general dependence \(\theta_A(x,c,p)\) should be used rather than forcing invariance.

\subsection*{A.2 Projection diagnostics}

A single scalar projection is intentionally lossy, but a declared family of projections can test whether two ALA states are distinguishable through all admissible scalar interfaces of a given class. For the positive linear order-aligned family, define
\begin{equation}
\label{eq:app-positive-linear-family}
\Phi_{\mathrm{lin}}
=
\left\{
\phi_w(A)=w_\theta\theta_A+w_\sigma\sigma_A+w_\rho(1-\gamma_A)
:\
(w_\theta,w_\sigma,w_\rho)\in\Delta^2_{+}
\right\},
\end{equation}
where \(\Delta^2_{+}\) is the relative interior of the probability simplex. If two states have the same value under every projection in \(\Phi_{\mathrm{lin}}\), then they are identical as ALA states. Indeed, equality for all positive weights gives
\begin{equation}
\label{eq:app-linear-separation}
 w_\theta(\theta_A-\theta_B)
+w_\sigma(\sigma_A-\sigma_B)
+w_\rho\bigl((1-\gamma_A)-(1-\gamma_B)\bigr)=0
\end{equation}
for every \(w\in\Delta^2_{+}\). Perturbing the balanced weight vector inside the simplex forces all three coordinate differences in Eq.~\eqref{eq:app-linear-separation} to vanish. Thus, universal indiscernibility under this family is equivalent to exact equality of ALA states.

For a fixed pair of states, the positive linear projections that equalize them are given by
\begin{equation}
\label{eq:app-equalizing-set}
\Omega(A,B)=
\left\{
 w\in\Delta^2_{+}:
 w_\theta\Delta_\theta+w_\sigma\Delta_\sigma+w_\rho\Delta_\rho=0
\right\},
\end{equation}
where \(\Delta_\theta=\theta_A-\theta_B\), \(\Delta_\sigma=\sigma_A-\sigma_B\), and \(\Delta_\rho=(1-\gamma_A)-(1-\gamma_B)\). This set is a diagnostic object, not a ranking rule. For distinct states, \(\Omega(A,B)\) is nonempty exactly when the pair is incomparable in the cognitive order. If one state strictly dominates the other, every positive order-aligned projection preserves the strict inequality and no equalizing weight is possible.

\subsection*{A.3 Coordinate deletion and role collapse}

Deleting a coordinate does not merely reduce numerical resolution. It deletes a typed role from the evaluative act. Table~\ref{tab:app-coordinate-deletion} summarizes the corresponding collapses.

\begin{center}
\captionsetup{type=table,hypcap=false}
\caption{Coordinate deletion and role collapse.}
\label{tab:app-coordinate-deletion}
\fontsize{9}{10.8}\selectfont
\renewcommand{\arraystretch}{1.0}
\setlength{\tabcolsep}{4pt}
\begin{tabular}{>{\raggedright\arraybackslash}p{2.3cm}>{\raggedright\arraybackslash}p{2.7cm}>{\raggedright\arraybackslash}p{3.0cm}>{\raggedright\arraybackslash}p{6.0cm}}
\hline
\textbf{Deleted coordinate} &
\textbf{Remaining representation} &
\textbf{Collapsed role} &
\textbf{Consequence} \\
\hline
\(\theta\) & \((\sigma,\gamma)\) & Interpretive valuation & The framework can record contextual fit and restraint, but not the degree of predicate-directed valuation being assessed. \\
\(\sigma\) & \((\theta,\gamma)\) & Contextual alignment & Judgments made under well-aligned and poorly aligned interpretive conditions may become indistinguishable. \\
\(\gamma\) & \((\theta,\sigma)\) & Epistemic restraint & Lower-commitment and higher-commitment interfaces with the same support profile may collapse into the same state. \\
Scalar projection \(\phi(A)\) & One reported scalar & At least two role directions & Distinct triadic states may become projection-equivalent even though later operations can separate them. \\
\hline
\end{tabular}
\end{center}

The table explains why a pure valuation readout is not a strictly admissible projection of the whole ALA state. It may be reported descriptively, but it deletes contextual alignment and epistemic restraint at the scalar interface. ALA instead keeps the three roles visible until the projection or decision interface has been declared.

\end{document}